\makeatletter \@addtoreset{equation}{section} \makeatother
\renewcommand\thetable{\thesection.\@arabic\c@table}
\theoremstyle{plain}
\newtheorem{maintheorem}{Theorem}
\newtheorem{mainlemma}{Lemma}
\newtheorem{theorem}{Theorem}[section]
\newtheorem{lemma}{Lemma}[section]
\newtheorem{definition}{Definition}[section]
\newtheorem{Thm}{Theorem}[section]
\newtheorem{Lem}[Thm]{Lemma}
\newtheorem{Prop}[Thm]{Proposition}
\newtheorem{Cor}[Thm]{Corollary}
\theoremstyle{remark}
\newtheorem{Def}[Thm] {Definition}
\newtheorem{Rem}[Thm] {Remark}
\newcommand{\htop}{h_{\topp}}
\newcommand{\eps}{\varepsilon}
\newcommand{\N}{{\mathbb{N}}}
\newcommand{\set}[1]{\left\{#1\right\}}
\long\def\begcom#1\endcom{}
\newcommand{\orb}{\operatorname{orb}}
\newcommand{\length}{\operatorname{\length}}
\def\length{\operatorname{length}}
\newcommand{\bl} {\begin{lemma}}
\newcommand{\el} {\end{lemma}}
\newcommand{\bt} {\begin{theorem}}
\newcommand{\et} {\end{theorem}}
\newcommand{\bp}{\begin{proof}}
\newcommand{\ep}{\end{proof}}
\newcommand  {\ee} {\end{equation}}
\newcommand  {\beq} {\begin{eqnarray*}}
\newcommand  {\eeq} {\end{eqnarray*}}
\newcommand  {\bd} {\begin{definition}}
\newcommand  {\ed} {\end{definition}}
\newcommand{\diam}{\operatorname{diam}}
\newcommand{\topp}{\operatorname{top}}
\def\ep{\noindent{\hfill $\Box$}}
\begin{document}

\title{Strongly Distributional Chaos in the Sets of Twelve Different Types of Non-recurrent Points }

\author{An Chen, Xiaobo Hou, Wanshan Lin and Xueting Tian}
\address{An Chen, School of Mathematical Sciences,  Fudan University\\Shanghai 200433, People's Republic of China}
\email{15210180001@fudan.edu.cn}

\address{Xiaobo Hou, School of Mathematical Sciences,  Fudan University\\Shanghai 200433, People's Republic of China}
\email{20110180003@fudan.edu.cn}

\address{Wanshan Lin, School of Mathematical Sciences,  Fudan University\\Shanghai 200433, People's Republic of China}
\email{21110180014@m.fudan.edu.cn}

\address{Xueting Tian, School of Mathematical Sciences,  Fudan University\\Shanghai 200433, People's Republic of China}
\email{xuetingtian@fudan.edu.cn}


\begin{abstract}

   In present paper we mainly focus on non-recurrent dynamical orbits with empty syndetic center and show that twelve  different statistical structures over mixing  expanding maps or transitive Anosov  diffeomorphisms all have dynamical complexity in the sense of strongly distributional chaos.

\end{abstract}

\keywords{Birkhoff Ergodic average; Non-recurrent Points; Non-dense orbits; Shadowing property; Strongly distributional chaotic; Statistical $\omega$-limit set; Multifractal analysis.}

\subjclass[2020] {  37B10;  37B20; 37B65; 37D20;  37D25.   }

\maketitle
\section{Introduction}


The study of the thermodynamic formalism and multifractal
analysis for maps with some hyperbolicity has drawn the attention of many researchers
from the theoretical physics and mathematics communities in the last
decades. The general concept of multifractal analysis (or dimension theory), that can be traced back to
Besicovitch, is to decompose the phase space in (invariant) subsets of points which have a
similar dynamical behavior and to describe the size of each of such subsets from
the geometrical or topological viewpoint by using the concepts of Hausdorff dimension, topological entropy or pressure and Lebesgue measure etc.

Birkhoff ergodic theorem is one classical and basic way to study dynamical orbits by describing asymptotic behavior from the probabilistic viewpoint of a given observable function. Some concepts in the theory of multifractal analysis, for example, irregular set and level set, derive from the Birkhoff ergodic theorem. As for characterizing the complexity of these sets, there are some prevalent indexes. For example, Pesin and Pitskel \cite{Pesin-Pitskel1984} are the first to notice the phenomenon of the irregular set carrying full topological entropy in the case of the full shift on two symbols. Since then, there are lots of advanced results to show that the irregular points can carry full entropy(and topological pressure) in symbolic systems, hyperbolic systems and systems with specification-like or shadowing-like properties \cite{Barreira-Schmeling2000,BPS,  Olsen2002, Olsen2003,Olsen-Winter,  Pesin1997,  FFW,CKS,TDbeta,DOT,Thompson2009,Thompson2008}. In addition, Lebesgue measure \cite{Takens,KS}, Hausdorff dimension \cite{Pesin-Pitskel1984,Barreira-Schmeling2000,  Pesin1997, CKS,TDbeta,DOT,Bar2011,TV vp,FFW} and chaos \cite{CT} are all used afterwards to describe how complex the irregular set and level set can be. We refer to \cite{Fal,Pesin1997,BaL} for more contents of fractals and dimension theory.

Another way to differ asymptotic behavior of dynamical orbits is from the perspective of periodic-like recurrence. There are many  such concepts, for example, periodic points, almost periodic points, weakly almost periodic points,  transitive points etc \cite{Sig,Kal,Katok,Gottschalk46,Grillenberger,Gottschalk44-2222,Gottschalk44,Rees,Zhou93,T16}.
For periodic points, it is well-known that  the exponential growth of the periodic points equals to topological entropy for hyperbolic systems but Kaloshin showed that in general periodic points can grow much faster than entropy \cite{Kal}. Moreover,
it is well known that for $C^1$ generic diffeomorphisms, all periodic points are hyperbolic so that countable and they form a
dense subset of the non-wandering set (by  classical  Kupka-Smale theorem, Pugh's  or Ma\~{n}$\acute{\text{e}}$'s
ergodic closing lemma from   smooth ergodic theory, for example,  see  \cite{Kupka1,Smale1,Pugh1,Pugh2,Mane1}).
However, periodic point does not exist naturally. For example there is no periodic points in any irrational rotation.
Almost periodic point is a good generalization which exists naturally  since it is equivalent that it belongs to a minimal set (see \cite{Birkhoff,Gottschalk44-2222,Gottschalk44,Gottschalk46,Mai}) and by Zorn's lemma  any dynamical system contains at least one
minimal invariant subset.  There are many examples of subshifts which are strictly ergodic (so that every point in the subshift is almost periodic)
and has positive entropy, for example, see \cite{Grillenberger}. In other words, almost periodic points have strong dynamical complexity.
For weakly almost periodic points, quasi-weakly almost periodic points and Banach-recurrent points, it is shown recently that all their gap-sets with same asymptotic behavior carry high dynamical complexity in the sense of full topological entropy over dynamical system with specification-like property and expansiveness \cite{HTW,T16}. In addition to studying the entropy of periodic-like recurrent point sets, authors in \cite{CT} consider their complexity from the perspective of chaos. The results of \cite{HTW,T16,CT} are all restricted on studying transitive points (or dense orbits).

In contract to transitive points, non-dense points (i.e., points that are not transitive)
have been studied a lot. For example, non-dense points (i.e., points that are not transitive)
form a set with full Hausdorff dimesion for hyperbolic systems, see \cite{Urbanski non-dense}. An effective
tool to show full Hausdorff dimension is Schmidt's game, which was first introduced by
Schmidt in \cite{Schmidt1966}. A winning set for such games is large in the following sense: it is dense in
the metric space, and its intersection with any nonempty open subset has full Hausdorff
dimension when the metric space is $R^n$ or a manifold. In \cite{Schmidt1966} Schmidt showed that the set
of badly approximable numbers is winning for Schmidt's game and hence has full Hausdorff
dimension 1. There are many applications of Schmidt's game in homogeneous dynamics
due to the well known connection between Diophantine approximation and bounded orbits
of flows (for example, see \cite{Dani1985,Dani1986,Dol-Anosov,Kleinbock-Weiss2010,Wu2016-schmidt} and references therein). Recently it was
showed in \cite{DT} that for hyperbolic or expanding systems the set of non-dense points
carries full topological entropy by using a way different with Schmidt's game. Moreover,
this set may contain eighteen different types for which twelve fractals correspond to nonrecurrent
points and other six fractals correspond to recurrent (but not transitive) points, and every one of eighteen fractals has full topological entropy even though the twelve different fractals corresponding to non-recurrent points always have totally zero measure
(since the set of recurrent points has totally full measure). In present paper, we adopt this
new pattern of classification of non-recurrent points and show that twelve different fractals
of non-recurrent points all are strongly distributional chaoic.

\subsection{Non-recurrent Points and Statistical $\omega$-limit Sets}
Let $(X,d)$ be a nondegenerate $($i.e,
with at least two points$)$ compact metric space, and $f:X \rightarrow X$ be a continuous map. Such $(X,f)$ is called a dynamical system. 
Given a dynamical system $(X,f).$ Let $\mathfrak{C}=\langle x_n\rangle_{n=1}^{+\infty}$, we define $\omega(\mathfrak{C}):=\bigcap_{n\geq 1}\overline{\bigcup_{k\geq n}\{x_k\}}$. For any $x \in X$, the orbit of $x$ is the sequence $\langle x, fx, f^2x \cdots\rangle$, which we denote by $orb(x,f)$. We call $\omega(orb(x,f))$ the $\omega$-$limit$ set of $x$, written as $\omega_f(x)$ briefly. A point $x \in X$ is $recurrent$, if $x \in \omega_f(x)$. We denote all recurrent points of $X$ by $Rec$ or $Rec(f)$. We call $X\setminus Rec$ the non-recurrent point set, which we denote by $NR$ or $NR(f)$. A point $x$ is transitive if $\omega_f(x)=X$. It is equivalent to $orb(x, f) = X$ if the system is surjective. Let $Tran, ND$ denote the set of transitive points and non-transitive points. Note that
$$ND=NR\sqcup(Rec\setminus Tran).$$

If for every pair of nonempty open sets $U,$ $V$ there is an positive integer $n$ such that $f^n(U)\cap V\neq \emptyset$ then we call $(X,  f)$ \textit{topologically transitive}.
Furthermore,   if for every pair of nonempty open sets $U,$ $V$ there exists an positive integer $N$ such that $f^n(U)\cap V\neq \emptyset$ for every $n>N$,   then we call $(X,  f)$ \textit{topologically mixing}. 
\begin{maintheorem}\label{thm-NR-new}
Suppose $(X,  f)$ is a mixing expanding map or a transitive Anosov diffeomorphism on a compact  manifold. Then the set of non-recurrent set $NR(f)$ is strongly distributional chaotic. 
\end{maintheorem}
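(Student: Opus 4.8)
The plan is to exhibit an uncountable (indeed perfect) distributionally scrambled set $S\subseteq NR(f)$ by building its points one orbit at a time, using the specification property that these systems enjoy (mixing expanding maps have it by shadowing plus mixing, and a transitive Anosov diffeomorphism is automatically mixing, so the same applies). Throughout, for $x\ne y$ write $\overline{F}_{xy}(t)=\limsup_{n}\tfrac1n\#\{0\le i<n: d(f^ix,f^iy)<t\}$ and $\underline{F}_{xy}(t)=\liminf_{n}\tfrac1n\#\{0\le i<n: d(f^ix,f^iy)<t\}$. I want every pair in $S$ to satisfy $\overline{F}_{xy}(t)=1$ for all $t>0$ and $\underline{F}_{xy}(s)=0$ for a fixed $s>0$, with each $z\in S$ non-recurrent.

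I would first fix the reference data as \emph{periodic} orbits, which simultaneously buys the separation constant and an easy non-recurrence argument. Choose three periodic orbits $O_0,O_1,O_2$ with $O_1\cap O_2=\emptyset$, and set $s_0:=\dist(O_1,O_2)>0$; taking $O_1,O_2$ far apart makes $s_0$ as large as the system allows, which is the source of the ``strong'' separation constant. Since $f^iu\in O_1$ and $f^iv\in O_2$ for all $i$ whenever $u\in O_1,v\in O_2$, any two orbit segments, one shadowing $O_1$ and one shadowing $O_2$, stay at distance $\ge s_0-2\epsilon$ at \emph{every} time of the segment; this is exactly what will drive the lower distribution function to $0$. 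Finally fix a base point $p_0$ lying off the finite set $\Phi$ consisting of $O_0\cup O_1\cup O_2$ together with the finitely many bounded connecting segments supplied by specification, and choose $\epsilon$ with $2\epsilon<\dist(p_0,\Phi)$.

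Now the construction. Index candidate points by $\xi\in\{0,1\}^{\mathbb N}$, passed first through a spreading map $\eta$ (repeating ever longer prefixes) so that $\xi\ne\xi'$ forces $\eta(\xi),\eta(\xi')$ to disagree at infinitely many coordinates. The itinerary of $z_\xi$ is, after a length-one lead-in at $p_0$, a concatenation of alternating blocks $C_1,B_1,C_2,B_2,\dots$ glued by specification gaps: on the coherence block $C_k$ every $z_\xi$ is made to follow the \emph{same} segment of $O_0$ (forced to agree symbolically on $C_k$ in the Markov coding, so the tracing error there tends to $0$ as $k\to\infty$), while on the separation block $B_k$ the point $z_\xi$ follows $O_1$ or $O_2$ according to the $k$-th coordinate of $\eta(\xi)$. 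I choose block lengths so fast-growing that each block dominates the total length of all preceding blocks ($|C_1|\ll|B_1|\ll|C_2|\ll\cdots$); the gaps are bounded, hence of density $0$, and I keep all segments and gaps within $\epsilon$ of $\Phi$. The limit of the resulting periodic approximants (equivalently, the shadowing point of the infinite pseudo-orbit) is a genuine point $z_\xi$, and $\xi\mapsto z_\xi$ is continuous and injective, so $S=\{z_\xi\}$ is a Cantor set. Reading off the properties is then routine: for $n\ge1$ the orbit of $z_\xi$ stays within $\epsilon$ of $\Phi$ while $z_\xi$ lies within $\epsilon$ of $p_0$, so $d(f^nz_\xi,z_\xi)\ge\epsilon$ for all large $n$ and $z_\xi\in NR(f)$; at the end of each dominating $C_k$ all coordinates are within a vanishing error, giving $\overline{F}_{z_\xi z_{\xi'}}(t)=1$ for every $t>0$; and if $\xi\ne\xi'$ they disagree on infinitely many $k$, so at the end of each such dominating $B_k$ one point shadows $O_1$ and the other $O_2$ throughout $B_k$, pushing the density of times with $d\ge s_0-2\epsilon$ up to $1$ and giving $\underline{F}_{z_\xi z_{\xi'}}(s_0-2\epsilon)=0$.

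The main obstacle is not non-recurrence — confining the tail to the $\epsilon$-neighbourhood of the finite set $\Phi$ makes that step essentially free once periodic orbits are used — but rather the two ``for all pairs / all thresholds'' quantifiers. Concretely, I must ensure that \emph{every} distinct pair (not just a generic one) is separated on a block that \emph{dominates}, which is precisely what the spreading map $\eta$ (infinitely many disagreements) combined with alternating domination delivers; and I must obtain $\overline{F}\equiv1$ on all of $(0,\infty)$ rather than only above the tracing error, which forces the coherence blocks to be matched in the symbolic/Markov coding so that the tracing error there genuinely vanishes while the separation blocks remain freely codable. I expect the bookkeeping that keeps these two requirements compatible — driving the coherence error to $0$ uniformly in the pair while independently coding separation, all inside a single nested specification/shadowing construction — to be the delicate technical core.
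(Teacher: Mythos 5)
Your construction, as written, establishes that $NR(f)$ contains an uncountable DC1-scrambled set: you control $\overline{F}_{xy}(t)=\limsup_n\frac1n\#\{i<n:d(f^ix,f^iy)<t\}$ and $\underline{F}_{xy}(s)$. But the theorem asserts \emph{strongly} distributional chaos, which in this paper is a strictly stronger property: for \emph{every} gauge $\alpha\in\mathcal{A}$ (nondecreasing, $\alpha(n)\to\infty$, $\alpha(n)/n\to 0$) one must produce an uncountable set on which every pair satisfies $\Phi^*_{xy}(t,f,\alpha)=1$ for all $t>0$, i.e.\ the set of times $i$ with $\sum_{j=0}^{i-1}d(f^jx,f^jy)<\alpha(i)\,t$ has upper density one. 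This is a condition on the \emph{cumulative} sum of distances, not on the pointwise distance, and your argument never engages with it. On a coherence block of length $L$ your tracing error $\varepsilon_k$ merely tends to $0$, so the accumulated distance over that block is only bounded by $\varepsilon_k L$, which can still dwarf $\alpha(i)t$ since $\alpha$ may grow arbitrarily slowly; plain shadowing or $s$-limit-shadowing gives no rate for $\varepsilon_k$, so you cannot force $\varepsilon_k L$ to stay bounded. This is precisely why the paper invokes the \emph{exponential} shadowing property (available for expanding maps and Anosov diffeomorphisms): matching the two pseudo-orbits on a coherence block makes the accumulated distance over that block at most $\frac{4\varepsilon_1}{1-e^{-\lambda}}$, a constant independent of the block length. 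Even then the contribution of the preceding separation blocks is linear in their total length $P_k$, so one must wait until an $\alpha$-dependent time $c_k$ with $\alpha(P_k+c_k)t> P_k\cdot\diam X+\frac{4\varepsilon_1}{1-e^{-\lambda}}$ before the inequality can hold, and then choose the next block so long that the indices beyond $c_k$ still have density close to one. Consequently the scrambled set must be built \emph{after} $\alpha$ is given (the paper's $S^\alpha$ depends on $\alpha$); your single $\alpha$-independent construction cannot supply this.

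Two secondary points. First, your non-recurrence step leans on keeping the orbit ``within $\epsilon$ of $\Phi$'' during the specification gaps, but bare specification does not control the orbit during gaps; you need to realize the transitions as $\varepsilon$-chains inside a fixed proper closed invariant set $\Lambda$ and shadow the resulting pseudo-orbit, so that $\omega_f(z_\xi)\subseteq\Lambda$ while $z_\xi\notin\Lambda$ — this is how the paper gets $S\subseteq NR$ (Lemma \ref{DC1-omega-Lemma}, via Lemma \ref{strong-elementary-entropy-dense}). Second, your remaining mechanism — coherence blocks on a common reference orbit alternating with separation blocks following two distal reference orbits, with each block dominating all its predecessors and a spreading map guaranteeing infinitely many disagreements for every distinct pair — is essentially the same skeleton as the paper's proof of Lemma \ref{DC1-omega-Lemma}(a), so with the exponential-shadowing refinement and the $\alpha$-dependent choice of block lengths your outline would converge to the paper's argument.
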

Strongly distributional chaos is a kind of chaos which is stronger than usual distributional chaos and Li-Yorke chaos. See their definitions in section \ref{subsection-chaos}. 

Since $NR(f)$ is contained in the set of non-dense points $ND(f)$, then one also has that $ND$  is strongly distributional chaotic. We will also show that $Rec(f)\setminus Tran(f)$ is distributional chaoic of type 1 in section \ref{recnotdense}.
Since $NR(f)$ has zero measure for any invariant measure so that we have a following
corollary.

\begin{Cor}
Suppose $(X,  f)$ is a mixing expanding map or a transitive Anosov diffeomorphism on a compact manifold.  Then there is set $S\subseteq X$ which is strongly distributional chaotic  such that for any $\mu\in\mathcal M_f(X)$, $\mu(S)=0$.
\end{Cor}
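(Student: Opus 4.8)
The plan is to read the corollary off Theorem~\ref{thm-NR-new} together with the Poincar\'e recurrence theorem, so that essentially no new construction is required beyond bookkeeping. First I would let $S$ be the uncountable strongly distributionally scrambled set furnished by the proof of Theorem~\ref{thm-NR-new}. Since that theorem establishes strong distributional chaos \emph{inside} $NR(f)$, this $S$ may be taken with $S \subseteq NR(f)$, and it already witnesses strong distributional chaos by its very construction; hence the first half of the claim is immediate and the only thing left to verify is that $S$ is null for every invariant measure.

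The remaining task is purely measure-theoretic. I would invoke the Poincar\'e recurrence theorem: for every $\mu \in \mathcal{M}_f(X)$, $\mu$-almost every point of $X$ is recurrent, so that $\mu(Rec(f)) = 1$ and therefore
\[
\mu(NR(f)) = \mu\bigl(X \setminus Rec(f)\bigr) = 0 .
\]
By monotonicity of measure, $\mu(S) \le \mu(NR(f)) = 0$ for every $f$-invariant $\mu$, which is exactly the assertion. This is also the reasoning already flagged in the sentence preceding the corollary.

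The one point that genuinely deserves attention is the containment convention: one must confirm that the phrase ``$NR(f)$ is strongly distributional chaotic'' in Theorem~\ref{thm-NR-new} is witnessed by a scrambled set \emph{contained} in $NR(f)$, rather than merely meeting it, since otherwise the measure estimate above would not apply to $S$. This is precisely how the scrambled set is produced in the proof of Theorem~\ref{thm-NR-new}, so there is no serious obstacle here: the entire difficulty of the result is absorbed into Theorem~\ref{thm-NR-new}, and the corollary is a one-line deduction from Poincar\'e recurrence.
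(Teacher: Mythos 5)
Your argument is exactly the paper's: Theorem~\ref{thm-NR-new} supplies strong distributional chaos inside $NR(f)$, and the Poincar\'e recurrence theorem gives $\mu(NR(f))=0$ for every $\mu\in\mathcal M_f(X)$, which is precisely the remark the paper makes just before stating the corollary. The only cosmetic point is that strong distributional chaos demands an uncountable $\alpha$-DC1-scrambled set for \emph{every} $\alpha\in\mathcal{A}$, so there is no single scrambled set to serve as $S$; the cleanest choice is $S=NR(f)$ itself, which inherits both the chaos property and the null-measure property.
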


The set of non-recurrent points $NR$ also can be decomposed into many  different levels by using different asymptotic behavior. One natural question is that
$$\text{\it Which layer does strongly distributional chaos occur in?}$$  Here we use some concepts from \cite{DT} 
 to describe different statistical structure of dynamical orbits.

\begin{Def}
Let $S\subseteq \mathbb{N}$,   define
$$\bar{d} (S):=\limsup_{n\rightarrow\infty}\frac{|S\cap \{0,  1,  \cdots,  n-1\}|}n,  \,  \,   \underline{d} (S):=\liminf_{n\rightarrow\infty}\frac{|S\cap \{0,  1,  \cdots,  n-1\}|}n,  $$
where $|Y|$ denotes the cardinality of the set $Y$.   These two concepts are called {\it upper density} and {\it lower density} of $S$,   respectively.   If $\bar{d} (S)=\underline{d}(S)=d,  $ we call $S$ to have density of $d.  $  Define $$B^* (S):=\limsup_{|I|\rightarrow\infty}\frac{|S\cap I|}{|I|},  \,  \,   B_* (S):=\liminf_{|I|\rightarrow\infty}\frac{|S\cap I|}{|I|},  $$
here $I\subseteq \mathbb{N}$ is taken from finite continuous integer intervals.   These two concepts are called {\it Banach upper density} and {\it Banach lower density} of $S$,   respectively.
 A set $S\subseteq \mathbb{N}$ is called {\it syndetic},   if there is $N\in\N$ such that for any $n\in\N,  $ $ S\cap \{n,  n+1,  \cdots,  n+N\}\neq \emptyset.    $
  \end{Def}These concepts of density are basic and have played important roles in the field of dynamical systems,   ergodic theory and number theory,   etc.
Let $U,  V\subseteq X$ be two nonempty open subsets and $x\in X.  $
 Define
  sets of visiting time
 $$N (U,  V):=\{n\geq 1\mid  U\cap f^{-n} (V)\neq \emptyset\} \,  \,  \text{ and } \,  \,  N (x,  U):=\{n\ge 1\mid  f^n (x)\in U\}.  $$

\begin{Def} (Statistical $\omega$-limit sets)
 For $x\in X$ and $\xi=\overline{d},   \,  \underline{d},   \,  B^*,  \,   B_*$,   a point $y\in X$ is called $x$-$\xi$-accessible,   if for any $ \varepsilon>0,  \,  N (x,  V_\varepsilon (y))\text{ has positive   density w.  r.  t.   }\xi,  $ where     $V_\varepsilon (x)$ denotes  the  ball centered at $x$ with radius $\varepsilon$.
 Let $$\omega_{\xi}(x):=\{y\in X\,  |\,   y\text{ is } x-\xi-\text{accessible}\}.  $$  For convenience,   it is called {\it $\xi$-$\omega$-limit set of $x$}.
We also call $\omega_{B_*}(x)$ to be {\it syndetic center} of $x$.
 \end{Def}
  It is obvious that
\begin{equation}\label{omega-relation}
  \omega_{B_*}(x)\subseteq \omega_{\underline{d}}(x)\subseteq \omega_{\overline{d}}(x)\subseteq \omega_{B^*}(x)\subseteq \omega_f(x).
\end{equation}
In \cite{AAN} for maps and \cite{AF} for flows  that $\omega_{\overline{d}}(x)$ is called essential $\omega$-limit set of $x$.
  In \cite{Zhou93,DT}, authors show that these concept have a close relationship with measure space.
 From \cite{DT}, we know that
for any $x\in X$, if $\omega_{B_*}(x)=\emptyset,$ then $x$ satisfies only one of following twelve cases:
  \begin{description}

  \item[Case  (1)   ]  \,  \  $\emptyset=\omega_{B_*}(x)\subsetneq    \omega_{\underline{d}}(x)=
\omega_{\overline{d}}(x)  \subsetneq \omega_{B^*}(x)= \omega_f(x) ;$
\item[Case  (1')   ]  \,  \   $\emptyset=\omega_{B_*}(x)\subsetneq  \omega_{\underline{d}}(x)=
\omega_{\overline{d}}(x)  \subsetneq \omega_{B^*}(x)\subsetneq \omega_f(x) ;$

  \item[Case  (2)    ]  \,  \   $    \emptyset=\omega_{B_*}(x)=\omega_{\underline{d}}(x)\subsetneq
\omega_{\overline{d}}(x)= \omega_{B^*}(x)= \omega_f(x) ;$

\item[Case  (2')    ]  \,  \  $\emptyset=\omega_{B_*}(x)=\omega_{\underline{d}}(x)\subsetneq
\omega_{\overline{d}}(x)= \omega_{B^*}(x)\subsetneq \omega_f(x) ;$

  \item[Case  (3)    ]  \, \
$\emptyset=\omega_{B_*}(x)\subsetneq  \omega_{\underline{d}}(x)\subsetneq
\omega_{\overline{d}}(x)= \omega_{B^*}(x)= \omega_f(x) ;$

 \item[Case  (3')    ]  \,  \
$\emptyset=\omega_{B_*}(x)\subsetneq  \omega_{\underline{d}}(x)\subsetneq
\omega_{\overline{d}}(x)= \omega_{B^*}(x)\subsetneq \omega_f(x) ;$

  \item[Case  (4)   ]  \,  \   $\emptyset=\omega_{B_*}(x)=  \omega_{\underline{d}}(x) \subsetneq \omega_{\overline{d}}(x)\subsetneq \omega_{B^*}(x)= \omega_f(x);$
  \item[Case  (4')    ]  \,  \   $\emptyset=\omega_{B_*}(x)=  \omega_{\underline{d}}(x) \subsetneq \omega_{\overline{d}}(x)\subsetneq \omega_{B^*}(x)\subsetneq \omega_f(x);$
 \item[Case  (5)    ]  \,  \
$\emptyset=\omega_{B_*}(x)\subsetneq \omega_{\underline{d}}(x) \subsetneq \omega_{\overline{d}}(x)\subsetneq \omega_{B^*}(x)= \omega_f(x);$

 \item[Case  (5')   ]  \, \
$\emptyset=\omega_{B_*}(x)\subsetneq \omega_{\underline{d}}(x) \subsetneq \omega_{\overline{d}}(x)\subsetneq \omega_{B^*}(x)\subsetneq \omega_f(x).$

  \item[Case  (6)    ] \, \ $  \emptyset=\omega_{B_*}(x)\subsetneq\omega_{\underline{d}}(x)= \omega_{\overline{d}}(x)= \omega_{B^*}(x)= \omega_f(x);$
  \item[Case  (6')    ]  \, \ $  \emptyset=\omega_{B_*}(x)\subsetneq\omega_{\underline{d}}(x)= \omega_{\overline{d}}(x)= \omega_{B^*}(x)\subsetneq\omega_f(x).$
  \end{description}
If $\omega_{B_*}(x)\neq \emptyset,$   then the relation  between  $   \omega_{B_*}(x),\,\omega_{\underline{d}}(x),\,\omega_{\overline{d}}(x),\, \omega_{B^*}(x),\,\omega_f(x)$ has sixteen possible cases,  which   are unknown. In this paper, we just consider the systems with $\omega_{B_*}(x)=\emptyset$. A surprising discovery is that strongly distributional chaos does not just occur  in one fixed layer. 
 Here we find there are twelve different layers for which every layer is strongly distributional chaoic, as a refined version of Theorem \ref{thm-NR-new}.

\begin{maintheorem}\label{thm-NR}
Suppose $(X,  f)$ is a mixing expanding map or a transitive Anosov diffeomorphism on a compact manifold. Then  the set   $ \{x\in X\ |\ x\ \text{satisfies Case}\ \mathrm{(i)}\}\cap NR(f)$  is strongly distributional chaoic for any fixed  $
i\in\{1,2,3,4,5,6,1',2',3',4',5',6'\}.$
\end{maintheorem}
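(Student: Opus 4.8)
The plan is to combine the density-controlled orbit construction that realizes each Case (i) (as in \cite{DT}) with the Cantor-coding technique for producing distributional chaos (as in \cite{CT}), carried out inside the shadowing and expansiveness framework available for mixing expanding maps and transitive Anosov diffeomorphisms. First I would record the structural tools: such $(X,f)$ is expansive, topologically mixing, and has the shadowing (pseudo-orbit tracing) property, and admits a rich supply of ergodic measures whose supports can be prescribed. From these one obtains a gluing principle: any finite list of orbit segments, each shadowing a generic orbit of some chosen measure, can be concatenated with bounded transition gaps into a genuine orbit, and by expansiveness the resulting point is essentially determined by its itinerary. This is exactly the engine behind the construction in \cite{DT} that produces, for each fixed $i$, a point $x$ with $\omega_{B_*}(x)=\emptyset$ and the prescribed pattern of (in)equalities among $\omega_{\underline{d}}(x),\,\omega_{\overline{d}}(x),\,\omega_{B^*}(x),\,\omega_f(x)$.

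For the construction itself I would fix, according to the target Case (i), a collection of compact invariant sets together with measures supported on them realizing the required limit sets, and fix the block-length and frequency schedule that in \cite{DT} makes the relevant upper, lower and Banach densities take the prescribed values and forces the syndetic center to be empty (the emptiness of $\omega_{B_*}$ coming from inserting ever-longer stretches that avoid any fixed neighborhood). Onto this statistical backbone I would graft a Cantor family: to each $\xi\in\{0,1\}^{\mathbb N}$ associate a point $x_\xi$ by making, at a designated sequence of chaos blocks, a binary choice that either keeps the orbit near a fixed reference point $p$ (a synchronizing insertion, common to all $\xi$) or drives it to distance close to $\diam(X)$ from the orbits carrying a different symbol (a separating insertion). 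Using mixing and shadowing, both insertions are compatible with the backbone, and by a rapidly growing schedule one arranges that along one sequence of times $n_k$ the synchronizing blocks occupy a fraction tending to $1$, while along another sequence $m_k$ the separating blocks occupy a fraction tending to $1$.

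Then I would verify two things. For membership in the Case-(i) set intersected with $NR(f)$: the synchronizing and separating targets are placed inside the compact sets already prescribed by Case (i), and the schedule is tuned so that the accumulation along the chaos subsequences contributes only to the upper (and Banach-upper) density sets that Case (i) permits, while the long avoidance stretches of the backbone preserve the lower-density sets, the required strict inclusions, the emptiness of $\omega_{B_*}(x_\xi)$, and non-recurrence, all uniformly in $\xi$. For chaos: given $\xi\neq\eta$, the synchronizing blocks are common to both codes and fill a fraction of $[0,n_k)$ tending to $1$, whence $F^*_{x_\xi x_\eta}(t)=1$ for every $t>0$; and along $m_k$ the separating blocks, where the two codes differ, fill a fraction tending to $1$ at which $d(f^i x_\xi, f^i x_\eta)$ is within $\eps$ of $\diam(X)$, giving $F_{x_\xi x_\eta}(s)=0$ for every $0<s<\diam(X)$. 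Uncountability of $\{x_\xi\}$ follows from expansiveness, since distinct codes yield distinct points. This is precisely strong distributional chaos.

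The main obstacle is the tension between the two density bookkeepings. The statistical structure of Case (i) is itself a web of delicate upper, lower and Banach-density equalities and strict inequalities, and the chaos grafting also manipulates the orbit's visiting frequencies; the separating insertions in particular must dominate along $m_k$ (to force $F_{xy}(s)=0$ for all $s<\diam(X)$) and the synchronizing ones along $n_k$, yet Banach densities are taken as suprema over all long intervals and are therefore highly sensitive to exactly such long inserted blocks. Balancing abundant-enough-to-force-chaos against invisible-to-the-prescribed-densities requires a carefully interleaved schedule in which every inserted target already belongs to the set Case (i) assigns to its density level, so that domination along a subsequence feeds only the intended upper-type set and never corrupts a lower-type set or collapses a strict inclusion. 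Since each of the twelve cases imposes a different pattern, the schedule must be tuned case by case (or organized so that a single scheme covers all twelve), and most of the technical effort will go into this scheduling and into checking that the density computations survive the grafting.
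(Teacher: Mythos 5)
Your outline captures the right general shape (a Cantor coding built by shadowing, grafted onto a statistically controlled backbone), but it has a genuine gap at the very point the theorem is about: you prove only distributional chaos of type 1, not \emph{strong} distributional chaos. In the paper's definition, a set is strongly distributional chaotic if for \emph{every} $\alpha\in\mathcal A$ it contains an uncountable $\alpha$-DC1-scrambled set, where the scrambling condition is $\Phi^*_{xy}(t,f,\alpha)=1$, i.e.\ for a set of times $i$ of upper density one the \emph{cumulative} sum $\sum_{j=0}^{i-1}d(f^jx,f^jy)$ must be less than $\alpha(i)t$ with $\alpha(i)=o(i)$. Your synchronizing blocks only give $d(f^ix,f^iy)<\varepsilon$ on a set of upper density one; that yields $\sum_{j<i}d(f^jx,f^jy)\lesssim \varepsilon i$, which is linear in $i$ and therefore never beats $\alpha(i)t$. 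This is why the paper works with the \emph{exponential} shadowing property (Theorems \ref{thm-2} and \ref{thm-3}, Lemma \ref{exponential-s-limit-shadowing}): on a common block the tracing error decays exponentially from the block ends, so the block's total contribution to the sum is a constant $\frac{4\varepsilon_1}{1-e^{-\lambda}}$ independent of its length (see \eqref{eq-1-2}), and one then waits a time $c_k$ as in \eqref{PS-N-add-2} until $\alpha(i)t$ exceeds the crude bound $(\text{length of all previous blocks})\cdot\operatorname{diam}X$ plus that constant. Ordinary shadowing cannot produce this, so a proof that never invokes exponential shadowing cannot reach the stated conclusion.

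The second problem is the "tension" you correctly identify but do not resolve: your separating insertions occupy density-one fractions of the intervals $[0,m_k)$, so they dominate the empirical measures along a subsequence and will in general destroy the prescribed pattern of $\omega_{\underline d},\omega_{\overline d},\omega_{B^*}$. The paper dissolves this tension structurally rather than by case-by-case scheduling: it builds the scrambled set inside a saturated set $G_K^\Lambda$ (Lemma \ref{DC1-omega-Lemma}), where both branches of the binary choice are periodic pseudo-orbits whose empirical measures approximate the \emph{same} $\mu=\theta\mu_1+(1-\theta)\mu_2\in K$ (Lemma \ref{lemma-double}), with separation coming from distal pairs in $G_{\mu_1},G_{\mu_2}$ rather than from driving orbits to distance $\operatorname{diam}X$; hence the chaos coding is statistically invisible and $V_f(x)=K$, $\omega_f(x)=\Lambda$ hold for every code. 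The twelve cases then follow with no density bookkeeping at all, because Proposition \ref{density-measure-prop} converts $V_f(x)=K$ and $\omega_f(x)=\Lambda$ directly into the four statistical $\omega$-limit sets (Lemma \ref{G_K-subset-omega}), the choice of $K$ and of $\Lambda$ versus $\Theta$ (Lemma \ref{lambda-theta-level}) realizes each pattern, emptiness of $\omega_{B_*}$ comes from ergodic measures with disjoint supports in $\mathcal M_f(\Lambda)$ (Proposition \ref{prop-IrregularMultianalysis-22222}), and non-recurrence is obtained simply by starting the shadowing point in an open set $U\subseteq X\setminus\Lambda$ so that $x\notin\omega_f(x)=\Lambda$. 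Without some substitute for these two mechanisms your construction does not close.
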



\subsection{Combination with Irregular Set and Level Set}
 Let us recall irregular set and level sets which were studied a lot in the dimension theory (or multifractal analysis), for example, see \cite{Barreira-Schmeling2000,BPS,  TDbeta,DOT,Thompson2009,Thompson2008}. Let $\mathcal M(X)$, $\mathcal M_f(X)$, $\mathcal{M}^{e}_{f}(X)$ denote the space of probability measures, $f$-invariant, $f$-ergodic probability measures respectively.

For a continuous function $\varphi$ on $X$, denote the \emph{$\varphi$-irregular set} by
\begin{equation*}
  I_{\varphi}(f) := \left\{x\in X: \lim_{n\to\infty}\frac1n\sum_{i=0}^{n-1}\varphi(f^ix) \,\, \text{ diverges }\right\},
\end{equation*}
Denote:
$$L_\varphi:=\left[\inf_{\mu\in \mathcal M_{f}(X)}\int\varphi d\mu,  \,  \sup_{\mu\in \mathcal M_{f}(X)}\int\varphi d\mu\right],\ Int(L_\varphi):=\left(\inf_{\mu\in \mathcal M_{f}(X)}\int\varphi d\mu,  \,  \sup_{\mu\in \mathcal M_{f}(X)}\int\varphi d\mu\right).  $$

\noindent For any $a\in  L_\varphi,$
denote the level set by
\begin{equation*}
  R_{\varphi}(a) := \left\{x\in X: \lim_{n\to\infty}\frac1n\sum_{i=0}^{n-1}\varphi(f^ix)=a\right\}.
\end{equation*}

The following two theorems are to say that there are no effect on strongly distributional chaos in the twelve layers of Theorem \ref{thm-NR} when they intersect irregular set and level sets. 
\begin{maintheorem}\label{thm-Ir}
Suppose $(X,  f)$ is a mixing expanding map or a transitive Anosov diffeomorphism on a compact manifold. Let $\varphi$ be a continuous function on $X$. If $I_{\varphi}(f)\neq\emptyset$, then  the set $\{x\in X\mid x \text{ satisfies case } (i)\}\cap NR(f)\cap I_{\varphi}(f)$ is strongly distributional chaotic for any $i\in\{1,2,3,4,5,6,1',2',3',4',5',6'\}.$
\end{maintheorem}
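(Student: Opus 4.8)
The plan is to refine the Cantor-set construction already producing Theorem~\ref{thm-NR} so that every point it produces additionally lies in $I_\varphi(f)$, while keeping it inside $\{x:\text{Case }(i)\}\cap NR(f)$ and strongly distributionally scrambled. Since any weak-$*$ accumulation point of empirical measures $\frac1n\sum_{k<n}\delta_{f^kx}$ is $f$-invariant, the hypothesis $I_\varphi(f)\neq\emptyset$ forces $\inf_{\mu\in\mathcal M_f(X)}\int\varphi\,d\mu<\sup_{\mu\in\mathcal M_f(X)}\int\varphi\,d\mu$, i.e. $L_\varphi$ is nondegenerate. I would fix ergodic $\mu_1,\mu_2\in\mathcal M^{e}_{f}(X)$ with $a_1:=\int\varphi\,d\mu_1<\int\varphi\,d\mu_2=:a_2$ and, by the ergodic theorem, finite orbit segments approximating $\mu_1$ and $\mu_2$ whose empirical $\varphi$-averages lie within $1/m$ of $a_1$ and $a_2$ as their lengths tend to infinity.

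First I would recall the skeleton of the proof of Theorem~\ref{thm-NR}: using the shadowing property one concatenates finite orbit segments along a rapidly growing block scheme, shadows the resulting pseudo-orbit by a genuine orbit, and codes an uncountable family of such orbits by points of an infinite product space. The scheme is tuned so that the prescribed upper, lower and Banach densities of the visiting-time sets reproduce the inclusions defining Case~$(i)$, so that long transition stretches drive the orbit into ever-newer regions forcing non-recurrence, and so that strong distributional chaos holds. Writing $F^*_{xy}(t)=\limsup_{n}\frac1n|\{0\le i<n:d(f^ix,f^iy)<t\}|$ and $F_{xy}(t)$ for the corresponding $\liminf$ (Section~\ref{subsection-chaos}), strong distributional chaos asks for an uncountable subset on which $F^*_{xy}(t)=1$ for every $t>0$ and $F_{xy}(s)=0$ for every $0<s<\diam(X)$ whenever $x\neq y$. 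In the construction this is arranged by a sparse sequence of common \emph{synchronizing} blocks, each dwarfing the whole prefix before it (giving $F^*_{xy}=1$), alternating with codeword-dependent \emph{separating} blocks, also each dwarfing their prefix, which for any two distinct codewords push the two orbits to near-antipodal regions infinitely often (giving $F_{xy}(s)=0$ up to $\diam(X)$).

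Next I would carry the $\varphi$-oscillation \emph{inside} the synchronizing blocks. Since those blocks are common to all codewords and each dominates its entire prefix, filling the synchronizing block at an odd (resp. even) stage with a segment approximating $\mu_1$ (resp. $\mu_2$) forces the running average $\frac1n\sum_{k<n}\varphi(f^kx)$ to lie within $2/m$ of $a_1$ (resp. $a_2$) at the end of that block; hence $\liminf_n\frac1n\sum_{k<n}\varphi(f^kx)\le a_1<a_2\le\limsup_n\frac1n\sum_{k<n}\varphi(f^kx)$, so every point of the Cantor set lies in $I_\varphi(f)$ independently of its codeword. Because the oscillation lives inside the closeness blocks, it leaves the separating blocks—and hence the estimate $F_{xy}(s)=0$—untouched, and as those blocks are common it only reinforces $F^*_{xy}=1$.

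The delicate point, and the main obstacle, is compatibility with the density bookkeeping pinning down Case~$(i)$: since the synchronizing blocks carry the bulk of the orbit, the supports of $\mu_1,\mu_2$ largely dictate $\omega_{\overline d}(x)$ and $\omega_{B^*}(x)$, so $\mu_1,\mu_2$ must be chosen \emph{both} with $a_1<a_2$ \emph{and} with supports fitting the target relations of Case~$(i)$, the extra points required in $\omega_{B^*}(x)\setminus\omega_{\overline d}(x)$ or $\omega_f(x)\setminus\omega_{B^*}(x)$ still being supplied at zero density by the separating and transition blocks exactly as in Theorem~\ref{thm-NR}. Equivalently, one must realize an empirical-measure accumulation set $V(x)$ that simultaneously induces Case~$(i)$ and contains two measures separated by $\varphi$; the hypothesis $I_\varphi(f)\neq\emptyset$ guarantees such measures exist, and the freedom in $V(x)$ permitted by the strict inclusions of Case~$(i)$ leaves room to include them. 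Granting this fit—the genuinely technical step—the verification that the resulting uncountable set lies in $\{x:\text{Case }(i)\}\cap NR(f)\cap I_\varphi(f)$ and is strongly distributionally scrambled then follows the same estimates already used for Theorems~\ref{thm-NR-new} and~\ref{thm-NR}.
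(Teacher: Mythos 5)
Your outline follows the same general strategy as the paper (concatenate orbit segments realizing prescribed empirical measures, shadow, code an uncountable family), but it has two genuine gaps. The first is that you are proving the wrong property: you define strong distributional chaos via $F^*_{xy}(t)=\limsup_n\frac1n|\{i<n:d(f^ix,f^iy)<t\}|=1$ together with $F_{xy}=0$, which is ordinary DC1. The paper's notion requires, for \emph{every} $\alpha\in\mathcal A$ (with $\alpha(n)/n\to0$), an uncountable set on which $\Phi^*_{xy}(t,f,\alpha)=\limsup_n\frac1n|\{i\le n:\sum_{j=0}^{i-1}d(f^jx,f^jy)<\alpha(i)t\}|=1$; that is, the \emph{partial sums} of the orbit distances must stay below a sublinear threshold on a set of times of upper density one. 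A synchronizing block in which two shadowing orbits are merely $t$-close at each step contributes roughly $nt$ to the sum, which is not $o(n)$, so your construction does not give $\alpha$-DC1 pairs. This is exactly why the paper needs the \emph{exponential} shadowing property (Theorem \ref{thm-2}, \ref{thm-3}, Lemma \ref{exponential-s-limit-shadowing}): inside a common block the two shadowing errors decay like $\varepsilon e^{-\min\{j,i_n-1-j\}\lambda}$, so the distance sum over the block is bounded by $4\varepsilon_1/(1-e^{-\lambda})$, a constant, which is what makes the estimate (\ref{eq-1-3}) and hence $\Phi^*_{xy}(t,f,\alpha)=1$ go through. Your proposal never invokes exponential shadowing and cannot reach the stated conclusion without it.

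The second gap is that the step you label ``the genuinely technical step'' and then grant is precisely where the proof lives. The paper does not patch irregularity onto a pre-existing Case-$(i)$ construction; it organizes everything through saturated sets $G_K^{\Lambda}$ (Lemma \ref{DC1-omega-Lemma}), so that $\omega_{\underline d}(x)=\bigcap_{\mu\in K}S_\mu$, $\omega_{\overline d}(x)=\overline{\bigcup_{\mu\in K}S_\mu}$, $\omega_{B^*}(x)=C^*_\Lambda$ and $\omega_f(x)=\Lambda$ are all read off from $K$ and $\Lambda$ via Lemma \ref{G_K-subset-omega}, while $x\in I_\varphi(f)$ follows because $K$ contains two measures with distinct $\varphi$-integrals. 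Realizing all twelve cases simultaneously with the $\varphi$-separation requires constructing a proper subsystem $\Lambda$ with strong specification and $a\in Int(L_\varphi^\Lambda)$ and a strictly larger internally chain mixing $\Theta$ with $C^*_\Theta=\Lambda\subsetneq\Theta$ (Lemmas \ref{strong-elementary-entropy-dense} and \ref{lambda-theta-level}), plus ergodic measures with pairwise disjoint, respectively full, supports (Proposition \ref{prop-IrregularMultianalysis-22222}, Lemmas \ref{lemma-full}, \ref{lemma-ergo}) assembled into the six explicit sets $K_1,\dots,K_6$. Note in particular that your default choice of $\mu_1,\mu_2$ with disjoint supports cannot produce Case (6), where one needs $\bigcap_{\mu\in K}S_\mu=\Lambda$, so the measures must be chosen case by case. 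Finally, non-recurrence in the paper comes from placing the shadowing point in an open set $U\subseteq X\setminus\Lambda$ while forcing $\omega_f(x)=\Lambda$; ``driving the orbit into ever-newer regions'' does not by itself show $x\notin\omega_f(x)$.
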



\begin{maintheorem}\label{thm-Level}
Suppose $(X,  f)$ is a mixing expanding map or a transitive Anosov diffeomorphism on a compact manifold. Let $\varphi$ be a continuous function on $X$. If $Int(L_{\varphi})\neq\emptyset$, then for any $a\in Int(L_{\varphi})$,   the set $\{x\in X\mid x \text{ satisfies case } (i)\}\cap NR(f)\cap R_\varphi(a)$ is strongly distributional chaotic for any $i\in\{1,2,3,4,5,6,1',2',3',4',5',6'\}.$
\end{maintheorem}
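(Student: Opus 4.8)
The plan is to run the same shadowing-based Cantor-set construction that underlies Theorems \ref{thm-NR} and \ref{thm-Ir}, changing only the way the building blocks interact with $\varphi$. The starting observation is that the level-set condition can be rephrased purely in terms of empirical measures: since $\frac1n\sum_{i=0}^{n-1}\varphi(f^ix)=\int\varphi\,d\big(\frac1n\sum_{i=0}^{n-1}\delta_{f^ix}\big)$, a point $x$ lies in $R_\varphi(a)$ if and only if every weak-$*$ limit of its empirical measures lies in the affine slice
\[
V_a:=\Big\{\mu\in\mathcal M_f(X)\ \big|\ \int\varphi\,d\mu=a\Big\}.
\]
Because $a\in Int(L_\varphi)$, there are invariant measures with $\int\varphi\,d\mu$ on either side of $a$, so by connectedness of $\mathcal M_f(X)$ the set $V_a$ is nonempty, closed and convex; and for mixing expanding maps and transitive Anosov diffeomorphisms the periodic measures are dense in $\mathcal M_f(X)$, hence $V_a$ is approximated by periodic measures whose supports can be prescribed as freely as needed. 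This is the only place the interiority of $a$ enters, and it is precisely the feature that replaces the two-sided oscillation scheme used for the irregular set in Theorem \ref{thm-Ir}.

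First I would reproduce the block architecture from the proof of Theorem \ref{thm-NR}: using the specification/shadowing property, build a Cantor set of points, each shadowing a concatenation of long orbit segments tracking carefully chosen periodic orbits, separated by gap segments of bounded (hence asymptotically negligible) length. The segment lengths and the choice of which orbit is tracked at each stage are dictated by the target case $(i)$, exactly as there: the blocks realizing $\omega_{\overline d}$, $\omega_{B^*}$ and $\omega_f$ are inserted with the prescribed upper and Banach-upper densities, while long ``silent'' stretches keep the lower and Banach-lower densities at $0$ (so $\omega_{B_*}(x)=\emptyset$) and an extra device forces $x\notin\omega_f(x)$, i.e. $x\in NR(f)$. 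The new ingredient is that every periodic orbit tracked is drawn from a family approximating measures in $V_a$, with the approximation error tending to $0$ stage by stage; since the gap segments have vanishing density as well, every empirical limit measure then lands in $V_a$ and $\frac1N\sum_{i=0}^{N-1}\varphi(f^ix)\to a$.

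Second, to upgrade this to strong distributional chaos I would interleave, at a hierarchy of scales, synchronization blocks and separation blocks across the branches of the Cantor set, as in Theorems \ref{thm-NR} and \ref{thm-Ir}. On a synchronization block all branch points shadow a common segment, so for any pair $x\ne y$ in the set $d(f^k x,f^k y)$ stays small on a proportion of times tending to $1$ along a subsequence, giving $F^*_{xy}(t)=1$ for every $t>0$; on a separation block the branches shadow segments whose supports are pushed a fixed distance apart, and by making these blocks overwhelmingly long one forces $F_{xy}(t)=0$ for every $t<\diam X$, which is the strong form of the chaos. The key point is that both the synchronization and separation segments are again chosen from the $V_a$-family, so this scrambling never disturbs the convergence of $\frac1N\sum\varphi(f^ix)$ to $a$, and the single uncountable Cantor set therefore lies simultaneously in $\{x:\text{case }(i)\}\cap NR(f)\cap R_\varphi(a)$.

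The hard part will be the simultaneous satisfaction of three competing schedules inside one orbit: the density profile demanded by case $(i)$ (long excursions interleaved with long silent gaps for the empty syndetic center and for non-recurrence), the strong scrambling (long synchronized and, dominating them, long separated stretches of density $1$ along subsequences), and the rigid requirement that the time-averaged integral of $\varphi$ stay pinned at $a$ throughout. The resolution is bookkeeping rather than any single sharp estimate: one must check that every maneuver — an excursion realizing $\omega_{B^*}$, a separation block creating distance, or a transition gap — is executed with orbit segments whose $\varphi$-average is within a stage-dependent error of $a$, and that the cumulative contribution of gaps and errors has density zero. Keeping these three schedules mutually compatible is where the care lies, and the interiority $a\in Int(L_\varphi)$ is exactly what provides enough room inside $V_a$ to make all three realizable at once.
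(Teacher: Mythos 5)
Your overall strategy---rephrasing $R_\varphi(a)$ as the requirement that every empirical limit measure lie in the slice $V_a=\{\mu:\int\varphi\,d\mu=a\}$, and then running the shadowing-based scrambled-set construction with all building blocks drawn from $V_a$---is the paper's strategy: the paper packages the construction once and for all as Lemma~\ref{DC1-omega-Lemma} (uncountable $\alpha$-DC1-scrambled subsets of the saturated sets $G_K^{\Lambda}\cap U$ with $U\cap\Lambda=\emptyset$) and then, in Theorem~\ref{thm-It}, merely selects compact connected sets $K_1,\dots,K_6$ of invariant measures all integrating $\varphi$ to $a$. However, your proposal has two genuine gaps.

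The first concerns the word \emph{strongly}. Your synchronization blocks only give $d(f^jx,f^jy)<t$ on a set of times of upper density one, which is the criterion for ordinary DC1. The $\alpha$-DC1 condition demands $\Phi^*_{xy}(t,f,\alpha)=1$, i.e.\ $\sum_{j=0}^{i-1}d(f^jx,f^jy)<\alpha(i)t$ for a density-one set of indices $i$ along a subsequence, where $\alpha(i)=o(i)$. If all you know on a synchronization block is $d(f^jx,f^jy)<2\varepsilon$, then the partial sum up to a time $i$ deep inside that block grows like $2\varepsilon i$ and eventually exceeds $\alpha(i)t$ for every fixed $t$, so the condition fails. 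The paper gets around this with the \emph{exponential} shadowing property (Theorems~\ref{thm-2} and~\ref{thm-3}, Lemma~\ref{exponential-s-limit-shadowing}): the shadowing errors on each block decay like $\varepsilon e^{-\min\{j,n-1-j\}\lambda}$, so the sum of distances over an entire synchronization block is bounded by the constant $4\varepsilon_1/(1-e^{-\lambda})$, and the cumulative sum up to time $i$ is then controlled by the low-density contribution of all earlier blocks and absorbed into $\alpha(i)t$ via the cutoffs $c_k$ of \eqref{PS-N-add-2}. Without exponentially decaying shadowing errors your construction proves DC1 but not strong distributional chaos.

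The second gap concerns the primed cases $(1')$--$(6')$. By Proposition~\ref{density-measure-prop}, $\omega_{B^*}(x)=\overline{\bigcup_{\mu\in \mathcal M_f(\omega_f(x))}S_{\mu}}$ depends only on the set $\omega_f(x)$ and not on any visiting schedule, so no choice of insertion densities for your blocks can produce $\omega_{B^*}(x)\subsetneq\omega_f(x)$. One must instead arrange $\omega_f(x)=\Theta$ for an auxiliary compact invariant set $\Theta$ whose measure center is a proper subset; the paper constructs such a $\Theta$ in Lemma~\ref{lambda-theta-level}: it contains $\Lambda$ together with the closure of a non-recurrent orbit accumulating on $\Lambda$, is internally chain mixing, satisfies $\mathcal M_f(\Theta)=\mathcal M_f(\Lambda)$, hence $C^*_\Theta=\Lambda\subsetneq\Theta$ and $a\in Int(L_\varphi^\Theta)$. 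Your proposal has no counterpart to this step, and without it half of the twelve cases are out of reach. (Two smaller points: the separation constant $t_0$ in the DC1 lower bound comes from a distal pair in $G_{\mu_1}$, $G_{\mu_2}$, guaranteed by Proposition~\ref{proposition of distal} because $\Lambda$ is chosen fixed-point free---not from ``every $t<\operatorname{diam}X$''; and $\omega_{B_*}(x)=\emptyset$ is automatic once $\omega_f(x)$ carries two invariant measures with disjoint supports, rather than something enforced by silent stretches.)
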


In this paper, we mainly consider continuous maps on a compact metric space. Everything can also be formulated for homeomorphisms on a compact metric space. The dfference is that we must consider expansivity in place of positive expansivity, and shadwoing property of homeomorphisms is defined for bilateral pesudo-orbit rather than unilateral pesudo-orbit.
We leave the details to the readers.

\bigskip

\noindent {\bf Organization of the paper.} Section \ref{Section-Prelim} is a review  of definitions to make precise the statements of the theorems and some basic facts useful for the proof of main results.   In Section \ref{sectionLemma} we give some key technique lemmas 
 for which Lemma \ref{DC1-omega-Lemma} and \ref{strong-elementary-entropy-dense} are the crucial lemmas to prove the main theorems  and
 in Section \ref{sectionEndOfProofs}   we end  the proof of main theorems. In Section \ref{section-application}, we apply the results in the previous sections to more systems, including mixing subshifts of finite type, $\beta$-shifts, homoclinic classes and hyperbolic ergodic measures.

\section{Preliminaries}\label{Section-Prelim}

\subsection{Internally Chain Mixing and Shadowing Property}
Given a dynamical system $(X,f).$
For $l\in\mathbb{N}$, a sequence $\mathfrak{C}^l=\langle x_1,  \cdots,  x_l\rangle$ is called a \emph{chain} with length $l$. Define
$$\delta_{\mathfrak{C}^l}:=\frac{1}{l}\sum_{i=1}^{l}\delta_{x_i}.$$
\noindent For $A\subseteq X$, we say a chain $\mathfrak{C}^l=\langle x_1,  \cdots,  x_l\rangle$ is in $A$ if $\{x_i\}_{i=1}^{l}\subseteq A$.  Furthermore,   if $d(f(x_i),  x_{i+1})<\eps,  1\leq i\leq l-1$,   we call $\mathfrak{C}^l$ an $\eps$-chain with length $l$. We say an $\eps$-chain $\mathfrak{C}^l_{ab}=\langle x_1,  \cdots,  x_l\rangle$ connects $a$ and $b$ if $x_1=a$ and $d(f(x_l),b)<\eps$.  For two $\eps$-chain $\mathfrak{C}^{l_1}_{ab}=\langle x_1,  \cdots,  x_{l_1}\rangle$ and $\mathfrak{C}^{l_2}_{bc}=\langle y_1,  \cdots,  y_{l_2}\rangle$, define
$$\mathfrak{C}^{l_1}_{ab}\mathfrak{C}^{l_2}_{bc}=\langle x_1,  \cdots,  x_{l_1}, y_1,  \cdots,  y_{l_2}\rangle.$$
Obviously, $\mathfrak{C}^{l_1}_{ab}\mathfrak{C}^{l_2}_{bc}$ is $\eps$-chain with length $l_1+l_2$ connecting $a$ and $c$. For $x\in X$, we define
$$orb(x,n):=\langle x, fx, \cdots,  f^{n-1}x\rangle.$$

\subsubsection{Internally Chain Mixing}
\begin{Def}\label{Def-ICM}
Let $A\subseteq X$ be a nonempty closed invariant set. We call $A$ \emph{internally chain transitive} if for any $a,  b\in A$ and any $\eps>0$,   there is an $n\in\mathbb{N}$ and an $\eps$-chain $\mathfrak{C}_{ab}^n$ in $A$ connecting $a$ and $b$. We call $A$ \emph{internally chain mixing} if for any $a,  b\in A$ and any $\eps>0$,   there is an $N\in\mathbb{N}$, such that for any $n\geq N$, there is an $\eps$-chain $\mathfrak{C}_{ab}^n$ in $A$ connecting $a$ and $b$. We denote the collection of internally chain transitive(mixing) sets by $ICT(ICM)$. Obviously $ICM\subseteq ICT$.
\end{Def}

\begin{Lem}\label{omega-subset-ICT}\cite{HSZ-chain-trans}
For any $x\in X$, $\omega_f(x)\in ICT$.
\end{Lem}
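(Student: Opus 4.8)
The plan is to check the three structural requirements for membership in $ICT$ — that $\omega_f(x)$ is nonempty, closed and invariant — and then, for arbitrary $a,b\in\omega_f(x)$ and $\eps>0$, to exhibit an $\eps$-chain lying entirely in $\omega_f(x)$ that connects $a$ to $b$. Nonemptiness and closedness are immediate from compactness of $X$ and the defining formula $\omega_f(x)=\bigcap_{n\ge1}\overline{\bigcup_{k\ge n}\{f^kx\}}$, an intersection of a nested family of nonempty compact sets; invariance $f(\omega_f(x))=\omega_f(x)$ is the standard consequence of continuity of $f$ together with compactness (forward invariance by passing limits through $f$, and the reverse inclusion by extracting a convergent subsequence of the predecessors $f^{n_k-1}x$).

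The core of the argument is the chain construction, and the one auxiliary fact I would isolate first is the \emph{attracting property}: $d(f^kx,\omega_f(x))\to0$ as $k\to\infty$. This follows by contradiction, since a subsequence bounded away from $\omega_f(x)$ would, by compactness, accumulate at a point that belongs to $\omega_f(x)$ by definition yet sits at positive distance from it. Having fixed $\eps>0$, I would invoke uniform continuity of $f$ on the compact space $X$ to choose $\delta\in(0,\eps/2)$ with $d(p,q)<\delta\Rightarrow d(f(p),f(q))<\eps/2$, and then pick $N_0$ so that every orbit point $f^kx$ with $k\ge N_0$ lies within $\delta$ of $\omega_f(x)$.

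Next I would exploit that $a$ and $b$ are genuine $\omega$-limit points, so the orbit visits every $\delta$-neighbourhood of each of them infinitely often; hence I can select indices $N_0\le m<n$ with $d(f^mx,a)<\delta$ and $d(f^nx,b)<\delta$. The key idea is that the honest orbit segment $f^mx,\dots,f^nx$ is an \emph{exact} orbit (no chain error at all) whose indices all exceed $N_0$, so it tracks $\omega_f(x)$ to within $\delta$. I would therefore replace each intermediate point $f^jx$ for $m<j\le n-1$ by some $y_j\in\omega_f(x)$ with $d(f^jx,y_j)<\delta$, set $y_m:=a$, and propose the chain $\langle a,y_{m+1},\dots,y_{n-1}\rangle$ (read as $\langle a\rangle$ when $n=m+1$). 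Each link is controlled by $d(f(y_j),y_{j+1})\le d(f(y_j),f(f^jx))+d(f^{j+1}x,y_{j+1})<\eps/2+\delta<\eps$ via uniform continuity, and the terminal condition $d(f(y_{n-1}),b)\le d(f(y_{n-1}),f(f^{n-1}x))+d(f^nx,b)<\eps/2+\delta<\eps$ is estimated identically; since every $y_j$ as well as $a$ and $b$ lies in $\omega_f(x)$, the chain is genuinely inside the set.

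The main obstacle — and the feature the whole proof is organized around — is the word \emph{internally}: one cannot simply take the orbit segment itself as the chain, because orbit points need not belong to $\omega_f(x)$. The resolution is precisely the attracting property, which lets me push each orbit point onto a nearby point of $\omega_f(x)$ while absorbing the resulting displacement into the $\eps$-tolerance through uniform continuity. I expect everything else to be routine triangle-inequality book-keeping, and I note that no mixing, specification or shadowing hypothesis on $(X,f)$ is used, so the statement holds for an arbitrary continuous map on a compact metric space.
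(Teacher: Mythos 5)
Your proof is correct, and the paper itself gives no argument for this lemma---it is simply quoted from the reference \cite{HSZ-chain-trans}---so there is nothing in the text to diverge from; your construction (establish nonemptiness, closedness and invariance, prove the attracting property $d(f^kx,\omega_f(x))\to 0$, then push a late exact orbit segment from a $\delta$-neighbourhood of $a$ to one of $b$ onto nearby points of $\omega_f(x)$, absorbing the displacement into the $\eps$-tolerance via uniform continuity) is precisely the standard proof from that reference. All the estimates, including the degenerate length-one chain when $n=m+1$, check out against the paper's definitions of $\eps$-chain and of a chain connecting $a$ and $b$.
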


\begin{Lem}\label{ICT-in-ICM}
Suppose $\Lambda\in ICT$, $A\in ICM$ and $A\subseteq \Lambda$ . Then $\Lambda\in ICM$.
\end{Lem}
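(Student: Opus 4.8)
The plan is to establish internal chain mixing of $\Lambda$ by a \emph{pivot-and-padding} argument: I route every long chain through a single fixed hub point of $A$, using internal chain transitivity of $\Lambda$ to build two fixed-length bridges into and out of the hub, and internal chain mixing of $A$ to supply a loop at the hub whose length can be tuned freely. Concretely, fix $a,b\in\Lambda$ and $\eps>0$, and fix any point $c\in A$; this is legitimate because an $ICM$ set is nonempty by definition. The goal is to produce, for all sufficiently large $n$, an $\eps$-chain in $\Lambda$ of length exactly $n$ connecting $a$ and $b$.

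First I would apply internal chain transitivity of $\Lambda$ twice, to the pairs $(a,c)$ and $(c,b)$ (both pairs lie in $\Lambda$ since $c\in A\subseteq\Lambda$), obtaining an $\eps$-chain $\mathfrak{C}^{p}_{ac}$ in $\Lambda$ connecting $a$ to $c$ and an $\eps$-chain $\mathfrak{C}^{q}_{cb}$ in $\Lambda$ connecting $c$ to $b$, of some fixed lengths $p$ and $q$; these two bridges are frozen for the rest of the argument. Next I would apply internal chain mixing of $A$ to the single pair $(c,c)$: there is $N_0\in\mathbb{N}$ such that for every $m\geq N_0$ there is an $\eps$-chain $\mathfrak{C}^{m}_{cc}$ in $A$ connecting $c$ to itself, of length exactly $m$; since $A\subseteq\Lambda$, each such loop is also an $\eps$-chain in $\Lambda$. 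Finally I would set $N:=p+q+N_0$ and, for any $n\geq N$, put $m:=n-p-q\geq N_0$ and form the concatenation
$$\mathfrak{C}^{p}_{ac}\,\mathfrak{C}^{m}_{cc}\,\mathfrak{C}^{q}_{cb}.$$
By the concatenation rule recorded just before Definition \ref{Def-ICM} (applied twice), this is an $\eps$-chain in $\Lambda$ connecting $a$ and $b$ of length $p+m+q=n$; the two gluing inequalities at the junctions hold automatically because the hub point is matched exactly (the first bridge ends with $d(f(\cdot),c)<\eps$ while the padding loop begins at $c$, and likewise at the second junction). As $n$ ranges over all integers $\geq N$, this yields chains of every length $\geq N$, which is precisely internal chain mixing of $\Lambda$.

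I do not anticipate a genuine obstacle: the argument is essentially combinatorial bookkeeping of chain lengths, and the two hypotheses play complementary, transparent roles—$ICT$ of $\Lambda$ furnishes the two fixed-length bridges $a\to c$ and $c\to b$, while $ICM$ of $A$ furnishes the adjustable-length padding loop at $c$ that lets us hit every target length. The only point requiring care is that the bridge lengths $p$ and $q$ are fixed and only the middle loop is free, so the additive offset $p+q$ must be absorbed into the threshold; this is exactly why the correct threshold is $N=p+q+N_0$ rather than $N_0$ itself.
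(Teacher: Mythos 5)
Your proposal is correct and follows essentially the same route as the paper's proof: fix a hub point in $A$, use internal chain transitivity of $\Lambda$ to build fixed-length bridges $a\to c$ and $c\to b$, and use internal chain mixing of $A$ to insert an adjustable-length loop at $c$, absorbing the bridge lengths into the threshold. The only (immaterial) difference is notational.
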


\begin{proof}
Fix a point $x\in A$. Then for any $\eps>0$, there is an $L\in\mathbb{N}$ such that for any $l\geq L$, there is an $\eps$-chain $\mathfrak{C}_{xx}^l$. Note that $\Lambda\in ICT$. Then for any $a,b\in \Lambda$, there are two $\eps$-chain $\mathfrak{C}_{ax}^{l_1}$ and $\mathfrak{C}_{xb}^{l_2}$ for some integers $l_1$ and $l_2.$ Then for any $\hat{l}\geq L+l_1+l_2$, there is an $\eps$-chain $\mathfrak{C}_{ab}^{\hat{l}}=\mathfrak{C}_{ax}^{l_1}\mathfrak{C}_{xx}^l\mathfrak{C}_{xb}^{l_2}$ connecting $a$ and $b$ where $l=\hat{l}-l_1-l_2$.
\end{proof}

\begin{Cor}
Suppose $\Lambda\in ICT$ and $\Lambda$ contains a fixed point. Then $\Lambda\in ICM$.
\end{Cor}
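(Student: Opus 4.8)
The plan is to reduce immediately to Lemma \ref{ICT-in-ICM}, which already does the heavy lifting: it upgrades any $ICT$ set to $ICM$ provided the set contains an $ICM$ subset. So all I need to supply is a concrete $ICM$ subset of $\Lambda$, and the fixed point furnishes the most economical one.

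Let $p\in\Lambda$ be the fixed point, so $f(p)=p$, and set $A:=\{p\}$. First I would check that $A$ is a nonempty closed invariant set, which is immediate since $f(p)=p$ forces $f(A)=A$ and a singleton in a metric space is closed. Then I would verify directly that $A\in ICM$. The only pair of points available in $A$ is $a=b=p$, so for a given $\eps>0$ I take the constant chain $\mathfrak{C}^n=\langle p,\cdots,p\rangle$ of arbitrary length $n$. Since $d(f(p),p)=0<\eps$ at every consecutive step, and $x_1=p$ together with $d(f(x_n),p)=d(f(p),p)=0<\eps$, this is a genuine $\eps$-chain in $A$ connecting $p$ to $p$. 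As such a chain exists for every $n\ge 1$, one may take $N=1$ in the definition, whence $\{p\}\in ICM$.

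With $A=\{p\}\in ICM$, $A\subseteq\Lambda$, and $\Lambda\in ICT$ by hypothesis, Lemma \ref{ICT-in-ICM} applies verbatim and yields $\Lambda\in ICM$, as desired.

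I do not expect any genuine obstacle here; the statement is essentially a one-line corollary of the preceding lemma once the fixed point is recognized as an $ICM$ seed. The only point needing a moment's care is confirming that the singleton truly satisfies the $ICM$ definition, in particular that the constant chains qualify as $\eps$-chains and that they connect $p$ to itself, after which the conclusion is purely a matter of citing Lemma \ref{ICT-in-ICM}.
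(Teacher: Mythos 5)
Your proof is correct and is exactly the argument the paper intends: the corollary is stated without proof immediately after Lemma \ref{ICT-in-ICM} precisely because the singleton $\{p\}$ of a fixed point is trivially an $ICM$ subset (constant chains of every length work), after which the lemma applies. Nothing further is needed.
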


\subsubsection{Shadowing Property and Exponential Shadowing Property}
\begin{Def}
	An infinite sequence $\langle x_n\rangle_{n=1}^{+\infty}$ of points
	in $X$ is a \emph{$\delta$-pseudo-orbit} for a dynamical system $(X,f)$ if $d(x_{n+1},f(x_{n}))<\delta$ for each $n \geq1$. We say that a dynamical system $(X,f)$ has the \emph{shadowing property} if for every $\varepsilon>0$ there is a $\delta >0$ such that any $\delta$-pseudo-orbit $\langle x_n\rangle_{n=1}^{\infty}$ can be $\varepsilon$-shadowed by a point
	$y\in X$, that is $d(f^{n}(y),x_{n})<\varepsilon$ for all $n \geq1$.
\end{Def}

Given $x\in X$ and $i\geq1$, let
\begin{equation*}
	\{x,i\}:=\{f^{j}(x):j=0,1,\dots,i-1\}.
\end{equation*}
For a sequence of points $\langle x_n\rangle_{n=1}^{+\infty}$ in $X$ and a sequence of positive integers $(i_n)_{n=1}^{+\infty}$ we call $\left\{x_{n}, i_{n}\right\}_{n=1}^{+\infty}$ a $\delta$-pseudo-orbit, if
$
d\left(f^{i_{n}}\left(x_{n}\right), x_{n+1}\right)<\delta
$
for all $n\geq1.$ Given $\varepsilon>0$ and $\lambda>0$, we call a point $x\in X$ an \emph{(exponentially) $(\varepsilon,\lambda)$-shadowing} point for a pseudo-orbit $\left\{x_{n}, i_{n}\right\}_{n=1}^{+\infty},$ if
$$
d\left(f^{c_{n}+j}(x), f^{j}\left(x_{n}\right)\right)<\varepsilon \cdot e^{-\min \left\{j, i_n-1-j\right\} \lambda}
$$
for all $0\leq j\leq i_{n}-1$ and $n\geq1$, where $c_{i}$ is defined as
$$
c_{n}=\left\{\begin{array}{ll}
	0, & \text { for } n=1 \\
	\sum_{m=1}^{n-1} i_{m}, & \text { for } n>1.
\end{array}\right.
$$
\begin{Def}
	Let $\lambda>0$. We say that a dynamical system $(X,f)$ has the \emph{exponential shadowing property} with exponent $\lambda$ if for every $\varepsilon>0$ there is a $\delta >0$ such that any $\delta$-pseudo-orbit $\{x_{n},i_{n}\}_{n=1}^{\infty}$ can be $(\varepsilon,\lambda)$-shadowed by a point $x\in X$.
\end{Def}

\begin{Rem}\label{remark-shadowing}
	From the definitions, if a dynamical system $(X,f)$ has exponential shadowing property, then it has shadowing property.
\end{Rem}
\begin{Def}
A sequence $\langle x_i\rangle_{i=1}^{+\infty}$ is called a \emph{limit-pseudo-orbit} if
$$\lim_{i\to\infty} d(f(x_i),  x_{i+1})=0.  $$
Moreover,   $\langle x_i\rangle_{i=1}^{+\infty}$ are \emph{limit-shadowed} by $y\in X$ if
$$\lim_{i\to\infty}d(f^{i-1}y,  x_i)=0.  $$
Then we say $(X,  f)$ has the \emph{limit-shadowing property} if any limit-pseudo-orbit can be limit-shadowed.
\end{Def}
\begin{Def}
For any $\delta>0$,   a sequence $\langle x_i\rangle_{i=1}^{+\infty}$ is called a \textit{$\delta$-limit-pseudo-orbit} if $\langle x_n\rangle_{n=1}^{+\infty}$ is both a $\delta$-pseudo-orbit and a limit-pseudo-orbit.
Furthermore,   $\langle x_n\rangle_{n=1}^{+\infty}$ is \textit{$\eps$-limit-shadowed} by some $y\in X$ if
$\langle x_n\rangle_{n=1}^{+\infty}$ is both $\eps$-shadowed and limit-shadowed by $y$.
Finallly,   we say that $(X,f)$ has the \textit{s-limit-shadowing} property if for any $\eps>0$,   there exists  $\delta>0$ such that any $\delta$-limit-pseudo-orbit can be $\eps$-limit-shadowed.
\end{Def}
\begin{Rem}
	When $f:X\to X$ is a homeomorphism, on a compact metric space, various shadowing properties are defined for $\langle x_n\rangle_{n=-\infty}^{+\infty}$ or $\{x_{n},i_{n}\}_{n=-\infty}^{\infty}.$ For example, see \cite{LSaka2005} for the definitions of shadowing property, limit-shadowing property, see \cite{Tian2015-2} for the  definition of exponential shadowing property.
\end{Rem}

We say that $(X,  f)$ is \emph{positively expansive} if there exists a constant $e>0$ such that for any $x\neq  y\in X$,   $d(f^i(x),  f^i(y))> e$ for some integer $i\geq 0$. We call $e$ the expansive constant.

\begin{Lem}\label{lem-shadowing-s-limit-shadowing} \cite{Sakai}
If $(X,  f)$ is positively expansive and has the shadowing property,
  then $(X,  f)$ has the s-limit shadowing property.
\end{Lem}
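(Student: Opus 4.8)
The plan is to exploit positive expansivity to upgrade the plain shadowing of a $\delta$-limit-pseudo-orbit into genuine limit-shadowing. The single feature of positive expansivity I will invoke is its contrapositive form: if $u,v\in X$ satisfy $d(f^i u,f^i v)\le e$ for all $i\ge0$, where $e$ is the expansive constant, then $u=v$.

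First I would fix the scales. Given $\varepsilon>0$, choose a decreasing sequence $\varepsilon_k\downarrow0$ with $\varepsilon_0<\min\{\varepsilon,e/2\}$, and for each $k$ use the shadowing property to produce $\delta_k>0$ (taken decreasing) so that every $\delta_k$-pseudo-orbit is $\varepsilon_k$-shadowed; I then set $\delta:=\delta_0$. Given a $\delta_0$-limit-pseudo-orbit $\langle x_n\rangle_{n\ge1}$, it is in particular a $\delta_0$-pseudo-orbit, so a point $y$ with $d(f^{n-1}y,x_n)<\varepsilon_0\le\varepsilon$ for all $n$ exists; this $y$ already settles the $\varepsilon$-shadowing half of the conclusion, and the whole task reduces to proving that this same $y$ also limit-shadows.

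For the limit-shadowing I would use the defining property $d(f(x_n),x_{n+1})\to0$ to pick integers $N_k\uparrow\infty$ for which the tail $\langle x_n\rangle_{n\ge N_k}$ is a $\delta_k$-pseudo-orbit, and shadow each tail by a point $y_k$ with $d(f^j y_k,x_{N_k+j})<\varepsilon_k$ for all $j\ge0$. The crucial observation is then that $z_k:=f^{N_k-1}y$ shadows the very same tail with error $\varepsilon_0$, so the triangle inequality gives $d(f^j z_k,f^j y_k)<\varepsilon_0+\varepsilon_k\le2\varepsilon_0<e$ for every $j\ge0$; positive expansivity thus forces $z_k=y_k$, that is $f^{N_k-1}y=y_k$. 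Consequently, for $n\ge N_k$ (writing $n=N_k+j$) we get $d(f^{n-1}y,x_n)=d(f^j y_k,x_{N_k+j})<\varepsilon_k$, and letting $k\to\infty$ yields $d(f^{n-1}y,x_n)\to0$, the desired limit-shadowing. Together with the $\varepsilon$-shadowing above, $y$ then $\varepsilon$-limit-shadows $\langle x_n\rangle$.

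The hard part, and the only place where expansivity rather than shadowing alone is used, is this key identification of $f^{N_k-1}y$ with $y_k$: one must align the time-indices of the global shadowing point and the tail-shadowing points correctly and keep the accumulated orbit distance $\varepsilon_0+\varepsilon_k$ strictly below the expansive constant $e$. Everything else is bookkeeping with the shadowing constants $\delta_k$ and the tail thresholds $N_k$.
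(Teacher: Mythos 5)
The paper does not actually prove this lemma: it is quoted verbatim from Sakai's work on shadowing properties for positively expansive maps, so there is no internal argument to compare against. Your proof is correct and self-contained, and it is essentially the standard argument one expects here (and the one used in the cited literature): shadow the whole $\delta$-limit-pseudo-orbit once with accuracy $\varepsilon_0<e/2$, shadow the tails with ever better accuracy $\varepsilon_k$, and use positive expansivity to force $f^{N_k-1}y=y_k$, so that the single point $y$ inherits the improving tail estimates and hence limit-shadows. The only thing worth flagging is bookkeeping: the paper's own definitions of shadowing ($d(f^n(y),x_n)<\varepsilon$, $n\ge 1$) and of limit-shadowing ($d(f^{i-1}y,x_i)\to 0$) are already off by one from each other, and when you restrict to the tail $\langle x_n\rangle_{n\ge N_k}$ and reindex it as a pseudo-orbit starting at index $1$, you should state explicitly which convention you are using so that $f^{N_k-1}y$ and $y_k$ track the tail with the \emph{same} time alignment; with any consistent choice the triangle inequality bound $\varepsilon_0+\varepsilon_k<e$ and the conclusion go through unchanged.
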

\begin{Lem}\label{exponential-s-limit-shadowing}
	If $(X,f)$ is positively expansive and has exponential shadowing property with exponent $\lambda$, then for any $\varepsilon>0$, there exists $\delta>0$ such that for any $\delta$-limit-pseudo-orbit $\left\{x_{n}, i_{n}\right\}_{n=1}^{+\infty}$, there exists $y\in X$ such that $\left\{x_{n}, i_{n}\right\}_{n=1}^{+\infty}$ is both $(\varepsilon,\lambda)$-shadowed and limit-shadowed by $y$.  
\end{Lem}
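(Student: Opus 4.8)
The plan is to adapt the proof of Lemma~\ref{lem-shadowing-s-limit-shadowing} to the exponential setting, extracting the limit-shadowing from the very same point that realizes the $(\varepsilon,\lambda)$-shadowing, with positive expansivity used to rigidify the choice. Let $e>0$ be an expansive constant. Since $(\varepsilon',\lambda)$-shadowing with $\varepsilon'\le\varepsilon$ immediately implies $(\varepsilon,\lambda)$-shadowing (the exponential weights $e^{-\min\{j,\,i_n-1-j\}\lambda}$ are identical), I may assume $\varepsilon<e/2$. Fix a decreasing sequence $\varepsilon=\varepsilon_1>\varepsilon_2>\cdots\to 0$ with each $\varepsilon_m<e/2$, and for each $m$ let $\delta_m>0$ be the constant furnished by the exponential shadowing property for $\varepsilon_m$; arrange the $\delta_m$ to be decreasing and set $\delta:=\delta_1$.

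Now take any $\delta$-limit-pseudo-orbit $\{x_n,i_n\}_{n=1}^{+\infty}$. As it is a $\delta_1$-pseudo-orbit, exponential shadowing yields a point $y$ that $(\varepsilon_1,\lambda)=(\varepsilon,\lambda)$-shadows it; this settles the shadowing half. For the limit half, I exploit that $d(f^{i_n}(x_n),x_{n+1})\to 0$: for each $m$ choose $N_m$ (increasing in $m$) so that $d(f^{i_n}(x_n),x_{n+1})<\delta_m$ for all $n\ge N_m$. Then the tail $\{x_n,i_n\}_{n\ge N_m}$, reindexed from its first block, is a $\delta_m$-pseudo-orbit, so exponential shadowing produces a point $w_m$ that $(\varepsilon_m,\lambda)$-shadows it. The crucial step is to identify $w_m$ with $f^{c_{N_m}}(y)$: along the tail the forward orbits of $f^{c_{N_m}}(y)$ and of $w_m$ both shadow the same pieces $f^{j}(x_n)$, within $\varepsilon_1$ and $\varepsilon_m$ respectively, so at every time $t\ge 0$ one has $d\bigl(f^{t}(f^{c_{N_m}}(y)),f^{t}(w_m)\bigr)<\varepsilon_1+\varepsilon_m<e$, and positive expansivity forces $w_m=f^{c_{N_m}}(y)$. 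Here one checks that the shadowing positions $c_n+j-c_{N_m}$ (over $n\ge N_m$, $0\le j\le i_n-1$) exhaust $\{0,1,2,\dots\}$, so expansivity applies to the entire forward orbit.

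With $w_m=f^{c_{N_m}}(y)$ in hand, the $(\varepsilon_m,\lambda)$-shadowing of $w_m$ upgrades the shadowing quality of $y$ itself along the tail: for $n\ge N_m$ and $0\le j\le i_n-1$,
$$d\bigl(f^{c_n+j}(y),f^{j}(x_n)\bigr)=d\bigl(f^{c_n+j-c_{N_m}}(w_m),f^{j}(x_n)\bigr)<\varepsilon_m\, e^{-\min\{j,\,i_n-1-j\}\lambda}\le\varepsilon_m.$$
Since $\varepsilon_m\to 0$ and $N_m\to\infty$, the global shadowing error $d(f^{c_n+j}(y),f^{j}(x_n))$ tends to $0$ as $c_n+j\to\infty$, which is exactly the limit-shadowing of $\{x_n,i_n\}$ by $y$; thus the single point $y$ does both jobs. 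I expect the main obstacle to be the expansivity identification $w_m=f^{c_{N_m}}(y)$, since it is what converts the fixed precision $\varepsilon$ of the global shadowing point into the vanishing precisions $\varepsilon_m$ needed for the limit, and it is exactly here that positive expansivity together with the reduction $\varepsilon<e/2$ is indispensable. The harmless reindexing of the tail pseudo-orbit and the verification that the tail positions cover all of $\mathbb{N}$ are the only further points needing care.
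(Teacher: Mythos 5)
Your proof is correct, but it follows a genuinely different route from the paper's. The paper's argument is a two-line gluing: by Remark~\ref{remark-shadowing} and Lemma~\ref{lem-shadowing-s-limit-shadowing} (Sakai's result that positive expansivity plus shadowing gives s-limit shadowing), there is a point $y_1$ that both $\varepsilon$-shadows and limit-shadows the pseudo-orbit; exponential shadowing separately gives a point $y_2$ that $(\varepsilon,\lambda)$-shadows it; since $\varepsilon<e/2$, the orbits of $y_1$ and $y_2$ stay within $2\varepsilon<e$ of each other forever, so positive expansivity forces $y_1=y_2$. You instead bypass Lemma~\ref{lem-shadowing-s-limit-shadowing} entirely and re-derive the limit-shadowing half from exponential shadowing alone, by shadowing tails of the pseudo-orbit with precisions $\varepsilon_m\to 0$ and using expansivity to identify each tail-shadowing point $w_m$ with $f^{c_{N_m}}(y)$; your identification step is sound (the tail positions $c_n+j-c_{N_m}$ do exhaust $\mathbb{N}$, and $\varepsilon_1+\varepsilon_m<2\varepsilon<e$), and the conclusion that the shadowing error of $y$ tends to zero is exactly limit-shadowing. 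What your approach buys is self-containedness --- it is in effect an inline proof of the s-limit-shadowing lemma in the exponential setting --- at the cost of length; what the paper's approach buys is brevity, by black-boxing Sakai's theorem and gluing two independently obtained shadowing points with a single application of expansivity.
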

\begin{proof}
Suppose that the expansive constant is $e$. WLOG, we assume that $0<\varepsilon<\frac{e}{2}$. By Remark \ref{remark-shadowing} and Lemma \ref{lem-shadowing-s-limit-shadowing}, $(X,  f)$ has the s-limit shadowing property, there exists $\delta_1$, such that for any $\delta_1$-limit-pseudo-orbit $\left\{x_{n}, i_{n}\right\}_{n=1}^{+\infty}$, there exists $y_1$ such that $\left\{x_{n}, i_{n}\right\}_{n=1}^{+\infty}$ is both $\varepsilon$-shadowed and limit-shadowed by $y_1$. Since $(X,f)$ has exponential shadowing property with exponent $\lambda$, there exists $\delta_2>0$ such that for any $\delta_2$-limit-pseudo-orbit $\left\{x_{n}, i_{n}\right\}_{n=1}^{+\infty}$, there exists  $y_2\in X$ such that $\left\{x_{n}, i_{n}\right\}_{n=1}^{+\infty}$ is $(\varepsilon,\lambda)$-shadowed by $y_2$. Denote $\delta=\min\{\delta_1,\delta_2\}$, by expansiveness, we finish the proof.
\end{proof}
\subsubsection{Specification Property}

\begin{Def}
Suppose that $f$ is a continuous map on a nondegenerate(i.e,with at least two points) compact metric space ($X$,$d$). We say $X$ has \textit{strong specification property}, if for
any $\varepsilon > 0$, there is a positive integer $K_\varepsilon$ such that for any integer s $\ge$ 2, any set $\{y_1,y_2,\cdots,y_s\}$ of $s$ points of $X$, and any sequence\\
$$0 = a_1 \le b_1 < a_2 \le b_2 < \cdots < a_s \le b_s$$ of 2$s$ integers with $$a_{m+1}-b_m \ge K_\varepsilon$$ for $m = 1,2,\cdots,s-1$, there is a point $x$ in $X$ such that the
following two conditions hold:
\begin{description}
	\item[(a)] $d(f^i(x),f^i(y_m)) < \varepsilon$ for all positive integers m $\le$ s and all integers $i$ with $a_m \le i \le b_m$;
	\item[(b)] $f^n(x) = x$, where $n = b_s + K_\varepsilon$.
\end{description}
If the periodicity condition (b) is omitted, we say that $X$ has \emph{specification property}. Obviously, if $X$ has specification property, $X\in ICM$.
\end{Def}

\begin{Lem}\label{mixing+shadowing=spe}\cite[Proposition 23.20]{Sig}
If $(X,f)$ is positively expansive, mixing and has  the shadowing property, then it has strong specification property.
\end{Lem}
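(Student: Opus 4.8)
The plan is to verify the strong specification property directly from its definition, using the shadowing property to produce the tracing point, topological mixing (upgraded to a single uniform transition length via compactness) to concatenate the prescribed orbit blocks into one pseudo-orbit, and positive expansiveness to force the periodicity condition (b).

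First I would fix the expansive constant $e$ and, given $\varepsilon>0$, shrink it so that $2\varepsilon<e$. Applying the shadowing property to this $\varepsilon$ yields a $\delta>0$ such that every $\delta$-pseudo-orbit is $\varepsilon$-shadowed. The next step is a uniformization of mixing: covering $X$ by finitely many balls of radius $\delta/2$ and taking the maximum of the finitely many mixing thresholds associated with the pairs of covering balls, I obtain an integer $N_0$ such that for all $x,y\in X$ and all $n\geq N_0$ one has $f^n(B(x,\delta))\cap B(y,\delta)\neq\emptyset$, where $B(\cdot,\delta)$ denotes the open $\delta$-ball. I then set $K_\varepsilon:=N_0+1$; this is the gap length demanded in the definition.

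Given the data $y_1,\dots,y_s$ and the admissible times $0=a_1\le b_1<\cdots<a_s\le b_s$ with $a_{m+1}-b_m\ge K_\varepsilon$, I build a periodic $\delta$-pseudo-orbit $\langle z_k\rangle$ of period $n=b_s+K_\varepsilon$ as follows. On each block $a_m\le i\le b_m$ put $z_i=f^i(y_m)$, so that $f(z_i)=z_{i+1}$ holds exactly there. On each gap of length $L=a_{m+1}-b_m\ge N_0+1$ (including the closing gap of length $K_\varepsilon$ from $b_s$ back to $z_0=y_1$) I use the uniform mixing to pick a point $u$ with $d(f(z_{b_m}),u)<\delta$ and $f^{L-1}(u)$ lying in $B(z_{a_{m+1}},\delta)$; filling the intermediate times with $u,f(u),\dots,f^{L-2}(u)$ makes every transition step $\delta$-admissible. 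Extending periodically yields a genuine $\delta$-pseudo-orbit.

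Shadowing this pseudo-orbit by a point $x$ gives $d(f^i(x),f^i(y_m))<\varepsilon$ on every block, which is condition (a). For condition (b) I would observe that, by periodicity of $\langle z_k\rangle$, the point $f^n(x)$ also $\varepsilon$-shadows the same pseudo-orbit, since $d(f^k(f^n(x)),z_{k+n})<\varepsilon$ and $z_{k+n}=z_k$; hence $d(f^k(x),f^k(f^n(x)))\le 2\varepsilon<e$ for all $k\ge 0$, and positive expansiveness forces $f^n(x)=x$. The only genuinely delicate point is the uniformization of mixing to a single transition length $N_0$ valid for all pairs of endpoints and all gaps at least $K_\varepsilon$; once that is in hand, the concatenation and the expansiveness argument are routine.
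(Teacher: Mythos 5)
Your proof is correct. The paper gives no argument of its own for this lemma (it only cites \cite[Proposition 23.20]{Sig}), and your construction --- uniformizing mixing over a finite $\delta/2$-cover to get a single transition length, concatenating the orbit blocks into a periodic $\delta$-pseudo-orbit, shadowing it, and using positive expansiveness on $d(f^k(x),f^k(f^n(x)))\le 2\varepsilon<e$ to force $f^n(x)=x$ --- is essentially the standard proof found in that reference, so there is nothing to correct beyond the harmless reindexing needed to match the paper's convention that a shadowing point $y$ satisfies $d(f^n(y),x_n)<\varepsilon$ for $n\ge 1$.
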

\begin{Lem}\label{lemma-full}\cite[Proposition 21.12]{Sig}
	Suppose that $(X,f)$ is a dynamical system satisfying strong specification property, then the set of invariant measures with full support is residual in $\mathcal{M}_f(X).$
\end{Lem}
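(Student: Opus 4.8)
The plan is to exhibit the full-support invariant measures as a dense $G_\delta$ subset of $\mathcal{M}_f(X)$ and then invoke the Baire category theorem. Recall that, since $X$ is a compact metric space, $\mathcal{M}_f(X)$ equipped with the weak-$*$ topology is itself compact and metrizable, hence a Baire space, and it is nonempty. Fix a countable basis $\{U_k\}_{k\ge 1}$ of nonempty open subsets of $X$. A measure $\mu$ has full support precisely when $\mu(U_k)>0$ for every $k$, so the set of full-support measures equals $\bigcap_{k\ge 1} G_k$, where $G_k:=\{\mu\in\mathcal{M}_f(X): \mu(U_k)>0\}$.

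First I would check that each $G_k$ is open. This is immediate from the Portmanteau theorem: if $\mu_n\to\mu$ weakly then $\liminf_n \mu_n(U_k)\ge \mu(U_k)$ for the open set $U_k$, so $\mu\mapsto\mu(U_k)$ is lower semicontinuous and $G_k=\{\mu:\mu(U_k)>0\}$ is open. Consequently $\bigcap_k G_k$ is a $G_\delta$ set, and it only remains to prove that each $G_k$ is dense.

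The density of $G_k$ is where the strong specification property enters, and it is the main obstacle. Given $\mu\in\mathcal{M}_f(X)$ and a basic weak-$*$ neighborhood determined by continuous functions $\varphi_1,\dots,\varphi_r$ and tolerance $\gamma>0$, I would first approximate $\mu$ by a finite convex combination $\sum_{l=1}^s t_l\nu_l$ of ergodic measures (the ergodic measures are the extreme points of $\mathcal{M}_f(X)$, so Krein--Milman makes such combinations dense, with no use of specification). By Birkhoff's ergodic theorem each $\nu_l$ admits an orbit segment $\{f^i x_l\}_{i=0}^{n_l-1}$ whose empirical average nearly integrates $\varphi_1,\dots,\varphi_r$ like $\nu_l$, and I would choose the lengths $n_l$ with proportions $n_l/\sum_m n_m$ close to $t_l$. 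Fixing a point $q\in U_k$ and an $\varepsilon$ small enough that $\varepsilon$-closeness changes each $\int\varphi_j$ by at most a prescribed amount, I would apply the strong specification property to produce a periodic point $z$ whose orbit $\varepsilon$-shadows each segment $\{x_l,n_l\}$ in turn and also spends at least one iterate within $\varepsilon$ of $q$, inserting the guaranteed gaps of length $K_\varepsilon$. Taking $\sum_l n_l$ large makes the contribution of the gaps and of the single forced visit to $q$ negligible, so the periodic measure $\mu_z=\frac1n\sum_{i=0}^{n-1}\delta_{f^i z}$ lies in the prescribed neighborhood of $\mu$; at the same time the forced visit to $U_k$ guarantees $\mu_z(U_k)>0$, so $\mu_z\in G_k$. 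Hence $G_k$ is dense.

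Finally, since $\mathcal{M}_f(X)$ is a Baire space, $\bigcap_k G_k$ is a dense $G_\delta$, i.e.\ residual, and this is exactly the set of invariant measures of full support. The delicate point throughout is the bookkeeping in the gluing step: one must balance $\varepsilon$, which controls the shadowing error in the test integrals, against the total length $\sum_l n_l$, which controls the relative weight of the gaps and of the forced visit to $U_k$, so that both the closeness to $\mu$ and the strict positivity of $\mu_z(U_k)$ hold simultaneously.
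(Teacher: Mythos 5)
The paper does not prove this lemma itself; it simply cites \cite[Proposition 21.12]{Sig}, and your argument is a correct, complete reconstruction of exactly that standard proof: openness of $\{\mu:\mu(U_k)>0\}$ via lower semicontinuity of $\mu\mapsto\mu(U_k)$ on open sets, plus density via specification-glued periodic orbits forced to visit $U_k$. The only point worth making explicit is that $\varepsilon$ must also be taken small enough that the $\varepsilon$-ball around the chosen point $q\in U_k$ is contained in $U_k$, so that the shadowing orbit genuinely enters $U_k$; with that noted, the bookkeeping you describe goes through.
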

\begin{Lem}\label{lemma-ergo}\cite[Proposition 21.9]{Sig}
	Suppose that $(X,f)$ is a dynamical system satisfying strong specification property, then $\mathcal M_f^e(X)$ is residual in $\mathcal{M}_f(X).$
\end{Lem}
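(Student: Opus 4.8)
The plan is to exhibit $\mathcal{M}_f^e(X)$ as a dense $G_\delta$ subset of $\mathcal{M}_f(X)$ in the weak$^*$ topology, since a dense $G_\delta$ set is precisely a residual set. I would lean on two standard structural facts: that $\mathcal{M}_f(X)$ is a compact, convex, metrizable subset of the locally convex space $C(X)^*$, and that its extreme points are exactly the ergodic measures. Thus the proof splits cleanly into a $G_\delta$ part and a density part, with the specification hypothesis entering only in the latter.

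For the $G_\delta$ part I would invoke the classical Choquet-theoretic fact that the set of extreme points of a compact metrizable convex subset of a locally convex space is always a $G_\delta$. Applied to $K=\mathcal{M}_f(X)$, whose extreme points are exactly $\mathcal{M}_f^e(X)$, this gives that $\mathcal{M}_f^e(X)$ is $G_\delta$ in $\mathcal{M}_f(X)$. A direct argument is also available: fixing a countable dense family $\{\varphi_k\}_{k\ge 1}\subseteq C(X)$ and applying the $L^2$ mean ergodic theorem, one checks that an invariant $\mu$ is ergodic if and only if $\tfrac{1}{n}\sum_{j=0}^{n-1}\varphi_k\circ f^j\to\int\varphi_k\,d\mu$ in $L^2(\mu)$ for every $k$; the defects $D_n(\mu,\varphi_k):=\int(\tfrac{1}{n}\sum_{j=0}^{n-1}\varphi_k\circ f^j)^2\,d\mu-(\int\varphi_k\,d\mu)^2$ are weak$^*$-continuous in $\mu$, and with a little care the condition $\lim_n D_n(\mu,\varphi_k)=0$ for all $k$ is expressible as a countable intersection of open sets.

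The density part is where the specification property is essential, and it is the main obstacle. The key intermediate statement is that the periodic-orbit measures are dense in $\mathcal{M}_f(X)$; since every periodic-orbit measure is ergodic, this immediately yields density of $\mathcal{M}_f^e(X)$. To prove it, I would fix $\mu\in\mathcal{M}_f(X)$, a finite collection of test functions, and $\varepsilon>0$. Using the ergodic decomposition of $\mu$ I would first reduce to approximating, on the test functions, a finite convex combination $\sum_{i=1}^r w_i\nu_i$ of ergodic measures. For each $\nu_i$, the Birkhoff theorem at a $\nu_i$-generic point provides a long orbit segment whose empirical measure is $\varepsilon$-close to $\nu_i$; I would take the lengths of these segments proportional to the weights $w_i$. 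The strong specification property then concatenates the segments into a single orbit that $\varepsilon$-traces each of them, and condition (b) of that property closes the orbit up into a genuine periodic orbit of controlled period. The empirical measure on this periodic orbit is weak$^*$-close to $\sum_i w_i\nu_i$, hence to $\mu$, on the chosen test functions.

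Combining the two parts, $\mathcal{M}_f^e(X)$ is simultaneously $G_\delta$ and dense in $\mathcal{M}_f(X)$, hence residual. I expect the only genuinely delicate point to be the bookkeeping in the concatenation step: one must choose the segment lengths large relative to the specification gap $K_\varepsilon$ so that the inserted spacer blocks contribute negligibly to the empirical average, while keeping the proportions matching the weights $w_i$, so that the resulting periodic measure stays uniformly close to the target convex combination.
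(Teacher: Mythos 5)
The paper does not prove this lemma itself; it cites it as \cite[Proposition 21.9]{Sig}, and your argument is essentially the proof given in that reference: the ergodic measures are exactly the extreme points of the compact metrizable convex set $\mathcal{M}_f(X)$ and hence form a $G_\delta$, while the strong specification property makes the periodic-orbit measures (which are ergodic) dense, so $\mathcal{M}_f^e(X)$ is a dense $G_\delta$. Your proof is correct, including the flagged bookkeeping in the concatenation step, and takes the same route as the cited source.
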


\subsection{Anosov diffeomorphisms and expanding maps}

Let $M$ be a compact smooth Riemann manifold without boundary. $f:M\to M$ is a diffeomorphism. A $f$-invariant set $\Lambda\subset M$ is said to be uniformly hyperbolic, if for any $x\in \Lambda$ there is a splitting of the tangent space $T_{x}M=E^{s}(x)\oplus E^{u}(x)$ which is preserved by the differential $Df$ of $f$:
\begin{equation*}
	Df(E^{s}(x))=E^{s}(f(x)),\ Df(E^{u}(x))=E^{u}(f(x)),
\end{equation*}
and there are constants $C>0$ and $0<\lambda<1$ such that for all $n\geq 0$
\begin{equation*}
	|Df^{n}(v)|\leq C\lambda^{n}|v|,\ \forall x\in \Lambda,\ v\in E^{s}(x),
\end{equation*}
\begin{equation*}
	|Df^{-n}(v)|\leq C\lambda^{n}|v|,\ \forall x\in \Lambda,\ v\in E^{u}(x).
\end{equation*}
If $M$ is a uniformly hyperbolic set, then $f$ is called an Anosov diffeomorphism. A hyperbolic set $\Lambda$ is said to be locally maximal for $f$ if there exists a neighborhood $U$ of $\Lambda$ in $M$ such that $\Lambda=\bigcap_{n=-\infty}^{+\infty}f^{n}(U)$.

When $f:X\to X$ is a homeomorphism on a compact metric space,   we say that $(X,  f)$ is \emph{expansive} if there exists a constant $e>0$ such that for any $x,  y\in X$,   $d(f^i(x),  f^i(y))> e$ for some integer $i.$
\begin{Thm}\label{thm-2}
	Every transitive Anosov diffeomorphism on a compact connected manifold or a system restricted on a mixing locally maximal hyperbolic set is expansive, mixing and has the exponential shadowing property.
\end{Thm}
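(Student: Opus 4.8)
The plan is to establish the three asserted properties—expansiveness, topological mixing, and exponential shadowing—one at a time, since each is a separate classical feature of uniform hyperbolicity and only the exponential refinement of shadowing requires real work. Throughout I would fix the hyperbolic splitting $T_xM=E^s(x)\oplus E^u(x)$ with uniform constants $C>0$ and $0<\lambda_0<1$ as in the definition, and pass to an adapted (Lyapunov) Riemannian metric in which one may take $C=1$; this makes the exponential estimates clean and supplies a uniform size $\rho>0$ for the local stable and unstable manifolds $W^s_{loc}$, $W^u_{loc}$ together with a local product structure on the hyperbolic set. Both classes in the statement consist of diffeomorphisms, so expansiveness is understood in the two-sided sense used earlier for homeomorphisms.

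For expansiveness I would argue directly from the local product structure. Choose the expansive constant $e>0$ so small that $d(x,y)\le e$ forces $y$ into the local product neighborhood of $x$, and so that $W^s_{loc}(x)\cap W^u_{loc}(x)=\{x\}$. If $x,y$ satisfy $d(f^i(x),f^i(y))\le e$ for every $i\in\mathbb{Z}$, then tracking the forward iterates $i\ge 0$ and using the contraction characterization of the local stable manifold shows $y\in W^s_{loc}(x)$, while tracking the backward iterates $i\le 0$ and using the expansion characterization shows $y\in W^u_{loc}(x)$. Hence $y=x$, which is exactly expansiveness.

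For mixing there is nothing to prove in the locally maximal case, where it is part of the hypothesis. In the transitive Anosov case I would invoke Smale's spectral decomposition. Since the whole manifold $M$ is hyperbolic and transitivity gives a dense orbit, one has $\Omega(f)=M$, a single transitive basic set. The mixing decomposition writes it as a disjoint union $\Omega_0\sqcup\cdots\sqcup\Omega_{p-1}$ of closed sets cyclically permuted by $f$, with $f^p$ mixing on each piece; being finitely many disjoint closed sets covering the connected manifold $M$, each $\Omega_i$ is also open, so connectedness forces $p=1$. Therefore $f$ is topologically mixing on $M$.

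The main work, and the step I expect to be the genuine obstacle, is the exponential shadowing property with the quantitative rate in the paper's definition, namely the decay $d(f^{c_n+j}(x),f^j(x_n))<\varepsilon\, e^{-\min\{j,\,i_n-1-j\}\lambda}$ within each block. The ordinary shadowing property is the classical Anosov--Bowen shadowing lemma for locally maximal hyperbolic sets, obtained by a graph-transform (contraction-mapping) argument in the local product neighborhoods. To upgrade it to the exponential form I would run the same construction in the adapted metric and track how the shadowing error propagates inside a single orbit segment $\{x_n,i_n\}$: the deviation in the stable direction contracts at rate $\lambda$ as $j$ increases from $0$, while the deviation in the unstable direction contracts at rate $\lambda$ as $j$ decreases from $i_n-1$, producing the $\min\{j,\,i_n-1-j\}$ exponent. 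The delicate point is the bookkeeping across the concatenation times $c_n$, where the error is allowed to jump back up to size $\varepsilon$ but must still respect the exponential profile within the next block; controlling this uniformly in $n$ with a single exponent $\lambda$ (a fixed fraction of $-\log\lambda_0$) and a single $\delta$ is where the estimate is most technical. This is precisely the content established in \cite{Tian2015-2}, which I would cite for the hyperbolic and locally maximal settings, completing the proof.
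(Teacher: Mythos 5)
Your proposal is correct and follows essentially the same route as the paper: expansiveness and mixing via the standard local-product-structure and spectral-decomposition arguments (which the paper simply cites from Katok--Hasselblatt), and the exponential shadowing property deferred to \cite{Tian2015-2}, exactly as the paper does, using the observation that an Anosov diffeomorphism makes the whole manifold a locally maximal hyperbolic set. The only difference is that you unpack the classical citations in more detail.
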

\begin{proof}
	A system restricted on a  locally maximal hyperbolic set has exponential shadowing peoperty by Proposition \cite[Proposition 2.7]{Tian2015-2} and is expansive by \cite[Corollary 6.4.10]{KatHas}.
	 By spectral decomposition, every transitive Anosov diffeomorphism on a compact connected manifold is mixing \cite[Corollary 18.3.5]{KatHas}. Finally every Anosov diffeomorphism is locally maximal, so it has the exponential shadowing property.
\end{proof}

A $C^{1}$ map $f:M\to M$ is said to be expanding if there are constants $C>0$ and $0<\lambda<1$ such that for all $n\geq 0$
\begin{equation*}
	|Df^{n}(v)|\geq C\lambda^{-n}|v|,\ \forall x\in M,\ v\in T_{x}M.
\end{equation*}
\begin{Thm}\label{thm-3}
	Every expanding map on a compact connected manifold is positively expansive and has the exponential shadowing property.
\end{Thm}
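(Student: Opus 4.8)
The plan is to derive both conclusions---positive expansiveness and exponential shadowing---from the uniform contraction of the local inverse branches of $f$, after first replacing the given metric by an adapted one. Since $|Df^n v|\ge C\lambda^{-n}|v|$ for all $n\ge0$, I fix $\beta\in(\lambda,1)$ and $N$ so large that $C^2(\beta/\lambda)^{2N}\ge 2$, and put $(\|v\|')^2:=\sum_{k=0}^{N-1}\beta^{2k}|Df^k v|^2$. A short telescoping computation (isolate the $k=0$ and $k=N$ terms, then use $\beta^{2N}|Df^N v|^2\ge 2|v|^2$) gives $\|Df_x v\|'\ge \mu\|v\|'$ for all $x\in M$ and $v\in T_xM$, with $\mu:=\beta^{-1}>1$. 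Working from now on with $\|\cdot\|'$ and its induced distance, still denoted $d$, each $Df_x$ is invertible with $\|(Df_x)^{-1}\|'\le\mu^{-1}$, so $f$ is a local diffeomorphism; by compactness of $M$ there is $\delta_0>0$ such that every $x$ admits a smooth inverse branch $\phi_x\colon B(f(x),\delta_0)\to M$ with $\phi_x(f(x))=x$ that is $\mu^{-1}$-Lipschitz (shrinking $\delta_0$ below the convexity radius upgrades the infinitesimal bound $\|(Df)^{-1}\|'\le\mu^{-1}$ to a genuine Lipschitz estimate along geodesics).

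For positive expansiveness I would take the expansive constant $e>0$ small (at most $\delta_0$ and below the least distance between two distinct $f$-preimages of a point). If $d(f^ix,f^iy)\le e$ for all $i\ge0$, then composing inverse branches along the orbit of $x$ produces, for each $i$, a $\mu^{-i}$-Lipschitz branch $\Psi_i$ of $f^{-i}$ near $f^ix$ with $\Psi_i(f^ix)=x$ and $\Psi_i(f^iy)=y$ (legitimate since $f^m y\in B(f^m x,\delta_0)$ for every intermediate $m$, and the smallness of $e$ selects the correct sheet). Hence $d(x,y)\le\mu^{-i}d(f^ix,f^iy)\le\mu^{-i}e\to0$, forcing $x=y$.

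For exponential shadowing I would use the exponent $\gamma:=\ln\mu>0$. Given $\varepsilon>0$, choose $\delta<\min\{\delta_0,(\mu-1)\varepsilon\}$. A $\delta$-pseudo-orbit $\{x_n,i_n\}$ is first flattened into an ordinary pseudo-orbit $\langle w_k\rangle$ by $w_{c_n+j}:=f^j(x_n)$ for $0\le j\le i_n-1$; its jump errors $\rho_m:=d(f(w_m),w_{m+1})$ are supported only at the block-end indices and are $<\delta$. Pulling $w_K$ back through $\phi_{w_0}\circ\cdots\circ\phi_{w_{K-1}}$ and letting $K\to\infty$ yields (the pulled-back points being Cauchy by contraction) a point $y$ whose forward orbit obeys the standard telescoping estimate $d(f^ky,w_k)\le\sum_{l\ge0}\mu^{-(l+1)}\rho_{k+l}$. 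For $k=c_n+j$ the first nonzero $\rho$ sits at the end of block $n$, at distance $i_n-1-j$, so the sum is at most $\tfrac{\delta}{\mu-1}\mu^{-(i_n-1-j)}=\tfrac{\delta}{\mu-1}e^{-(i_n-1-j)\gamma}$. Using $i_n-1-j\ge\min\{j,i_n-1-j\}$ together with $\tfrac{\delta}{\mu-1}<\varepsilon$ gives $d(f^{c_n+j}y,f^j(x_n))<\varepsilon\,e^{-\min\{j,i_n-1-j\}\gamma}$, which is exactly $(\varepsilon,\gamma)$-shadowing.

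The crux is this last estimate: for a purely expanding map the shadowing error is fed only by future jumps and therefore decays backward from each block end, so the genuinely two-sided decay $e^{-\min\{j,i_n-1-j\}\gamma}$ demanded by the definition comes for free from the one-sided backward decay that expanding maps actually produce---there being no stable direction to supply forward decay from the block starts. The remaining effort is the uniform construction of the inverse branches, namely a common domain radius $\delta_0$ and Lipschitz constant $\mu^{-1}$ measured against the Riemannian distance rather than merely the tangent-space norm, which is routine on a compact manifold.
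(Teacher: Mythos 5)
Your argument is correct, but it is a genuinely different route from the paper's: the paper disposes of this theorem in one line by citation, invoking \cite[Theorem 1.2.1]{AH} for positive expansiveness and \cite[Proposition 6.1]{HT} for the exponential shadowing property, whereas you reprove both from scratch. Your proof is the standard underlying mechanism made explicit: the adapted-norm computation $(\|Df_xv\|')^2=\beta^{-2}\bigl[(\|v\|')^2-|v|^2+\beta^{2N}|Df^Nv|^2\bigr]\ge\beta^{-2}(\|v\|')^2$ is right, the uniform $\mu^{-1}$-Lipschitz inverse branches give positive expansiveness by the usual contraction of composed branches, and the telescoping bound $d(f^ky,w_k)\le\sum_{l\ge0}\mu^{-(l+1)}\rho_{k+l}$ together with the observation that the jumps sit only at block ends yields exactly the factor $e^{-(i_n-1-j)\gamma}\le e^{-\min\{j,\,i_n-1-j\}\gamma}$ demanded by the definition --- and your closing remark that the two-sided decay comes for free from one-sided backward decay is precisely why the cited proposition holds for expanding maps. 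What the citation buys is brevity; what your proof buys is a self-contained argument with an explicit exponent $\gamma=\ln\mu$ tied to the expansion rate, plus a transparent reason the definition's symmetric decay is attainable without a stable direction. Two routine points you should still pin down: the choice of $\delta$ must also keep the pulled-back points inside the domains of the inverse branches (you need $\tfrac{\mu\delta}{\mu-1}\le\delta_0$, which $\delta<\delta_0$ alone does not guarantee when $\mu$ is close to $1$), and at the end you should transfer the conclusion from the adapted Riemannian distance back to the original one, which is immediate since the two metrics are bi-Lipschitz equivalent on a compact manifold and bi-Lipschitz changes of metric preserve both properties with the same exponent.
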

\begin{proof}
	Every expanding map  is positively expansive by \cite[Theorem 1.2.1]{AH} and has the exponential shadowing property  by \cite[Propositive 6.1]{HT}. 
\end{proof}

\subsection{Distributional Chaos and Strongly Distributional Chaos}\label{subsection-chaos}
The notion of chaos, as well as Li-Yorke pair and scrambled set, was invented in 1975 by Li and Yorke in the context of continuous transformations of the interval \cite{LY}. Since then various extensions definition of chaos have been developed. One style bases on topological perspective, which specifies how the scrambled set is placed in the space, such as dense chaos \cite{Genric Chaos} and generic chaos \cite{Genric Chaos,S Generic Ghaos}. Another one derives from statistical perspective by adding some statistical restriction to the definition of Li-Yorke pair, which results in distributional chaos.

\begin{Def}
A pair $x,y\in X$ is DC1-scrambled if the following two conditions hold:
$$\forall t>0,\ \limsup_{n\to \infty}\frac{1}{n}|\{i\in [0,n-1]:\ d(f^i(x),f^i(y))<t\}|=1.$$
$$\exists t_0>0,\ \liminf_{n\to \infty}\frac{1}{n}|\{i\in [0,n-1]:\ d(f^i(x),f^i(y))<t_0\}|=0.$$
\end{Def}
In other words, the orbits of $x$ and $y$ are arbitrarily close with upper density one, but for some distance, with lower density zero. A set $S$ is called a DC1-scrambled set if any pair of its distinct points is DC1-scrambled. A map $f$ is called distributional chaotic of type 1(DC1 chaotic for brevity), if there is an uncountable DC1-scrambled set $S\subseteq X$. In this paper, we focus on DC1 chaotic. Readers can refer to \cite{Dwic,SS,SS2} for the definition of DC2 and DC3 if necessary.

Now we recall from \cite{HT} a kind of chaos, strongly distributional chaos, which is stronger than usual distributional chaos and Li-Yorke chaos.
For any positive integer $n$, points $x,y \in M$ and $t \in \mathbb{R}$ let
\begin{equation*}
	\Phi _{xy}^{(n)}(t,f)=\frac{1}{n}|\{0\leq i \leq n-1:d(f^{i}x,f^{i}y)<t\}|,
\end{equation*}
where $|A|$ denotes the cardinality of the set $A$. Let us denote by $\Phi _{xy}$ the following function:
\begin{equation*}
	\Phi _{xy}(t,f)=\liminf_{n \to \infty}\Phi _{xy}^{(n)}(t,f).
\end{equation*}
Define $\mathcal{A}=\{\alpha(\cdot):\alpha\ \text{is a nondecreasing map form}\  \mathbb{N}\ \text{to}\ [0,+\infty),\ \lim\limits_{n\to \infty}\alpha(n)=+\infty\ \text{and}\  \lim\limits_{n\to\infty}\frac{\alpha(n)}{n}=0\}$.
For any positive integer $n$, points $x,y \in M$, $t \in \mathbb{R}$ and $\alpha\in\mathcal{A},$ let
\begin{equation*}
	\Phi _{xy}^{(n)}(t,f,\alpha)=\frac{1}{n}|\{1\leq i \leq n:\sum_{j=0}^{i-1}d(f^{j}x,f^{j}y)<\alpha(i)t\}|.
\end{equation*}
Let us denote by $\Phi _{xy}^{*}(t,f,\alpha)$ the following functions:
\begin{equation*}
	\Phi _{xy}^{*}(t,f,\alpha)=\limsup_{n \to \infty}\Phi _{xy}^{(n)}(t,f,\alpha).
\end{equation*}
\begin{Def}
	A pair $x,y\in X$ is $\alpha$-DC1-scrambled if the following two conditions hold:
	\begin{equation*}
		\Phi _{xy}(t_{0},f)=0\ \mathrm{for}\ \mathrm{some}\ t_{0}>0\ \mathrm{and}
	\end{equation*}
	\begin{equation*}
		\Phi _{xy}^{*}(t,f,\alpha)=1\ \mathrm{for}\ \mathrm{all}\ t>0.
	\end{equation*}
	A set $S$ is called a $\alpha$-DC1-scrambled set if any pair of distinct points in $S$ is $\alpha$-DC1-scrambled.
	A subset $Y\subset M$ is said to be strongly distributional chaotic if it has an uncountable $\alpha$-DC1-scrambled set for any $\alpha\in\mathcal{A}$. 
\end{Def}

\begin{Rem}
	Strongly distributional chaos  is stronger than  distributional chaos of type 1 (see \cite[Proposition 2.5]{HT}).
\end{Rem}

\subsection{Basic facts for statistical $\omega$-sets}
For any $\mu\in \mathcal M_f(X)$, we denote $S_\mu=supp(\mu)=\{x\in X|\ \mu(U)>0\ $for any neighborhood $U$ of $x\}$ the support of $\mu.$ Given $x\in X$, denote $V_f(x)\subseteq \mathcal M_f(X)$ the set of all accumulation points of the empirical measures
$$
\mathcal{E}_n (x):=\frac1{n}\sum_{i=0}^{n-1}\delta_{f^i (x)},
$$
where $\delta_x$ is the Dirac measure concentrate on $x$. As is known,   $V_f(x)$ is  a non-empty compact connected subset of $\mathcal M_f(X)$ \cite{Sig}. {For any two positive integers $a_k<b_k$,   denote
$[a_k,  b_k]=\{a_k,  a_k+1,  \cdots,  b_k\}$ and $[a_k,  b_k)=[a_k,  b_k-1],  (a_k,  b_k)=[a_k+1,  b_k-1],  (a_k,  b_k]=[a_k+1,  b_k]$.
A point $x$ is called {\it quasi-generic} for some measure $\mu,  $ if there is a sequence of positive integer intervals $I_k=[a_k,  b_k)$ with $b_k-a_k\to\infty$ such that
$$\lim_{k\rightarrow\infty}\frac{1}{b_k-a_k}\sum_{j=a_k}^{b_k-1}\delta_{f^j(x)}=\mu$$
in weak$^*$ topology.
}
Let $V_f^*(x)=\{\mu\in M(f,  X): \,  x \text{ is quasi-generic for } \mu\}$.
 This concept is from \cite{Fur} and from there it is known $V_f^*(x)$ is always nonempty,
 compact and connected.   Note that $V_f(x)\subseteq V_f^*(x).  $

\begin{Prop}\cite[Theorem 1.4]{DT}\label{density-measure-prop}
Suppose $(X,  f)$ is a topological dynamical system. 
\begin{description}
  \item[(1)] For any $x\in X,   $ $\omega_{\underline{d}}(x)= \bigcap_{\mu\in V_f(x)} S_\mu$.
  \item[(2)] For any $x\in X,   $  $\omega_{\overline{d}}(x)=\overline{\bigcup_{\mu\in V_f(x)} S_\mu}\neq \emptyset$.
  \item[(3)] For any $x\in X,   $  $\omega_{B_*}(x)= \bigcap_{\mu\in V^*_f(x)} S_\mu =  {\bigcap_{\mu\in \mathcal M_f(\omega_f(x))}S_{\mu}}= {\bigcap_{\mu\in \mathcal M_f^e(\omega_f(x))}S_{\mu}}.$ If
   $\omega_{B_*}(x)\neq \emptyset$,   then $ \omega_{B_*}(x)$ is minimal.
  \item[(4)] For any $x\in X,   $ $\omega_{B^*}(x)= \overline{\bigcup_{\mu\in V^*_f(x)} S_\mu}=\overline{\bigcup_{\mu\in \mathcal M_f(  \omega_f(x))}S_{\mu}}=\overline{\bigcup_{\mu\in \mathcal M_f^e(\omega_f(x))}S_{\mu}}\neq \emptyset;$

\item[(5)] For any invariant measure $\mu$ and $\mu$ a.  e.   $x\in X,   $
  $$ \omega_{\underline{d}}(x)= \omega_{\overline{d}}(x)= \omega_{B^*}(x)= \omega_f(x)=S_\mu.  $$

\end{description}

 \end{Prop}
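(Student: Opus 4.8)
The plan is to set up a single dictionary between densities of visiting-time sets and the mass that limit measures assign to balls, and then apply it uniformly to all four statistical notions. The basic observation is that for any open set $U$ the normalized counting function of $N(x,U)$ agrees with $\mathcal E_n(x)(U)$ up to an error tending to $0$, so the upper (resp.\ lower) density of $N(x,U)$ equals $\limsup_n\mathcal E_n(x)(U)$ (resp.\ $\liminf_n\mathcal E_n(x)(U)$), while the Banach upper (resp.\ lower) density equals the sup (resp.\ inf) of $\mu(U)$ over interval averages with diverging length. Under this translation, $y\in\omega_\xi(x)$ becomes a statement about the $\xi$-density of $N(x,V_\varepsilon(y))$ for every $\varepsilon>0$, and $y\in S_\mu$ becomes $\mu(V_\varepsilon(y))>0$ for every $\varepsilon>0$. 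The bridge between the two sides is the portmanteau theorem: along any subsequence with $\mathcal E_{n_k}(x)\to\mu$ one has $\mu(U)\le\liminf_k\mathcal E_{n_k}(x)(U)$ for open $U$ and $\mu(C)\ge\limsup_k\mathcal E_{n_k}(x)(C)$ for closed $C$, and I will repeatedly pass between a ball $V_\varepsilon(y)$ and a slightly smaller closed ball $\overline{V_{\varepsilon'}(y)}\subseteq V_\varepsilon(y)$ to convert open-set estimates into closed-set ones and back.

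For (1) and (2) the relevant measures are the Cesàro accumulation points $V_f(x)$. To prove $\omega_{\underline d}(x)=\bigcap_{\mu\in V_f(x)}S_\mu$, if $y$ lies on the left then $\liminf_n\mathcal E_n(x)(V_{\varepsilon'}(y))>0$, so every $\mu\in V_f(x)$ satisfies $\mu(V_\varepsilon(y))\ge\mu(\overline{V_{\varepsilon'}(y)})\ge\limsup_k\mathcal E_{n_k}(x)(V_{\varepsilon'}(y))>0$; conversely, if some $\varepsilon$ gives $\liminf_n\mathcal E_n(x)(V_\varepsilon(y))=0$, weak$^*$ compactness of $\mathcal M(X)$ produces $\mu\in V_f(x)$ with $\mu(V_{\varepsilon'}(y))=0$, so $y\notin S_\mu$. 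The identity $\omega_{\overline d}(x)=\overline{\bigcup_{\mu\in V_f(x)}S_\mu}$ is dual: each $y\in S_\mu$ gives $\limsup_n\mathcal E_n(x)(V_\varepsilon(y))\ge\mu(V_\varepsilon(y))>0$, and since $\omega_{\overline d}(x)$ is readily checked to be closed this yields $\supseteq$; for $\subseteq$ one argues by contradiction, using that if $V_\varepsilon(y)$ misses every support then $\mu(\overline{V_{\varepsilon'}(y)})=0$ for all $\mu\in V_f(x)$, contradicting $\limsup_n\mathcal E_n(x)(V_{\varepsilon'}(y))>0$ via a convergent subsequence. Nonemptiness in (2) is immediate from $V_f(x)\neq\emptyset$ and $S_\mu\neq\emptyset$.

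For (3) and (4) the identical dictionary applies with $\mathcal E_n(x)$ replaced by interval averages $\frac1{|I_k|}\sum_{i\in I_k}\delta_{f^i(x)}$, whose accumulation points are exactly $V_f^*(x)$; this gives $\omega_{B_*}(x)=\bigcap_{\mu\in V_f^*(x)}S_\mu$ and $\omega_{B^*}(x)=\overline{\bigcup_{\mu\in V_f^*(x)}S_\mu}$ verbatim. The remaining equalities with $\mathcal M_f(\omega_f(x))$ and $\mathcal M_f^e(\omega_f(x))$ require two inclusions: every quasi-generic measure is $f$-invariant (Krylov--Bogolyubov) and concentrated on $\omega_f(x)$ (only finitely many orbit points escape any neighborhood of $\omega_f(x)$, so their interval frequency vanishes as $|I_k|\to\infty$), giving $V_f^*(x)\subseteq\mathcal M_f(\omega_f(x))$; and conversely every \emph{ergodic} measure $\nu$ on $\omega_f(x)$ is quasi-generic for $x$, giving $\mathcal M_f^e(\omega_f(x))\subseteq V_f^*(x)$. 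I expect this last inclusion to be the main obstacle: one takes a $\nu$-generic point $z\in S_\nu\subseteq\omega_f(x)$, uses $z\in\omega_f(x)$ to find $m_k\to\infty$ with $f^{m_k}(x)\to z$, and then, by uniform continuity, approximates $\nu$ by the averages of $x$ over blocks $[m_k,m_k+L_k)$ for suitably chosen $L_k\to\infty$ — a diagonal catch-and-concatenate argument. Once these inclusions hold, ergodic decomposition (each $S_\mu$ is the closure of the union of the supports of its ergodic components) collapses the three unions in (4) and the three intersections in (3) to one value. Finally, if $\bigcap_{\mu\in\mathcal M_f(\omega_f(x))}S_\mu\neq\emptyset$ there cannot be two distinct, hence disjoint, minimal subsets of $\omega_f(x)$, since measures supported on them would have disjoint supports; thus there is a unique minimal set $M$. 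As every support $S_\mu$ contains a minimal set, it contains $M$, so $M\subseteq\bigcap_\mu S_\mu$; taking $\mu$ supported on $M$ (so $S_\mu=M$ by minimality) gives the reverse inclusion, proving $\omega_{B_*}(x)=M$ is minimal.

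Part (5) I would reduce to the ergodic case: by Birkhoff's theorem $\mu$-a.e.\ point of an ergodic $\mu$ is generic, so $\mathcal E_n(x)\to\mu$, whence $V_f(x)=\{\mu\}$ and, since $x\in S_\mu$ and the orbit equidistributes, $\omega_f(x)=S_\mu$. Part (1) then gives $\omega_{\underline d}(x)=S_\mu$, while $V_f^*(x)\subseteq\mathcal M_f(S_\mu)$ together with (4) gives $\omega_{B^*}(x)\subseteq S_\mu$; the chain $\omega_{\underline d}(x)\subseteq\omega_{\overline d}(x)\subseteq\omega_{B^*}(x)$ from \eqref{omega-relation} then squeezes all of them to $S_\mu=\omega_f(x)$. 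The general invariant case is recovered by ergodic decomposition, replacing $S_\mu$ by the support of the ergodic component for which $x$ is generic.
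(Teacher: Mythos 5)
This proposition is imported verbatim from \cite[Theorem 1.4]{DT}; the present paper gives no proof of it, so there is no internal argument to compare yours against. Your proof is correct and follows what is essentially the standard (and, as far as the cited source goes, the intended) route: the dictionary between densities of $N(x,V_\varepsilon(y))$ and values of empirical, respectively interval-averaged, measures on balls; the portmanteau inequalities combined with the open/closed ball trick; the sandwich $\mathcal M_f^e(\omega_f(x))\subseteq V_f^*(x)\subseteq \mathcal M_f(\omega_f(x))$, whose nontrivial half is exactly the catch-and-concatenate construction of quasi-generic intervals from a generic point of an ergodic measure on $\omega_f(x)$; and ergodic decomposition to collapse the three intersections in (3) and the three unions in (4). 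The uniqueness-of-the-minimal-set argument for the final claim of (3) is also sound. One caveat worth recording: item (5) as literally stated fails for non-ergodic $\mu$ (take $\mu=\frac12(\delta_p+\delta_q)$ for two distinct fixed points $p,q$; then $\omega_f(x)=\{x\}\neq S_\mu$ for $\mu$-a.e.\ $x$). Your proof correctly establishes the ergodic case, and your closing remark --- that the general case replaces $S_\mu$ by the support of the ergodic component at $x$ --- is the accurate form of the statement and the one actually needed elsewhere in the paper.
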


\section{Technique Lemmas}\label{sectionLemma}
\subsection{Distributional Chaos in Saturated Sets}
Consider a dynamical system $(X,f).$ Given $x\in X$, it is known that  $V_f(x)$ is  a non-empty compact connected subset of $\mathcal M_f(X)$ \cite{Sig}. So for any non-empty compact connected subset $K$ of $\mathcal M_f(X)$,   it is logical to define the following set
$$G_K:=\{x\in X~|~V_f(X)=K\}.  $$
$G_K$ is known as the saturated set of $K$. Particularly,   if $K=\{\mu\}$ for some ergodic measure $\mu$,   then $G_{\mu}$ is just the generic points of $\mu$.   The existence of saturated sets are studied by Sigmund in \cite{SigSpe}. The Bowen entropy of saturated sets are studied by Pfister and Sullivan in \cite{PS2}. 
Here we consider distributional chaos in saturated sets. Let $\Lambda\subseteq X$ be a closed invariant subset and $K$ is a non-empty compact connected subset of $\mathcal M_f(\Lambda)$. Define
$$G_K^{\Lambda}:=G_K\cap\{x\in X~|~\omega_f(x)=\Lambda\}.  $$
We say a pair $p,q\in X$ is {\it distal} if $$\liminf_{i\to\infty}d(f^i(p),f^i(q))>0.$$ Obviously, $\inf\{d(f^i(p),f^i(q))\mid i\in\mathbb{N}\}>0$ if $p,q$ is distal. We say a subset $M\subseteq X$ has distal pair if there are distinct $p,q\in M$ such that $p,q$ is distal.
\begin{mainlemma}\label{DC1-omega-Lemma}
	Suppose that $(X,f)$ is positively expansive and  transitive, $\Lambda\in ICM$. Let $K\subseteq\mathcal M_f(\Lambda)$ be  non-empty compact connected. If there is a $\mu\in K$ such that $\mu=\theta\mu_1+(1-\theta)\mu_2\ (\mu_1=\mu_2\mathrm{\ are\ legal})$, where $\theta\in[0,1]$, and $G_{\mu_1}$, $G_{\mu_2}$ both have distal pair. Then
	\begin{description}
		\item[(a)] if $(X,f)$ has the shadowing property, then for any non-empty open set $U\subseteq X$, there is an uncountable DC1-scrambled set $S\subseteq G_K^{\Lambda}\cap U$. Particularly, $S\subseteq G_K^{\Lambda}\cap NR$ when $U\subseteq X\setminus \Lambda$;
		\item[(b)] if further $(X,f)$ has exponential shadowing property, then for any $\alpha\in\mathcal{A}$ and for any non-empty open set $U\subseteq X$ there is an uncountable $\alpha$-DC1-scrambled set $S^{\alpha}\subseteq G_K^{\Lambda}\cap U.$ 
	\end{description}
\end{mainlemma}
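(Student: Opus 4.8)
The plan is to realize each point of the scrambled set as the ($s$-limit) shadowing point of one explicitly built pseudo-orbit, organized into a tower of blocks together with a Cantor coding, so that the structural demands and the scrambling are met simultaneously. First I would fix the data: distal pairs $p_1\neq q_1\in G_{\mu_1}$ and $p_2\neq q_2\in G_{\mu_2}$ with a common gap $\inf_i d(f^ip_r,f^iq_r)\ge 3\tau>0$ $(r=1,2)$, and a sequence $(\nu_j)_{j\ge1}$ in $K$ that is dense in $K$ and slowly varying, $\rho(\nu_j,\nu_{j+1})\to0$ for a fixed metric $\rho$ on $\mathcal M(X)$; such a sequence exists because $K$ is compact and connected. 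Using that $(X,f)$ has shadowing and $\Lambda\in ICM$, each $\nu_j$ (and $\mu$) is approximated by the empirical measure of a long finite orbit segment obtained by shadowing a pseudo-orbit inside $\Lambda$. I would then concatenate such segments into one infinite $\delta$-limit-pseudo-orbit and invoke Lemma~\ref{lem-shadowing-s-limit-shadowing} (for (a)) or Lemma~\ref{exponential-s-limit-shadowing} (for (b)) to produce, for each admissible code, a genuine orbit tracking it; positive expansiveness makes this orbit unique, so the coding map is well defined and injective on scrambled pairs.

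The three structural requirements are enforced block by block. To force $V_f=K$, I keep every block-measure inside $K$ and let consecutive block-measures be $\rho$-close; since $K$ is closed, every accumulation point of the empirical measures is a limit of block-measures, hence in $K$, while density of $(\nu_j)$ yields $K\subseteq V_f$. The delicate point is the scrambling insertions below: a separation block must average to a measure of $K$, and I arrange it to average to $\mu=\theta\mu_1+(1-\theta)\mu_2\in K$ by spending a $\theta$-fraction near the $\mu_1$-generic segment and a $(1-\theta)$-fraction near the $\mu_2$-generic segment -- which is exactly why distal pairs are needed in both $G_{\mu_1}$ and $G_{\mu_2}$, and why transitions to and from $\mu$ are routed through a short $\rho$-chain inside the connected set $K$. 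To force $\omega_f=\Lambda$, I interleave, with frequency tending to $0$, short excursions that $\eps$-shadow $ICM$-chains through a fixed dense sequence of $\Lambda$; the vanishing frequency leaves $V_f$ untouched, the limit-shadowing clause pushes the orbit toward $\Lambda$ so that $\omega_f\subseteq\Lambda$, and the dense excursions give $\omega_f\supseteq\Lambda$. Finally, prepending an $\eps$-chain from an interior point of $U$ (available since $(X,f)$ is transitive with shadowing) places every shadowing point in $U$; when $U\subseteq X\setminus\Lambda$, $\omega_f(x)=\Lambda$ disjoint from $U\ni x$ gives $x\in NR$.

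For the chaos I index the family by an uncountable set $T\subseteq\{0,1\}^{\mathbb N}$ any two distinct members of which differ in infinitely many coordinates, and let each coordinate control infinitely many separation blocks, so that distinct codes are separated on infinitely many, arbitrarily late blocks. Each block alternates a \emph{common} part (identical reference for all codes) with a \emph{coding} part (the $\mu$-segment in its $p$-version if the controlling bit is $0$, its $q$-version if it is $1$). On the common parts the limit-shadowing clause forces $d(f^ix_\xi,f^ix_\eta)\to0$ along late blocks, so for every $t>0$ the close-times have upper density $1$; on the coding parts where the codes disagree, distality gives $d(f^ix_\xi,f^ix_\eta)\ge\tau$, so with $t_0=\tau/2$ the close-times have lower density $0$, once the block lengths are chosen to grow fast enough for each block type to dominate the density at its right endpoint. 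This yields an uncountable DC1-scrambled set inside $G_K^\Lambda\cap U$, proving (a).

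For (b) I fix $\alpha\in\mathcal{A}$ and use the exponential clause of Lemma~\ref{exponential-s-limit-shadowing}: on a common block the distances to the reference decay like $\eps\, e^{-\min\{j,\,i_n-1-j\}\lambda}$, so the contribution of each common block to $\sum_{j<i}d(f^jx_\xi,f^jx_\eta)$ is bounded independently of the block length, and the cumulative sum up to the right endpoint of the $k$-th block is $O(k)$. Choosing the block lengths so that the corresponding time $i$ is enormous compared with $k$, and using $\alpha(i)\to\infty$, gives $\sum_{j<i}d(f^jx_\xi,f^jx_\eta)<\alpha(i)t$ for every $t>0$ along these endpoints, i.e.\ $\Phi^{*}_{x_\xi x_\eta}(t,f,\alpha)=1$; the separation estimate $\Phi_{x_\xi x_\eta}(t_0,f)=0$ is unchanged. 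I expect the main obstacle to be the simultaneous bookkeeping of all length scales: the block lengths must grow fast enough that each block type dominates the density at its endpoint and that $\alpha(i)$ dwarfs the cumulative-distance sum, yet the transitions between $\nu_j$-blocks, $\mu$-separation blocks and $\Lambda$-density excursions must stay $\rho$-small and confined to $K$ so that $V_f=K$ survives -- reconciling these competing constraints within one tower is the technical heart of the argument.
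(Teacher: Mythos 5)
Your proposal follows essentially the same route as the paper: a tower of blocks whose empirical measures run through a dense, slowly varying sequence in $K$, coding blocks that realize $\mu=\theta\mu_1+(1-\theta)\mu_2$ in two distal versions built from the distal pairs of $G_{\mu_1}$ and $G_{\mu_2}$, a prepended transitive segment to land in $U$, and s-limit (resp.\ exponential) shadowing plus positive expansiveness to turn each code into a unique tracing point; the verification of $V_f=K$, $\omega_f=\Lambda$, and the DC1/$\alpha$-DC1 estimates is the same Pfister--Sullivan-style bookkeeping.

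Two remarks on where you diverge from the paper. First, the paper does not use separate low-frequency excursion blocks to force $\omega_f=\Lambda$: Lemmas \ref{lemma-single} and \ref{lemma-double} produce, for each target measure, a \emph{periodic} point $p$ with $\mathcal E_n(p)$ close to the measure \emph{and} $d_H(orb(p),\Lambda)<\eps$, so every block is simultaneously a measure block and a $\Lambda$-density block. Your variant with vanishing-frequency excursions also works, but you must then check that the excursions neither perturb $V_f$ (they don't, by vanishing frequency) nor break the upper-density-one closeness estimate (they don't, if they are inserted only inside the common part of the tower); the paper's device avoids this extra check. Second, in part (b) your claim that ``the cumulative sum up to the right endpoint of the $k$-th block is $O(k)$'' is false: each coding block on which the two codes disagree contributes at least $\tau$ times a positive proportion of its (enormous) length, so the cumulative sum at the start of the $k$-th common block is comparable to the total elapsed time, not to $k$. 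The argument survives only because of the order of quantifiers: once everything before the common block is fixed, the bound on the cumulative sum is a fixed constant $C_k\lesssim(\text{elapsed time})\cdot\diam X$, and since $\alpha(i)\to\infty$ there is a threshold $c_k$ (exactly the paper's (\ref{PS-N-add-2})) beyond which $\alpha(i)t>C_k$; one then chooses the common block so long that the indices past $c_k$ dominate the density (the paper's (\ref{PS-N-add})). Your closing sentence (``$\alpha(i)$ dwarfs the cumulative-distance sum'') states the right requirement, but the $O(k)$ bound you offer in support of it must be replaced by this two-step choice, otherwise the estimate $\Phi^{*}_{x_\xi x_\eta}(t,f,\alpha)=1$ is not justified.
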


\subsubsection{Some Lemmas}
We write $\mathbb{N}=\{0,1,2,\cdots\}$ and $\mathbb{N}^+=\{1,2,\cdots\}$.  The cardinality of a finite set $\Lambda$ is denoted by $|\Lambda|$. We set
$$
\langle f,\mu \rangle\ :=\ \int_Xfd\mu.
$$
There exists a countable and separating set of continuous function $\{f_1,f_2,\cdots\}$ with $0\leq f_k(x)\leq 1$, and such that
$$
d(\mu,\nu)\ :=\ \parallel\mu-\nu\parallel\ :=\ \sum_{k\geq 1}2^{-k}\mid\langle f_k,\mu-\nu \rangle\mid
$$
defines a metric for the weak*-topology on $\mathcal{M}_f(x)$. We refer to \cite{PS2} and use the metric on $X$ as following defined by Pfister and Sullivan.
$$
d(x,y) := d(\delta_x,\delta_y),
$$
which is equivalent to the original metric on $X$. Readers will find the benefits of using this metric in our proof later.
\begin{Lem}\label{measure distance}
For any $\varepsilon > 0,\delta >0$ and two sequences $\{x_i\}_{i=0}^{n-1},\{y_i\}_{i=0}^{n-1}$ of $X$ such that $d(x_i,y_i)<\varepsilon$ holds for any $i\in [0,n-1]$, then for any $J\subseteq \{0,1,\cdots,n-1\}$, $\frac{n-|J|}{n}<\delta$, one has:
\begin{description}
	\item[(a)] $d(\frac{1}{n}\sum_{i=0}^{n-1}\delta_{x_i},\frac{1}{n}\sum_{i=0}^{n-1}\delta_{y_i})<\varepsilon.$
	\item[(b)] $d(\frac{1}{n}\sum_{i=0}^{n-1}\delta_{x_i},\frac{1}{|J|}\sum_{i\in J}\delta_{y_i})<\varepsilon+2\delta.$
\end{description}
\end{Lem}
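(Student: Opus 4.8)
The plan is to exploit directly the Pfister--Sullivan metric, in which the distance between two points of $X$ is literally the distance between the associated Dirac masses, so that $d(x_i,y_i)=\sum_{k\ge 1}2^{-k}|f_k(x_i)-f_k(y_i)|$. This identity is what makes both estimates essentially formal, and no auxiliary dynamics is needed.

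For part (a), I would set $\mu=\frac1n\sum_{i=0}^{n-1}\delta_{x_i}$ and $\nu=\frac1n\sum_{i=0}^{n-1}\delta_{y_i}$ and expand $d(\mu,\nu)=\sum_{k\ge1}2^{-k}|\langle f_k,\mu-\nu\rangle|$. Since $\langle f_k,\mu-\nu\rangle=\frac1n\sum_{i=0}^{n-1}(f_k(x_i)-f_k(y_i))$, applying the triangle inequality inside the average and then interchanging the two absolutely convergent summations converts the bound into $\frac1n\sum_{i=0}^{n-1}\sum_{k\ge1}2^{-k}|f_k(x_i)-f_k(y_i)|=\frac1n\sum_{i=0}^{n-1}d(x_i,y_i)$. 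Each summand is $<\varepsilon$ by hypothesis, so the whole expression is $<\varepsilon$.

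For part (b), I would insert $\nu$ as an intermediary and use the triangle inequality $d(\mu,\nu_J)\le d(\mu,\nu)+d(\nu,\nu_J)$, where $\nu_J=\frac1{|J|}\sum_{i\in J}\delta_{y_i}$. Part (a) bounds the first term by $\varepsilon$, so it remains to show $d(\nu,\nu_J)<2\delta$. The key step is to estimate the $k$-th coefficient uniformly in $k$: writing $\langle f_k,\nu-\nu_J\rangle=\bigl(\tfrac1n-\tfrac1{|J|}\bigr)\sum_{i\in J}f_k(y_i)+\tfrac1n\sum_{i\notin J}f_k(y_i)$ and using $0\le f_k\le 1$ together with $|\{0,\dots,n-1\}\setminus J|=n-|J|<\delta n$, each coefficient is bounded in absolute value by $\tfrac{n-|J|}{n}+\tfrac{n-|J|}{n}=2\tfrac{n-|J|}{n}<2\delta$. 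Since $\sum_{k\ge1}2^{-k}=1$, summing over $k$ gives $d(\nu,\nu_J)<2\delta$, and hence $d(\mu,\nu_J)<\varepsilon+2\delta$.

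The content is purely the two elementary estimates above, so I do not expect any real obstacle. The only point that requires a little care is the splitting in (b), which records the two independent sources of error when passing from the full normalized average to the average over $J$: the reweighting of the retained terms (the factor $\tfrac1n-\tfrac1{|J|}$) and the discarding of the $n-|J|$ omitted terms. Each contributes at most $\tfrac{n-|J|}{n}<\delta$ in this metric, which is exactly why the factor $2\delta$ appears.
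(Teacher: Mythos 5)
Your proof is correct; the paper itself gives no argument for this lemma (it is stated as "easy to be verified"), and your computation with the Pfister--Sullivan metric --- averaging the pointwise bound for (a), and splitting the error in (b) into the reweighting term $\bigl(\tfrac1n-\tfrac1{|J|}\bigr)\sum_{i\in J}f_k(y_i)$ and the discarded term $\tfrac1n\sum_{i\notin J}f_k(y_i)$, each bounded by $\tfrac{n-|J|}{n}<\delta$ --- is exactly the intended verification. No gaps.
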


Lemma \ref{measure distance} is easy to be verified and shows us that if any two orbit of $x$ and $y$ in finite steps are close in the most of time, then the two empirical measures induced by $x,y$ are also close.

Given a metric space $(X,d).$  We denote the Hausdorff distance between two nonempty subsets of $X,$ $A$ and $B,$  by $$d_H(A,  B):=\max\set{\sup_{x\in A}\inf_{y\in B}d(x,  y),\sup_{y\in B}\inf_{x\in A}d(y, x)  }.$$ A point $x \in X$ is $periodic$ if there is a $n\in \mathbb{N}$ such that $f^n(x)=x$. We denote $Per$ the set of periodic points. We denote $Per_n:=\{x\in X\mid f^nx=x\}$.
\begin{Lem}\label{lemma-single}
Suppose that $(X,f)$ is positively expansive and has the shadowing property, $\Lambda\in ICM$. Then for any $\eps>0$, any $x\in\Lambda$, any $\mu\in\mathcal M_f(\Lambda)$ and its neighborhood $F_{\mu}$, there exists an $M\in \mathbb{N}$ such that for any $n>M$, there exists $p\in B(x,\eps)$ satisfying
\begin{description}
\item[(a)] $p\in Per_n,\ \ \mathcal E_n(p)\in F_\mu$;

\item[(b)] $d_H(orb(p), \Lambda)<\eps$.
\end{description}

\end{Lem}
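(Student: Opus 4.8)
The plan is to realize property (a) and (b) by building, for every large $n$, a genuinely $n$-periodic $\delta$-pseudo-orbit inside $\Lambda$ that begins at $x$, carries empirical measure close to $\mu$, and is $\eps$-dense in $\Lambda$, and then to shadow it and promote the shadowing point to a true periodic point via positive expansivity. Fix the expansive constant $e$ and choose a target accuracy $\eta<\min\{\eps/2,\,e/2\}$ small enough that $\eta$-closeness in the weak$^*$ metric keeps a measure inside $F_\mu$; let $\delta>0$ be the constant furnished by the shadowing property for $\eta$. Throughout I use the Pfister--Sullivan metric $d(u,v)=d(\delta_u,\delta_v)$ on $X$, so that Lemma \ref{measure distance} applies directly.

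First I would approximate $\mu$ by a well-equidistributed finite orbit block. Using the ergodic decomposition, write $\mu$ within half of the $F_\mu$-tolerance as a convex combination $\sum_{j=1}^{s}a_j\nu_j$ of ergodic measures $\nu_j\in\mathcal M_f^e(\Lambda)$ with rational weights $a_j=p_j/q$. For each $j$ pick a point $z_j$ generic for $\nu_j$; since $z_j\in\mathrm{supp}(\nu_j)\subseteq\Lambda$ and $\Lambda$ is invariant, the orbit arc $\langle z_j,\dots,f^{a_j m-1}z_j\rangle$ stays in $\Lambda$, and for all large $m$ its empirical measure is within $\eta$ of $\nu_j$. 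Concatenating these arcs for $j=1,\dots,s$ (for $m$ a large multiple of $q$), joined by short $\delta$-chains inside $\Lambda$ supplied by internal chain mixing (Definition \ref{Def-ICM}), produces a $\mu$-block of length $m+O(1)$ whose empirical measure lies inside the $F_\mu$-tolerance of $\mu$ for every sufficiently large $m\in q\mathbb N$.

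Next I assemble the full periodic pseudo-orbit. Fix a finite $\eta$-net $\{w_1,\dots,w_r\}$ of the compact set $\Lambda$. Using that $\Lambda\in ICM$, I link by $\delta$-chains inside $\Lambda$ the point $x$ through $w_1,\dots,w_r$ to $z_1$, insert the $\mu$-block, and connect its endpoint back to $x$; extending periodically yields a $\delta$-pseudo-orbit $\langle x_i\rangle$ with $x_1=x$ and $x_{i+n}=x_i$. The net part and all connectors have total length bounded by a constant $L$ independent of $n$, because ICM permits each connector to have any prescribed length above a fixed threshold. Given $n>M$, I take the largest admissible $m\in q\mathbb N$ with $m\le n-L$ and absorb the residual into the length of the closing connector. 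Then $m/n\to1$ as $n\to\infty$, so the empirical measure of the whole pseudo-orbit $\frac1n\sum_{i=1}^{n}\delta_{x_i}$ stays inside $F_\mu$ once $n$ is large, while the net points $w_j$ all appear among the $x_i$.

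Finally I shadow. The shadowing property gives $y\in X$ with $d(f^{i-1}y,x_i)<\eta$ for all $i$. Since $\langle x_i\rangle$ is $n$-periodic, $f^n y$ also $\eta$-shadows it, so $d(f^i y,f^i(f^n y))<2\eta<e$ for every $i\ge0$; positive expansivity forces $f^n y=y$, i.e. $p:=y\in Per_n$. Now $d(p,x)=d(y,x_1)<\eta<\eps$ gives $p\in B(x,\eps)$; Lemma \ref{measure distance}(a) applied to the $n$-orbit of $p$ and the pseudo-orbit yields $d(\mathcal E_n(p),\frac1n\sum_{i=1}^{n}\delta_{x_i})<\eta$, whence $\mathcal E_n(p)\in F_\mu$; and since each $f^i p$ is within $\eta$ of a point of $\Lambda$ while the orbit of $p$ passes within $\eta$ of every $w_j$, one obtains $d_H(orb(p),\Lambda)<\eps$. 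I expect the main obstacle to be the bookkeeping in the assembly step: arranging a single construction that realizes \emph{every} period $n>M$ while keeping the $\mu$-carrying block asymptotically full-length, so that the empirical-measure estimate in (a) holds simultaneously for all large $n$; the periodicity upgrade from expansivity and the density/closeness estimates are then routine.
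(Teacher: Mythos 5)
Your proposal is correct and follows essentially the same route as the paper's proof: approximate $\mu$ by a rational convex combination of ergodic measures via the ergodic decomposition, splice long orbit arcs of generic points together with a tour of a finite net of $\Lambda$ using $\delta$-chains of adjustable length supplied by internal chain mixing, absorb the length residue $n\bmod q$ into one connector, shadow the resulting $n$-periodic pseudo-orbit, and upgrade the shadowing point to a genuine point of $Per_n$ by positive expansivity. The only (immaterial) differences from the paper are bookkeeping choices, such as which connector absorbs the residual length.
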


\begin{proof}
Since $F_\mu$ is a neighborhood of $\mu$, there is an $a>0$ such that $B(\mu,a)\subseteq F_\mu$. By the ergodic decomposition Theorem, there exists a finite convex combination of ergodic measure $\sum_{i=1}^mc_i\nu_i\in B(\mu, a/5)$. Moreover, by the denseness of rational numbers, we can choose each $c_i=\frac{b_i}{b}$ with $b_i\in\mathbb{N}$ and $\sum_{i=1}^mb_i=b.$ It is known that $\nu_i(G_{\nu_i})=1$. So we can choose $\{p_i\}_{i=1}^{m}\subseteq \Lambda$ with $p_i\in G_{\nu_i},\ i=1,2,\cdots,m.$  Then there is a common $N_c\in\mathbb{N}$ such that for any $n>N_c$ and any $i\in\{1,2,\cdots,m\}$, $\mathcal E_n(p_i)\in B(\nu_i,a/5)$.

Let $\eps^*=\mathrm{min}\{\frac{\eps}{2},\frac{a}{5}, \frac{e}{2}\},$ where $e$ is the expansive constant. By shadowing property, there is a $\delta\in(0,\eps^*)$ such that any {$\delta$-pseudo-orbit} can be {$\eps^*$-shadowed} by some point in $X$. Since $\Lambda$ is compact we can choose finite ball open balls $\{B(x_i,\delta)\}_{i=1}^s$ which covers $\Lambda$ with $\{x_i\}_{i=1}^s\subseteq \Lambda$. Note that $\Lambda\in ICM$, then there exists an $L\in\mathbb{N}$ such that for any $l\geq L$, there is a $\delta$-chain $\mathfrak{C}_{yz}^l$ connecting $y$ and $z$ for any $y,z\in\{x_i\}_{i=1}^s\cup\{x\}\cup\{p_i\}_{i=1}^m.$ For any $p\in\Lambda$, we define
$$x_{(p,n)}:=x_{\mathrm{min}\{i\in[1,s]|f^n(p)\in B(x_i,\delta)\}}.$$
\noindent (Taking the minimum index here has no explicit meanings. We just want to fix a point in $\{x_i\}_{i=1}^s$.)

\noindent Now, let $k\in\mathbb{N}$ large enough such that
\begin{equation}\label{single-condition-1}
kb_i\geq N_c,\ \ 1\leq i \leq m; \ \ \ \text{and}
\end{equation}
\begin{equation}\label{single-condition-2}
\frac{(s+m+1)L+b}{(s+m+1)L+kb+b}<\frac{a}{5}.
\end{equation}

\noindent Let $M=(s+m+1)L+bk$, then for any $n>M$, $n=M+tb+c,$ where $c<b,t\in\mathbb{N}.$ Here we construct $\mathfrak{C}=\mathfrak{C}_{xx}^{n}\mathfrak{C}_{xx}^{n}\cdots$, where
$$\mathfrak{C}_{xx}^{n}=\mathfrak{C}_{xx_1}^{L+c}\mathfrak{C}_{x_1x_2}^{L}\cdots\mathfrak{C}_{x_{s-1}x_s}^{L}\mathfrak{C}_{x_sp_1}^{L}orb(p_1,(k+t)b_1)\mathfrak{C}_{x_{(p_1,(k+t)b_1)}p_2}^{L}orb(p_2,(k+t)b_2)$$
$\ \ \ \ \  \ \ \ \ \ \ \ \ \ \ \ \ \mathfrak{C}_{x_{(p_2,(k+t)b_2)}p_3}^{L}\cdots orb(p_m,(k+t)b_m)\mathfrak{C}_{x_{(p_m,(k+t)b_m)}x}^{L}.$

\noindent It is easy to check $\mathfrak{C}$ is a {$\delta$-pseudo-orbit}. By shadowing property, $\mathfrak{C}$ can be {$\eps^*$-shadowed} by a point $p\in X$. Note that $\mathfrak{C}$ is $n$ periodic. So $d(f^i(p),f^{i+n}(p))<\eps^*+\eps^*=2\eps^*<e$ for any $i\in\mathbb{N}$, which implies $f^n(p)=p$ by expansiveness. By (\ref{single-condition-1}), (\ref{single-condition-2}) and Lemma \ref{measure distance}, we have
\begin{align*}
d(\mathcal E_n(p),\sum_{i=1}^mc_i\nu_i) &\leq d(\mathcal E_n(p),\delta_{\mathfrak{C}_{xx}^n})+d(\delta_{\mathfrak{C}_{xx}^n},\sum_{i=1}^mc_i\nu_i)\\
&\leq \eps^*+d(\delta_{\mathfrak{C}_{xx}^n},\sum_{i=1}^mc_i\nu_i)\\
&\leq \eps^*+d(\delta_{\mathfrak{C}_{xx}^n},\sum_{i=1}^{m}\frac{b_i}{b}\delta_{orb(p_i,(k+t)b_i)})+d(\sum_{i=1}^{m}\frac{b_i}{b}\delta_{orb(p_i,(k+t)b_i)},\sum_{i=1}^mc_i\nu_i)\\
&< \eps^*+2*\frac{a}{5}+\frac{a}{5}\\
&< \frac{4a}{5}.
\end{align*}

\noindent Thus $\mathcal E_n(p)\subseteq B(\mu,a)\subseteq F_\mu$.

Note that $\mathfrak{C}_{xx}^n$ contains $\{x_i\}_{i=1}^{s}$ and $\{B(x_i,\delta)\}_{i=1}^{s}$ covers $\Lambda$. So $\{B(f^i(p),\delta+\eps^*)\}_{i=0}^{n-1}$ covers $\Lambda$. On the other side, $\{f^i(p)\}_{i=0}^{n-1}\subseteq B(\Lambda,\eps^*)$ and $p\in Per_n$. So
$d_H(orb(p),\Lambda)<\eps^*+\delta<2\eps^*\leq \eps.$
\end{proof}

\begin{Lem}\label{lemma-double}
Suppose that $(X,f)$ is positively expansive and has the shadowing property, $\Lambda\in ICM$.  Suppose there are $\mu_1,\mu_2\in\mathcal M_f(\Lambda)$ such that $G_{\mu_1},G_{\mu_2}$ has distal pair $(g_1,e_1),(g_2,e_2)$ respectively. Let $$\zeta=\mathrm{min}\{\inf\{d(f^i(g_1),f^i(e_1))|\ i\in\mathbb{N}\},\inf\{d(f^i(g_2),f^i(e_2))|\ i\in\mathbb{N}\}\}.$$ Then for any $\tau>0$, any $\eps\in(0,\zeta/2)$, any $\theta\in[0,1]$ and any $x\in\Lambda$, there exists an $M\in \mathbb{N}$ such that for any $n>M$, there exists $p^1,p^2\in B(x,\eps)$ satisfying
\begin{description}
\item[(a)] $p^i\in Per_n,\ \ \mathcal E_n(p^i)\in B(\theta\mu_1+(1-\theta)\mu_2,\eps),\ i=1,2$;

\item[(b)] $d_H(orb(p^i), \Lambda)<\eps,\ i=1,2$;

\item[(c)] $\frac{|\ \{i\in[0,n-1]: d(f^i(p^1),f^i(p^2))<\zeta-\eps\}\ |}{n}<\tau$.
\end{description}

\end{Lem}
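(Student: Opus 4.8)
The plan is to run the one-point construction of Lemma~\ref{lemma-single} twice in parallel over a common block skeleton, feeding the first copy the distal representatives $g_1,g_2$ and the second copy their partners $e_1,e_2$. Because $g_1,e_1\in G_{\mu_1}$ and $g_2,e_2\in G_{\mu_2}$ are all generic (so that long orbit segments of each have empirical measure close to the relevant $\mu_j$), both periodic points produced will carry empirical measures near $\theta\mu_1+(1-\theta)\mu_2$, giving (a); the distality of the pairs, propagated through the shadowing, will then force the two orbits apart on the bulk of the period, giving (c).

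First I would fix $\eps^*=\min\{\eps/2,\,e/2\}$ (with $e$ the expansive constant, so that $\zeta-2\eps^*\geq\zeta-\eps>0$), choose $\delta\in(0,\eps^*)$ from the shadowing property, cover $\Lambda$ by balls $\{B(x_i,\delta)\}_{i=1}^s$ with $x_i\in\Lambda$, and use $\Lambda\in ICM$ to fix $L$ so that any two points of $\{x_i\}_{i=1}^s\cup\{x\}\cup\{g_1,g_2,e_1,e_2\}$ are joined by a $\delta$-chain of any prescribed length $\geq L$. Fixing a rational $b_1/b\approx\theta$ and using genericity to pick $N_c$ with $\mathcal E_\ell(g_j),\mathcal E_\ell(e_j)$ close to $\mu_j$ whenever $\ell\geq N_c$, I would build, for each $n>M$, two $\delta$-pseudo-orbits with \emph{identical length data}: a common covering prefix $\mathfrak{C}_{xx_1}^{L+c}\mathfrak{C}_{x_1x_2}^{L}\cdots\mathfrak{C}_{x_{s-1}x_s}^{L}$, then a length-$L$ connector into the first representative, a block $orb(\cdot,\ell_1)$ with $\ell_1\approx\theta n$, a length-$L$ connector into the second representative, a block $orb(\cdot,\ell_2)$ with $\ell_2\approx(1-\theta)n$, and a closing length-$L$ connector back to $x$; the first pseudo-orbit uses $g_1,g_2$ in the two blocks, the second uses $e_1,e_2$. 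Crucially the connectors are taken of equal length $L$ in both copies (only their endpoints differ), so the block boundaries fall at identical time indices. Shadowing each pseudo-orbit and using expansiveness exactly as in Lemma~\ref{lemma-single} produces $p^1,p^2\in B(x,\eps)\cap Per_n$; conditions (a) and (b) then follow verbatim from that lemma, via Lemma~\ref{measure distance} for the empirical measures and the covering prefix for the Hausdorff estimate.

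The genuinely new point is (c), and this is where the common skeleton is decisive. For any time index $i$ lying in the first block, $p^1$ is $\eps^*$-shadowing $f^{j}(g_1)$ while $p^2$ is $\eps^*$-shadowing $f^{j}(e_1)$ for the \emph{same} local index $j$; since $d(f^{j}(g_1),f^{j}(e_1))\geq\zeta$, the triangle inequality yields $d(f^i(p^1),f^i(p^2))\geq\zeta-2\eps^*\geq\zeta-\eps$, and symmetrically on the second block using the pair $(g_2,e_2)$. Hence every index with $d(f^i(p^1),f^i(p^2))<\zeta-\eps$ must lie in the covering prefix or in one of the three connectors, a set of total length at most $(s+3)L+c$, which is bounded independently of $n$. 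Since the two main blocks absorb almost all of the period as $n$ grows, choosing $M$ large forces $((s+3)L+c)/n<\tau$, which is (c).

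The main obstacle I anticipate is the simultaneous bookkeeping: one must pick $\ell_1,\ell_2$ and the remainder $c<b$ so that the total length is \emph{exactly} $n$ for every $n>M$, while keeping $\ell_1/n\approx\theta$ and $\ell_2/n\approx1-\theta$ accurate enough for (a) and the connector total a small enough fraction for (c). This is the same remainder-absorbing device as in Lemma~\ref{lemma-single}, now carried out identically on both copies so that alignment of block boundaries is preserved. The degenerate cases $\theta\in\{0,1\}$ are harmless: a single block survives, but it still shadows a genuine distal pair, so the separation estimate behind (c) persists.
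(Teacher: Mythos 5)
Your proposal is correct and follows essentially the same route as the paper's proof: two $\delta$-pseudo-orbits built over an identical block skeleton (common covering prefix, equal-length connectors, main blocks of lengths $\approx\theta n$ and $\approx(1-\theta)n$ carrying $g_1,g_2$ in one copy and $e_1,e_2$ in the other), shadowed and closed up by expansiveness, with (a), (b) inherited from Lemma~\ref{lemma-single} and (c) obtained by the triangle inequality $d(f^i(p^1),f^i(p^2))\geq\zeta-2\eps^*\geq\zeta-\eps$ on the aligned main blocks, whose complement has size $O_n(1)$. The paper implements the remainder-absorbing bookkeeping exactly as you describe, writing $\theta=r_1/r$ rational and $n=M+tr+c$ with $c<r$ absorbed into the first connector.
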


\begin{proof}
By the denseness of rational numbers, we can assume $r_1,r_2\in\mathbb{N}, r=r_1+r_2$ and $\theta=\frac{r_1}{r}$. Since $g_i,e_i\in G_{\mu_i},i=1,2$, there is a common $N_c\in\mathbb{N}$ such that for any $n\geq N_c$, $\mathcal E_n(g_i)\in B(\mu_i,\eps/4)$ and $\mathcal E_n(e_i)\in B(\mu_i,\eps/4),i=1,2$. Let $\eps^*=\mathrm{min}\{\frac{\eps}{2},\frac{e}{2}\}$. By shadowing property, there is a $\delta\in(0,\eps^*)$ such that any {$\delta$-pseudo-orbit} can be {$\eps^*$-shadowed} by some point in $X$. Since $\Lambda$ is compact we can choose finite ball open balls $\{B(x_i,\delta)\}_{i=1}^s$ which covers $\Lambda$ with $\{x_i\}_{i=1}^s\subseteq \Lambda$. Note that $\Lambda\in ICM$, then there exists an $L\in\mathbb{N}$ such that for any $l\geq L$, there is a $\delta$-chain $\mathfrak{C}_{yz}^l$ connecting $y$ and $z$ for any $y,z\in\{x_i\}_{i=1}^s\cup\{x\}\cup\{g_i\}_{i=1}^2\cup\{e_i\}_{i=1}^2.$

Now, let $k\in\mathbb{N}$ large enough such that
\begin{equation}\label{double-condition-1}
kr_i\geq N_c,\ i=1,2; \ \ \ \ \ \text{and}
\end{equation}
\begin{equation}\label{double-condition-2}
\frac{(s+3)L+r}{(s+3)L+kr+r}<\mathrm{min}\{\tau,\frac{\eps}{4}\}.
\end{equation}

\noindent Let $M=(s+3)L+kr$, then for any $n>M$, $n=M+tr+c,$ where $c<r,t\in\mathbb{N}.$ Here we construct $\mathfrak{C}_1=\mathfrak{C}_{xx,1}^{n}\mathfrak{C}_{xx,1}^{n}\cdots$ and $\mathfrak{C}_2=\mathfrak{C}_{xx,2}^{n}\mathfrak{C}_{xx,2}^{n}\cdots$, where
$$\mathfrak{C}_{xx,1}^{n}=\mathfrak{C}_{xx_1}^{L+c}\mathfrak{C}_{x_1x_2}^{L}\cdots\mathfrak{C}_{x_{s-1}x_s}^{L}\mathfrak{C}_{x_sg_1}^{L}orb(g_1,(k+t)r_1)\mathfrak{C}_{x_{(g_1,(k+t)r_1)}g_2}^{L}orb(g_2,(k+t)r_2)\mathfrak{C}_{x_{(g_2,(k+t)r_2)}x}^{L};$$
$$\mathfrak{C}_{xx,2}^{n}=\mathfrak{C}_{xx_1}^{L+c}\mathfrak{C}_{x_1x_2}^{L}\cdots\mathfrak{C}_{x_{s-1}x_s}^{L}\mathfrak{C}_{x_se_1}^{L}orb(e_1,(k+t)r_1)\mathfrak{C}_{x_{(e_1,(k+t)r_1)}e_2}^{L}orb(e_2,(k+t)r_2)\mathfrak{C}_{x_{(e_2,(k+t)r_2)}x}^{L}.$$

\noindent It is easy to check $\mathfrak{C}_1,\mathfrak{C}_2$ are both {$\delta$-pseudo-orbit}. By shadowing property, $\mathfrak{C}_i$ can be {$\eps^*$-shadowed} by a point $p^i\in X,i=1,2$. With the similar analysis in the proof of Lemma \ref{lemma-single}, item $\mathbf{(a)}$ and $\mathbf{(b)}$ are satisfied. Observing $\mathfrak{C}_{xx,1}^{n}$ and $\mathfrak{C}_{xx,2}^{n}$, one can find that
\begin{equation}\label{distance-1}
d(f^{j+(s+1)L+c}(p^1),f^j(g_1))<\eps^*,\ j\in[0,(k+t)r_1-1],
\end{equation}
\begin{equation}\label{distance-2}
d(f^{j+(s+2)L+c+(k+t)r_1}(p^1),f^j(g_2))<\eps^*,\ j\in[0,(k+t)r_2-1];
\end{equation}
\noindent and
\begin{equation}\label{distance-3}
d(f^{j+(s+1)L+c}(p^2),f^j(e_1))<\eps^*,\ j\in[0,(k+t)r_1],
\end{equation}
\begin{equation}\label{distance-4}
d(f^{j+(s+2)L+c+(k+t)r_1}(p^2),f^j(e_2))<\eps^*,\ j\in[0,(k+t)r_2-1].
\end{equation}

\noindent Note that $\zeta=\mathrm{min}\{\inf\{d(f^i(g_1),f^i(e_1))|\ i\in\mathbb{N}\},\inf\{d(f^i(g_2),f^i(e_2))|\ i\in\mathbb{N}\}\}$. So, with the combination of (\ref{distance-1}), (\ref{distance-2}), (\ref{distance-3}) and (\ref{distance-4}), one has
\begin{equation}\label{distance-result-1}
d(f^{j+(s+1)L+c}(p^1),f^{j+(s+1)L+c}(p^2))>\zeta-2\eps^*\geq\zeta-\eps,\ j\in[0,(k+t)r_1-1],
\end{equation}
\noindent and
\begin{equation}\label{distance-result-2}
d(f^{j+(s+2)L+c+(k+t)r_1}(p^1),f^{j+(s+2)L+c+(k+t)r_1}(p^2))>\zeta-2\eps^*\geq\zeta-\eps,\ j\in[0,(k+t)r_2-1].
\end{equation}
\noindent By (\ref{double-condition-2}), we have
\begin{equation}\label{double-guji-result}
\frac{(k+t)r}{n}>1-\tau.
\end{equation}
Combining (\ref{distance-result-1}), (\ref{distance-result-2}) and (\ref{double-guji-result}), one has $\frac{|\ \{i\in[0,n-1]:d(f^i(p^1),f^i(p^2))<\zeta-\eps\}\ |}{n}<\tau$.
\end{proof}

\subsubsection{Proof of Lemma \ref{DC1-omega-Lemma}}
\textbf{(a):} By the proof of \cite[Theorem 5.1]{PS2}, there is a sequence $\{\alpha_i\}_{i=1}^{\infty}$ in $K$ such that for any $n\in\mathbb{N}$, $\overline{\{\alpha_i:i\in\mathbb{N},i>n\}}=K$ and
\begin{equation}\label{equation-A}
	\lim_{i\to\infty}d(\alpha_i,\alpha_{i+1})=0.
\end{equation}
\noindent For any $\eps>0$ and any $t\in\mathbb{N}^+$, there exists a sequence $\{\beta_i\}_{i=1}^{l}\subseteq K$ such that $\beta_1=\mu,\beta_l=\alpha_t$ and $d(\beta_i,\beta_{i+1})<\eps,i=1,2,\cdots,l-1$ since $K$ is connected. So we can assume that such sequence $\{\alpha_i\}_{i=1}^{\infty}$ has a subsequence $\{\alpha_{i_k}\}_{k=1}^{\infty}$ satisfying
$$i_{k+1}-i_k\geq 2,\ \text{and}\ \alpha_{i_k}=\alpha_{i_k+1}=\mu\ \text{for\ any}\ k\in\mathbb{N}^+.$$
\noindent(If not, add the sequence $\{\beta_i\}_{i=1}^{l}$ to the original sequence.) Define the index set
$$S_1:=\{i_k\ |\ k\in\N\ \},\ \ \ S_2:=\mathbb{N}^+\setminus S_1.$$
Let $(g_1,e_1)$, $(g_2,e_2)$ be the distal pair of $G_{\mu_1}$, $G_{\mu_2}$ respectively and $$\zeta=\mathrm{min}\{\inf\{d(f^i(g_1),\\
f^i(e_1))|\ i\in\mathbb{N}\},\inf\{d(f^i(g_2),f^i(e_2))|\ i\in\mathbb{N}\}\}.$$ Fix a point $z\in U$, there is a $\rho>0$ such that $B(z,\rho)\subseteq U$. Let $\eps_1=\mathrm\{\rho/2,\zeta\}$ and $\eps_{i+1}=\eps_i/2$ for $i\geq 1$. 
By Lemma \ref{lem-shadowing-s-limit-shadowing}, $(X,  f)$ has the s-limit shadowing property.
Then there is a $\delta_i\in(0,\eps_i)$ such that 
any $\delta_i$-limit-pseudo-orbit can be $\eps_i$-limit-shadowed by some point in $X,$ and $\lim_{i\to\infty}\delta_i=0.$
Now, fix a point $z_{\Lambda}\in\Lambda$. There is an $m\in\mathbb{N}$ and a $p_0\in B(z,\eps_1)$ such that $f^m(p_0)\in B(z_{\Lambda},\delta_1/2)$ since $(X,f)$ is transitive. Let $\{\tau_i\}_{i=1}^{\infty}$ be a strictly decreasing sequence with $\lim_{i\to\infty}\tau_i=0$.

For any $i\in S_2$, using Lemma \ref{lemma-single} on $\delta_i/2,z_{\Lambda},\alpha_i,B(\alpha_i,\delta_i/2)$,  we get a positive integer sequence $\{M_i\}_{i\in S_2}$. For any $i\in S_1$, using Lemma \ref{lemma-double} on $\tau_i,\delta_i/2,\theta,z_{\Lambda}$, we get a positive integer sequence $\{M_i\}_{i\in S_1}$.
Let $\{n_i\}_{i=1}^{\infty}$ be an integer sequence with
$$n_i>M_i,\ i\geq 1.$$
Then for $i\in S_2$, we get $p_i\in Per_{n_i}$ by Lemma \ref{lemma-single}. For $i\in S_1$, we get $p_i^1\in Per_{n_i}$ and $p_i^2\in Per_{n_i}$ by Lemma \ref{lemma-double}. Choosing a strictly increasing integer sequence $\{N_i\}_{i=1}^{\infty}$ with
\begin{equation}\label{PS-N-1}
n_{i+1}\leq \tau_i\sum_{j=1}^{i}n_jN_j,\ \ \ \ \text{and}
\end{equation}
\begin{equation}\label{PS-N-2}
\sum_{j=1}^{i-1}n_jN_j\leq\tau_i\sum_{j=1}^{i}n_jN_j.
\end{equation}

\noindent For any $\xi=\{\xi_1,\xi_2,\cdots\}\in\{1,2\}^{\infty}$, we denote
$$\mathfrak{C}(\xi)=orb(p_0,m)\underbrace{\mathfrak{C}_1\cdots\mathfrak{C}_1}_{N_1}\underbrace{\mathfrak{C}_2\cdots\mathfrak{C}_2}_{N_2}\underbrace{\mathfrak{C}_3\cdots\mathfrak{C}_3}_{N_3}\cdots,$$
\noindent where
\begin{equation}\label{pesudo-construction}
\mathfrak{C}_i=
\left\{
             \begin{array}{lr}
             orb(p_i,n_i)\ \text{if}\ i\in S_2;\\
             orb(p_i^{\xi_{[k]}},n_i)\ \text{if}\ i=i_k\ \text{for some}\ k.
             \end{array}
\right.
\end{equation}
\noindent where $[k]:=\text{the\ minimum\ positive\ integer\ of}\ \{k-\sum_{i=0}^{t}i\}_{\{t\in\mathbb{N}\}}$. One can check that $\mathfrak{C}(\xi)$ is a {$\delta_1$-limit-pseudo-orbit}. So there is a point $S_{\mathfrak{C}(\xi)}\in X$ which $\eps_1$-limit-shadows $\mathfrak{C}(\xi)$. Denote $$S:=\bigcup_{\xi\in\{1,2\}^{\infty}}S_{\mathfrak{C}(\xi)}.$$
We complete this proof by proving the following four facts:
\begin{description}
	\item[(1)] $S\subseteq G_K$;
	\item[(2)] For any $y\in S$, $\omega_f(y)=\Lambda$;
	\item[(3)] For any distinct $x,y\in S$, $x,y$ is a DC1-scrambled pair;
	\item[(4)] $S_{\mathfrak{C}(\xi)}\neq S_{\mathfrak{C}(\eta)}$ if $\xi\neq \eta$, which implies $S$ is uncountable.
\end{description}

(1): The method in the proof of item (1) is mainly referring to \cite{PS2}. Firstly, we define two stretched sequences $\{n_l^{\prime}\}_{l=1}^{\infty}$ by
$$n_l^{\prime}=n_k\ \ \text{if}\ \sum_{j=1}^{k-1}N_j+1\leq l \leq\sum_{j=1}^{k}N_j,$$
and $\{\alpha_l^{\prime}\}_{l=1}^{\infty}$ by
$$\alpha_l^{\prime}:=\alpha_1\ \ \text{if}\ l\leq m;$$
$$\alpha_l^{\prime}:=\alpha_k\ \ \text{if}\ \sum_{j=1}^{k-1}n_jN_j+1\leq l-m \leq\sum_{j=1}^{k}n_jN_j.$$
\noindent The sequence $\{\alpha_l^{\prime}\}$ has the same limit point set as the sequence $\{\alpha_k\}$. If for any $y\in S$,
$$\lim_{n\to\infty}d(\mathcal E_n(y),\alpha_n^{\prime})=0,$$
then the two sequence $\{\mathcal E_n(y)\}$ and $\{\alpha_n^{\prime}\}$ have the same limit point set and thus $S\subseteq G_K$. Let $M_k:=m+\sum_{i=1}^{k}n_i^{\prime}$. Because of (\ref{PS-N-1}) and the definition of $\{\alpha_m^{\prime}\}$, it is sufficient to show that
$$\lim_{k\to\infty}d(\mathcal E_{M_k}(y),\alpha_{M_k}^{\prime})=0.$$
Suppose that $M_k=m+\sum_{l=1}^{j}n_lN_l+n_{j+1}\mathfrak{N}$ with $i\in\N$ and $1\leq \mathfrak{N}\leq N_{j+1}$, hence $\alpha_{M_k}^{\prime}=\alpha_{j+1}.$ By  (\ref{PS-N-2}), we have
\begin{equation}\label{G_K-equation-1}
	\begin{split}
		d(\mathcal E_{M_k}(y),\alpha_{M_k}^{\prime}) \leq &\frac{m+\sum_{l=1}^{j-1}n_lN_l}{M_k}d(\mathcal E_{m+\sum_{l=1}^{j-1}n_lN_l}(y),\alpha_{M_k}^{\prime})\\
		&+\frac{n_jN_j}{M_k}d(\mathcal E_{n_jN_j}(f^{m+\sum_{l=1}^{j-1}n_lN_l}(y)),\alpha_{M_k}^{\prime})\\
		&+\frac{n_{j+1}\mathfrak{N}}{M_k}d(\mathcal E_{n_{j+1}\mathfrak{N}}(f^{m+\sum_{l=1}^{j}n_lN_l}(y)),\alpha_{M_k}^{\prime})\\
		\leq &\tau_j+d(\mathcal E_{n_jN_j}(f^{m+\sum_{l=1}^{j-1}n_lN_l}(y)),\alpha_{j+1})+d(\mathcal E_{n_{j+1}\mathfrak{N}}(f^{m+\sum_{l=1}^{j}n_lN_l}(y)),\alpha_{j+1}).
	\end{split}
\end{equation}
By Lemma \ref{lemma-single} and Lemma \ref{lemma-double}, we have 
\begin{equation}\label{G_K-equation-2}
	\begin{split}
		d(\mathcal E_{n_jN_j}(f^{m+\sum_{l=1}^{j-1}n_lN_l}y),\alpha_{j+1})&\leq d(\mathcal E_{n_jN_j}(f^{m+\sum_{l=1}^{j-1}n_lN_l}y),\sum_{i=1}^{N_j}\frac{1}{N_j}\mathcal E_{n_j}(p_j))\\
		&\ \ \ \ +d(\sum_{i=1}^{N_j}\frac{1}{N_j}\mathcal E_{n_j}(p_j),\alpha_j)+d(\alpha_j,\alpha_{j+1})\\
		&\leq d(\mathcal E_{n_jN_j}(f^{m+\sum_{l=1}^{j-1}n_lN_l}y),\sum_{i=1}^{N_j}\frac{1}{N_j}\mathcal E_{n_j}(p_j))+\delta_j/2+d(\alpha_j,\alpha_{j+1}).
	\end{split}
\end{equation}
and 
\begin{equation}\label{G_K-equation-4}
	\begin{split}
		d(\mathcal E_{n_{j+1}\mathfrak{N}}(f^{m+\sum_{l=1}^{j}n_lN_l}(y)),\alpha_{j+1})&\leq d(\mathcal E_{n_{j+1}\mathfrak{N}}(f^{m+\sum_{l=1}^{j}n_lN_l}(y)),\sum_{i=1}^{\mathfrak{N}}\frac{1}{\mathfrak{N}}\mathcal E_{n_{j+1}}(p_{j+1}))\\
		&\ \ \ \ +d(\sum_{i=1}^{\mathfrak{N}}\frac{1}{\mathfrak{N}}\mathcal E_{n_{j+1}}(p_{j+1}),\alpha_{j+1})\\
		&\leq d(\mathcal E_{n_{j+1}\mathfrak{N}}(f^{m+\sum_{l=1}^{j}n_lN_l}(y)),\sum_{i=1}^{\mathfrak{N}}\frac{1}{\mathfrak{N}}\mathcal E_{n_{j+1}}(p_{j+1}))+\delta_{j+1}/2.
	\end{split}
\end{equation}
Note that $y$ limit-shadows some $\mathfrak{C}(\xi)$, so by Lemma \ref{measure distance}
\begin{equation}\label{G_K-equation-3}
\lim_{j\to\infty}d(\mathcal E_{n_jN_j}(f^{m+\sum_{l=1}^{j-1}n_lN_l}(y)),\sum_{i=1}^{N_j}\frac{1}{N_j}\mathcal E_{n_j}(p_j))=0,
\end{equation}
and 
\begin{equation}\label{G_K-equation-5}
	\lim_{j\to\infty}d(\mathcal E_{n_{j+1}\mathfrak{N}}(f^{m+\sum_{l=1}^{j}n_lN_l}(y)),\sum_{i=1}^{\mathfrak{N}}\frac{1}{\mathfrak{N}}\mathcal E_{n_{j+1}}(p_{j+1}))=0.
\end{equation}
(\ref{G_K-equation-1})-(\ref{G_K-equation-5}) and (\ref{equation-A}) result in
$$\lim_{k\to\infty}d(\mathcal E_{M_k}(y),\alpha_{M_k}^{\prime})=0.$$
Thus we have proved $S\subseteq G_K$.

(2): For any $y=S_{\mathfrak{C}(\xi)}$, $y$ limit-shadows some $\mathfrak{C}(\xi)$. Thus $\omega_f(y)=\omega(\mathfrak{C}(\xi))$. Note that $\lim_{j\to\infty}d_H(\mathfrak{C}_j, \Lambda)=0$, so $\omega_f(y)=\Lambda$.

(3): For distinct $x,y\in S$, we can assume that $x=S_{\mathfrak{C}(\xi)},$ and $y=S_{\mathfrak{C}(\eta)},$ where $\mathfrak{C}(\xi)=\langle u_1,u_2,\cdots\rangle,$
$\mathfrak{C}(\eta)=\langle z_1,z_2,\cdots\rangle$ and $\xi\neq\eta$. Note that $x,y$ limit-shadows $\mathfrak{C}(\xi), \mathfrak{C}(\eta)$ respectively. So for any $t>0$, there exists a $k_0\in\mathbb{N}$ such that for any $n\geq m+\sum_{j=1}^{i_{k_0}}n_jN_j$
\begin{equation}\label{pesudo-equal-1}
d(f^{n-1}(x),u_n)<t/2,\ \ \ d(f^{n-1}(y),z_n)<t/2.
\end{equation}
\noindent Note that (\ref{pesudo-construction}), then for any $k\in\mathbb{N}$ and any $s\in[\sum_{j=1}^{i_{k}}n_jN_j+1,\sum_{j=1}^{i_{k}+1}n_jN_j]$,
\begin{equation}\label{pesudo-equal-2}
u_{m+s}=z_{m+s}.
\end{equation}
By (\ref{pesudo-equal-1}) and (\ref{pesudo-equal-2}), we have for any $k\geq k_0$ and any $s\in[\sum_{j=1}^{i_{k}}n_jN_j+1,\sum_{j=1}^{i_{k}+1}n_jN_j]$,
\begin{equation}\label{eq-1}
d(f^{m+s-1}(x),f^{m+s-1}(y))<t.
\end{equation}
Combining (\ref{eq-1}) and (\ref{PS-N-2}),
\begin{align*}
&\limsup_{n\to \infty}\frac{1}{n}|\{j\in [0,n-1]:\ d(f^j(x),f^j(y))<t\}|\\
\ge & \limsup_{k\to \infty}\frac{1}{m+\sum_{j=1}^{i_{k}+1}n_jN_j}|\{j\in [0,m+\sum_{j=1}^{i_{k}+1}n_jN_j-1]:\ d(f^jx,f^jy)<t\}|\\
\ge & \limsup_{k\to \infty}\frac{n_{i_{k}+1}N_{i_{k}+1}}{m+\sum_{j=1}^{i_{k}+1}n_jN_j}\\
= & 1.
\end{align*}

On the other hand, there exists a $h\in\mathbb{N}$ such that $\xi_h\neq \eta_h$ since $\xi\neq\eta$. By the definition of $[k]$, there is a strictly increasing integer sequence $\{k_j\}_{j=1}^{\infty}$ such that $[k_j]=h$ for any $j\in\mathbb{N}$, which implies
\begin{equation}\label{pesudo-distance-1}
p_{i_{k_j}}^{\xi_{[k_j]}}\neq p_{i_{k_j}}^{\eta_{[k_j]}}.
\end{equation}
By the item \textbf{(c)} of Lemma \ref{lemma-double} and (\ref{pesudo-distance-1}), there is a $j_0^*\in\mathbb{N}$ such that for any $j\geq j_0^*$,
\begin{equation}\label{pesudo-distance-3}
\frac{|\ \{i\in[\sum_{i=1}^{i_{k_{j}}-1}n_iN_i+1,\sum_{i=1}^{i_{k_{j}}}n_iN_i]\ |\ d(u_{m+i},z_{m+i})<\zeta-\delta_{i_{k_{j}}}/2\}\ |}{n_{i_{k_{j}}}N_{i_{k_{j}}}}<\tau_{i_{k_{j}}}.
\end{equation}
Note that $x,y$ limit-shadows $\mathfrak{C}(\xi), \mathfrak{C}(\eta)$ respectively. So there exists a $j_0\in\mathbb{N}$ such that for any $n\geq m+\sum_{i=1}^{i_{k_{j_0}}-1}n_iN_i$
\begin{equation}\label{pesudo-distance-2}
d(f^{n-1}(x),u_n)<\zeta/8,\ \ \ d(f^{n-1}(y),z_n)<\zeta/8.
\end{equation}
Let $\hat{j}$ large enough such that $\delta_{i_{k_{\hat{j}}}}\leq \zeta/2$ and $\hat{j}\geq\mathrm{max}\{j_0,j_0^*\}$. Combining (\ref{pesudo-distance-3}) and (\ref{pesudo-distance-2}),

\begin{align*}
&\liminf_{n\to \infty}\frac{1}{n}|\{j\in [0,n-1]:\ d(f^i(x),f^i(y))<\zeta/2\}|\\
\leq & \liminf_{j\to \infty}\frac{1}{m+\sum_{i=1}^{i_{k_j}}n_iN_i}|\{i\in [0,m+\sum_{j=1}^{i_{k_j}}n_iN_i-1]:\ d(f^i(x),f^i(y))<\zeta/2\}|\\
\leq & \liminf_{ j\to \infty}\frac{1}{m+\sum_{i=1}^{i_{k_j}}n_iN_i}\{m+\sum_{j=1}^{i_{k_j}-1}n_iN_i+|\{i\in [m+\sum_{j=1}^{i_{k_j}-1}n_iN_i,m+\sum_{j=1}^{i_{k_j}}n_iN_i-1]:\\
& \ d(f^i(x),f^i(y))<\zeta/2\}|\}\\
\leq & \liminf_{j\to \infty}\frac{m+\sum_{j=1}^{i_{k_j}-1}n_iN_i}{m+\sum_{i=1}^{i_{k_j}}n_iN_i}+\tau_{i_{k_{j}}}\\
= & 0.
\end{align*}
So $x,y$ is a DC1-scrambled pair.

(4): Implied by (3).

\textbf{(b):} Suppose that $(X,f)$ has exponential shadowing property with exponent $\lambda$. Given $\alpha\in\mathcal{A}$, we will construct an uncountable $\alpha$-DC1-scrambled set $S^\alpha$. The construction is similar to the construction in the proof of item \textbf{(a)}. We put the differences in the following

(1) By  Lemma \ref{exponential-s-limit-shadowing} there is a $\delta_i\in(0,\eps_i)$ such that 
any $\delta_i$-limit-pseudo-orbit can be both  $(\varepsilon_i,\lambda)$-shadowed and limit-shadowed by some point in $X,$ and $\lim_{i\to\infty}\delta_i=0.$

(2) $N_i$ chosed here should be even number, satisfies (\ref{PS-N-1}), (\ref{PS-N-2}) and 
\begin{equation}\label{PS-N-add}
	\frac{n_{k+1}N_{k+1}-c_k}{m+\sum_{i=1}^{k+1}n_iN_i}>1-\tau_k\text{ for any $k\geq1$},
\end{equation}
where 
\begin{equation}\label{PS-N-add-2}
c_k:=\min\{c\in\mathbb{N}^+\mid\alpha(m+\sum_{j=1}^{k}n_jN_j+c)\tau_k>m+\sum_{j=1}^{k}n_jN_j+\frac{4\varepsilon_1}{1-e^{-\lambda}}\}\text{ for any $k\geq1$}.
\end{equation}
(\ref{PS-N-add-2}) can be satisfied since $\lim\limits_{n\to\infty}\alpha(n)=\infty$.

(3) $\mathfrak{C}(\xi)$ also can be seen as $$\mathfrak{C}(\xi)=orb(p_0,m)\underbrace{\mathfrak{C}_1\cdots\mathfrak{C}_1}_{N_1}\underbrace{\mathfrak{C}_2\cdots\mathfrak{C}_2}_{N_2}\underbrace{\mathfrak{C}_3\cdots\mathfrak{C}_3}_{N_3}\cdots,$$
it is a {$\delta_1$-limit-pseudo-orbit}. Then by  Lemma \ref{exponential-s-limit-shadowing}, $\mathfrak{C}(\xi)$ is both  $(\varepsilon_1,\lambda)$-shadowed and limit-shadowed by some point $S^\alpha_{\mathfrak{C}(\xi)}$. Denote  $$S^\alpha:=\bigcup_{\xi\in\{1,2\}^{\infty}}S^\alpha_{\mathfrak{C}(\xi)}.$$
We complete this proof by proving the following four facts:
\begin{description}
	\item[(i)] $S^{\alpha}\subseteq G_K$;
	\item[(ii)] For any $y\in S^\alpha$, $\omega_f(y)=\Lambda$;
	\item[(iii)] For any distinct $x,y\in S^\alpha$, $x,y$ is a $\alpha$-DC1-scrambled pair;
	\item[(iv)] $S^\alpha_{\mathfrak{C}(\xi)}\neq S^\alpha_{\mathfrak{C}(\eta)}$ if $\xi\neq \eta$, which implies $S^\alpha$ is uncountable.
\end{description}
Item (iv) is directly from item (iii), the proofs of item (i) and item (ii) are the same as item (1) and item (2) in the proof of item \textbf{(a)}. We only need to prove the different part in item (iii).

 For distinct $x,y\in S^\alpha$, we can assume that $x=S^\alpha_{\mathfrak{C}(\xi)}, y=S^\alpha_{\mathfrak{C}(\eta)},$ where $\mathfrak{C}(\xi)=\langle u_1,u_2,\cdots\rangle,$
$\mathfrak{C}(\eta)=\langle z_1,z_2,\cdots\rangle$ and $\xi\neq\eta$. Recall that $\{\tau_i\}_{i=1}^{\infty}$ is a strictly decreasing sequence with $\lim_{i\to\infty}\tau_i=0$. So for any $t>0$, there exists a $k_0\in\mathbb{N}^+$ such that for any $n\geq k_0$, we have $\tau_n<t$. Denote $I_k:=[\sum_{j=1}^{i_{k}}n_jN_j+1,\sum_{j=1}^{i_{k}+1}n_jN_j]$, note that (\ref{pesudo-construction}), then for any $k\in\mathbb{N}$ and any $s\in I_k$,
\begin{equation}\label{pesudo-equal-2-2}
	u_{m+s}=z_{m+s}.
\end{equation}
By (\ref{pesudo-equal-2-2}) and the definitions of $x$ and $y$, we have
\begin{equation}\label{eq-1-2}
	\sum_{s\in I_k}d(f^{m+s-1}(x),f^{m+s-1}(y))<2\cdot2\varepsilon_1\sum_{r=0}^{\frac{n_{i_k+1}N_{i_k+1}}{2}-1}e^{-r\lambda}<\frac{4\varepsilon_1}{1-e^{-\lambda}}.
\end{equation}
Combining (\ref{eq-1-2}) and (\ref{PS-N-add-2}), for any $k\geq k_0$ and $c\in I_k$, we have
\begin{equation}\label{eq-1-3}
	\sum_{j=0}^{m+c-1}d(f^j(x),f^j(y))\leq(m+\sum_{i=1}^{i_k}n_iN_i)\diam X+\frac{4\varepsilon_1}{1-e^{-\lambda}}<\alpha(m+\sum_{j=1}^{i_k}n_jN_j+c_{i_k})t.
\end{equation}
As a result, we have
\begin{align*}
	&\limsup_{n\to \infty}\frac{1}{n}|\{1\leq i \leq n:\sum_{j=0}^{i-1}d(f^{j}(x),f^{j}(y))<\alpha(i)t\}|\\
	\ge & \limsup_{ k\to \infty}\frac{1}{m+\sum_{j=1}^{i_{k}+1}n_jN_j}|\{i\in [m+\sum_{j=1}^{i_k}n_jN_j+c_{i_k},m+\sum_{j=1}^{i_{k}+1}n_jN_j]:\sum_{j=0}^{i-1}d(f^{j}(x),f^{j}(y))<\alpha(i)t\}|\\
	\ge & \limsup_{k\to \infty}\frac{n_{i_{k}+1}N_{i_{k}+1}-c_{i_k}}{m+\sum_{j=1}^{i_{k}+1}n_jN_j}\\
	= & 1.
\end{align*}
The rest of the proof of item (iii) is the same as item (3) in the proof of item \textbf{(a)}.\qed

\subsection{Strong-elementary-dense Property}
For any $m\in\N$ and $\{\nu_i\}_{i=1}^m \subseteq M(X)$,   we write $\mathrm{cov}\{\nu_i\}_{i=1}^m$ for the convex combination of $\{\nu_i\}_{i=1}^m$,   namely,
$$\mathrm{cov}\{\nu_i\}_{i=1}^m=\mathrm{cov}(\nu_1,\cdots,\nu_m):=\left\{\sum_{i=1}^mt_i\nu_i:t_i\in[0,  1],  1\leq i\leq m~\textrm{and}~\sum_{i=1}^mt_i=1\right\}.$$
\begin{Def}
We say that $(X,  f)$ satisfies the strong-elementary-dense property if for any $K=\mathrm{cov}\{\mu_i\}_{i=1}^m\subseteq M_f(X)$ and any $\eps>0$,  there exist compact invariant subset $\Lambda_i\subsetneq\Lambda\subsetneq X$,   $1\leq i\leq m$ such that
\begin{description}
  \item [(a)] $(\Lambda,f)$ has the strong specification property, more generally $\Lambda\in ICM.$
  \item [(b)] $d_H(K,  M_f(\Lambda))<\eps$,   $d_H(\mu_i,  M_f(\Lambda_i))<\eps$.
  \item [(c)]  There is no fixed point in $\Lambda$.
\end{description}

\end{Def}

\begin{mainlemma}\label{strong-elementary-entropy-dense}
Suppose $(X,  f)$ is positively expansive, mixing and has the shadowing property.   Then $(X,  f)$ satisfies the strong-elementary-dense property.
\end{mainlemma}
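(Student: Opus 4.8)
The plan is to extract the whole statement from the strong specification property, which is available by Lemma \ref{mixing+shadowing=spe} because $(X,f)$ is positively expansive, mixing and has the shadowing property. The idea is to code a subsystem $\Lambda$ out of finitely many orbit segments that individually approximate the extreme points $\mu_1,\dots,\mu_m$ of $K=\mathrm{cov}\{\mu_i\}_{i=1}^m$, and to read the desired properties off the coding.

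First I would fix $\eta>0$ much smaller than $\eps$ and, for each $i$, produce a single orbit segment $w_i=\{x_i,\ell\}$ of a common large length $\ell$ whose empirical measure $\mathcal{E}_\ell(x_i)$ lies within $\eta$ of $\mu_i$; this is exactly the specification/generic-point mechanism already carried out in Lemma \ref{lemma-single} applied with $\Lambda=X$. I would moreover arrange that each $w_i$ contains at least one step along which the orbit moves by a fixed definite amount $\rho_0>0$: this is possible since $X$ is nondegenerate and mixing, so specification can route a segment through two points at positive distance. Writing $K_\eta$ for the specification gap at scale $\eta$, the building blocks are $w_1,\dots,w_m$ together with transition segments of length $\geq K_\eta$. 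I would then define $\Lambda$ as the (invariant, compact) closure of the set of all points that $\eta$-trace some concatenation $w_{\omega_1}\,[\mathrm{gap}]\,w_{\omega_2}\,[\mathrm{gap}]\cdots$ indexed by an arbitrary $\omega\in\{1,\dots,m\}^{\mathbb{N}}$ (bi-infinite codings in the homeomorphism case), and let $\Lambda_i\subsetneq\Lambda$ be the analogous subsystem using the single symbol $i$.

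The easy items of the definition fall out of this structure. For (a) I would verify $\Lambda\in ICM$: internal chain transitivity comes from splicing two codings together, and the mixing refinement — every sufficiently large chain length being realizable — comes from the freedom to make each transition gap any integer $\geq K_\eta$ (one can in fact promote this to strong specification, but $ICM$ is all that is used downstream, via Lemmas \ref{ICT-in-ICM}, \ref{lemma-single} and \ref{lemma-double}). For (c), every $y\in\Lambda$ traces a concatenation in which the forced $\rho_0$-jumps recur, so $\sup_j d(f^jy,f^{j+1}y)\geq\rho_0-2\eta>0$ and hence $f(y)\neq y$; the same forcing keeps $\Lambda_i$ fixed-point-free even when $\mu_i$ is a Dirac mass at a fixed point, and since all coded orbits stay near the finitely many segments, $\Lambda$ has empty interior and so $\Lambda\subsetneq X$.

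The heart of the argument is (b), the two-sided measure approximation. For $K\subseteq$ ($\eps$-neighbourhood of $M_f(\Lambda)$), given $\sum_i t_i\mu_i\in K$ I would choose a coding $\omega$ whose symbol frequencies equal the $t_i$; any tracing point then has limit empirical measures within $O(\eta)$ of $\sum_i t_i\mathcal{E}_\ell(x_i)$, hence within $O(\eta)$ of $\sum_i t_i\mu_i$. For the reverse inclusion $M_f(\Lambda)\subseteq$ ($\eps$-neighbourhood of $K$), I would take any $\nu\in M_f(\Lambda)$, approximate it by empirical measures of points of $\Lambda$, and invoke Lemma \ref{measure distance} to show that on any long window such an empirical measure is within $O(\eta)$ of a convex combination of the block measures $\mathcal{E}_\ell(x_i)$, using that the gap fraction $K_\eta/\ell$ is negligible once $\ell$ is large. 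Restricting the same estimates to the single symbol $i$ gives $d_H(\mu_i,M_f(\Lambda_i))<\eps$. The main obstacle I anticipate is precisely this reverse inclusion combined with the fixed-point-free requirement: one must show that \emph{every} invariant measure of the coded system is an approximate convex combination of the prescribed blocks, controlling the transition-gap contribution uniformly along arbitrary, not necessarily block-aligned, orbit windows, while simultaneously forcing recurrent definite jumps so that no $\mu_i$ — even a Dirac mass at a fixed point — can drag a genuine fixed point into $\Lambda$.
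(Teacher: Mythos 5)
Your overall strategy (concatenating orbit segments that approximate the extreme measures, shadowing the concatenations, and using positive expansiveness to make the traced set a compact invariant image of the coding space) is workable, but as written there is a genuine conflict between the two uses you make of the transition gaps. For internal chain \emph{mixing} you allow each gap to be ``any integer $\geq K_\eta$''; for the reverse inclusion $\mathcal M_f(\Lambda)\subseteq B(K,\eps)$ you need the gap fraction of every long orbit window to be of order $K_\eta/\ell$. These are incompatible: if codings with arbitrarily long gaps are admitted, then during a gap the specification orbit is completely uncontrolled, and a coding whose gap lengths grow superlinearly in the number of blocks produces points of $\Lambda$ whose orbits spend asymptotically all of their time in gaps; the $\omega$-limit sets of such points lie in $\Lambda$ and carry invariant measures that need have nothing to do with $K$, so $\mathcal M_f(\Lambda)\subseteq B(K,\eps)$ fails. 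To repair this you must bound the gaps (say, force every gap length into $\{K_\eta,\dots,2K_\eta\}$), after which the ``every sufficiently large chain length is realizable'' half of $ICM$ no longer comes for free and needs a separate gcd argument. A second, smaller point: for item (b) you should check that inserting the forced $\rho_0$-jump into $w_i$ does not spoil $d(\mathcal E_\ell(x_i),\mu_i)<\eta$ (it does not, since one exceptional step out of $\ell$ is negligible), and that $\Lambda_i$ is genuinely invariant, which requires closing $w_i$ up into a periodic pseudo-orbit rather than merely repeating a segment.

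The paper avoids both difficulties at once with a different device, and only for $m=2$ (which it notes suffices for the main theorems). It uses Lemma \ref{lemma-single} to produce two \emph{periodic} shadowing points $p_1\in Per_n$ and $p_2\in Per_{n+1}$ with $\mathcal E_n(p_1)$ and $\mathcal E_{n+1}(p_2)$ close to $\mu_1,\mu_2$, and takes $\Lambda$ to be the image, under the shadowing map (single-valued by expansiveness), of the subshift of finite type whose points are arbitrary concatenations of the exact blocks $orb(p_1,n)$ and $orb(p_2,n+1)$ with no connecting gaps at all. Coprimality of $n$ and $n+1$ makes this SFT mixing, hence $\Lambda$ inherits strong specification as a factor (Lemma \ref{mixing+shadowing=spe} plus the factor argument), and the absence of gaps makes every invariant measure of $\Lambda$ an exact limit of convex combinations of $\mathcal E_n(p_1)$ and $\mathcal E_{n+1}(p_2)$, which is precisely the reverse inclusion you struggle with. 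Your approach is more ambitious in aiming at arbitrary $m$ and gives a cleaner fixed-point-free argument via the forced jump, but to be correct it must either import the coprime-period/SFT trick or bound the gaps and supply the missing gcd argument.
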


\begin{proof}
 Actually, it is enough to finish the proof of our main theorems if this lemma holds for any $K=\mathrm{cov}\{\mu_1,\mu_2\}\subseteq M_f(X)$. So, for brevity, we just prove this lemma for such $K$. Let $e$ be the expansive constant and $\eps^*=\mathrm{min}\{\eps/3,e/3\}$. By shadowing property, there is a $\delta\in(0,\eps^*)$ such that any $\delta$-pseudo-orbit is {$\eps^*$-shadowed} by some point in $X$. Note $X\in ICM$ since $(X,f)$ is mixing. So, by Lemma \ref{lemma-single} for a fix $x\in X$, there is an $n\in\mathbb{N}$ and $p_1,p_2\in B(x,\delta/2)$ such that $p_1\in Per_n,p_2\in Per_{n+1}$ with $\mathcal E_{n}(p_1)\in B(\mu_1,\eps^*),\mathcal E_{n+1}(p_2)\in B(\mu_2,\eps^*)$  Denote $I=\{p_1,f(p_1),\cdots,f^{n-1}(p_1),p_2,f(p_2),\cdots,f^n(p_2)\}.$
Let $\Sigma\subseteq \Sigma_{I}$ be a subshift of finite type with transition matrix $L$:

\begin{equation}\label{Eq:matrix}
\bordermatrix{%
                & p_1     & f(p_1)   &\cdots    &           & p_2     &           &\cdots              & f^n(p_2)\cr
p_1             & 0       & 1      & 0        &           & \cdots  &           &\cdots              & 0\cr
f(p_1)            & 0       & 0      & 1        & 0         & \cdots  &           &\cdots              & 0\cr
\vdots          & \vdots  &        &          &\ddots     &         &           &                    & \vdots\cr
f^{n-1}(p_1)      & 1       & 0      &\cdots    & 0         & 1       & 0         &\cdots              & 0\cr
p_2             & 0       & 0      &\cdots    & \cdots    & 0       & 1         & \cdots             & 0\cr
\vdots          &\vdots   &        & \ddots   &           & \vdots  & \vdots    &\ddots              & \vdots\cr
f^{n-1}(p_2 )     & 0       &        &          & 0         & 0       &           &                    & 1\cr
f^n(p_2)          & 1       & 0      &\cdots    & 0         & 1       & 0         &\cdots              & 0\cr
},
\end{equation}

\noindent where $\Sigma_{I}:=\{\ \langle\theta_1,\theta_2,\cdots\rangle\ :\ \theta_i\in I, i\in \mathbb{N}^+\}$ is a full shift with alphabet $I$. Actually we define a subshift whose points is the infinite connection of the orbits of $p_1$ and $p_2$ such like $\langle f^{n-2}(p_1),f^{n-1}(p_1),p_1,f(p_1),\cdots,f^{n-1}(p_1),p_2,f(p_2),\cdots,f^{n-1}(p_2)\cdots \text{repeating} \cdots\rangle$. And this is what the transition matrix exactly implies. The directed graph deduced by $L$ is as follows:

\begin{equation*}
\includegraphics[height=2.5cm]{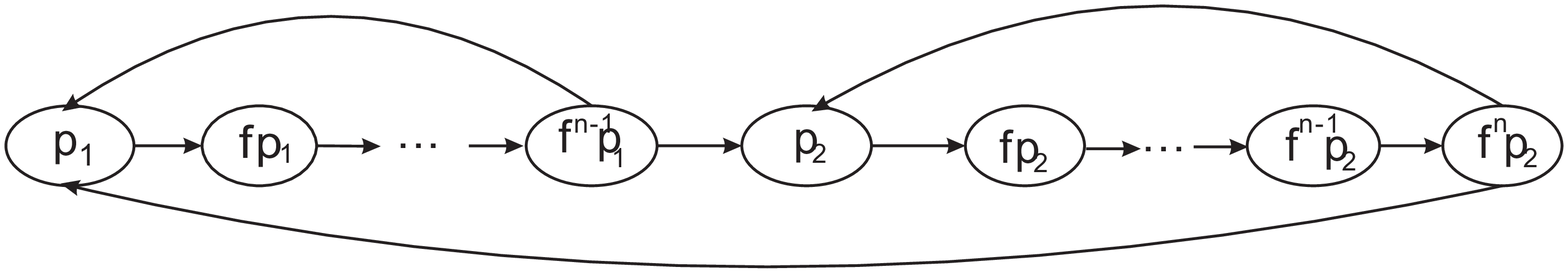}
\end{equation*}

\text{ }\\\text{ }\\
\noindent Obviously, it is a strongly connected graph, which implies $L$ is irreducible. By \cite[Proposition 17.9]{Sig}, $(\Sigma,\sigma)$ is transitive. For any $k\geq n^2$, we can write $k$ as $k=sn+t$ with $s\geq n$ and $t\le n$. Thus $k=(s-t)n+t(n+1)$. In other words, for any $k\geq n^2$, there is a block with length $k$ as
$$\langle\underbrace{orb(p_1,n)\cdots orb(p_1,n)}_{s-t}\underbrace{orb(p_2,n+1)\cdots orb(p_2,n+1)}_{t}\rangle.$$
\noindent Then for any $a,b\in I$, there exists an $m_0(\ge n^2)$ such that for any $m\geq m_0$, $\{x\in\Sigma\ |\ x_1=a, x_m=b\}\neq\emptyset$. So $(\Sigma,\sigma)$ is mixing by \cite[Proposition 17.8, 17.10 (3)]{Sig}. Further, $(\Sigma,\sigma)$ has strong specification property by Lemma \ref{mixing+shadowing=spe}. For any $\theta\in \Sigma$, one can observe that it is a $\delta$-pseudo-orbit. Denote
$$Y_\theta:=\{x\in X\ |\ d(f^{i-1}(x),\theta_i)\leq e/3,\ i\geq 1\}.$$
\noindent Then $Y_\theta$ is nonempty by shadowing property and singleton by expansiveness. So $f(Y_\theta)=Y_{\sigma(\theta)}$.
Define
$\Lambda=\bigcup_{\theta\in\Sigma}Y_\theta,\   \text{and}$
\begin{align*}
\pi:\ &\Sigma\rightarrow\Lambda\\
&\theta\mapsto Y_\theta.
\end{align*}
One can check that $\pi$ is continuous. Thus $\Lambda$ is closed and $(\Lambda,f)$ is a factor of $(\Sigma,\sigma)$. By \cite[Proposition 21.4 (c)]{Sig}, $(\Lambda,f)$ has strong  specification property.

Let $\Lambda_i=orb(p_i),i=1,2$. Then $\Lambda_i\subsetneq \Lambda$ and $d_H(\mu_i,\mathcal M_f(\Lambda_i))<\eps^*<\eps$. For any $\tilde{\mu}\in K$, let $\mu\in B(\tilde{\mu},\eps^*)$ with $\mu=\frac{c_1}{c}\mu_1+\frac{c_2}{c}\mu_2$, where $c_1,c_2\in\mathbb{N}^+, c=c_1+c_2$. Let $\mathfrak{C}_1=orb(p_1,n),\mathfrak{C}_2=orb(p_2,n+1)$,
$$\mathfrak{C}_\mu=\underbrace{\mathfrak{C}_1\cdots\mathfrak{C}_1}_{(n+1)c_1}\underbrace{\mathfrak{C}_2\cdots\mathfrak{C}_2}_{nc_2},$$
and $\theta=\mathfrak{C}_\mu\mathfrak{C}_\mu\cdots$. Then $\theta\in\Sigma$ and $Y_\theta\in Per_{n(n+1)c}$, which implies $V_f(Y_\theta)=\mathcal E_{n(n+1)c}(Y_\theta)$.
\begin{align*}
d(\mathcal E_{n(n+1)c}(Y_\theta),\mu) &\leq d(\mathcal E_{n(n+1)c}(Y_\theta),\delta_{\mathfrak{C}_\mu})+d(\delta_{\mathfrak{C}_\mu},\frac{c_1}{c}\mu_1+\frac{c_2}{c}\mu_2)\\
&\leq \eps^*+d(\frac{n(n+1)c_1}{n(n+1)c}\delta_{\mathfrak{C}_1}+\frac{n(n+1)c_2}{n(n+1)c}\delta_{\mathfrak{C}_2},\frac{c_1}{c}\mu_1+\frac{c_2}{c}\mu_2)\\
&\leq \eps^*+\frac{c_1}{c}\eps^*+\frac{c_2}{c}\eps^*\\
&< 2\eps^*.
\end{align*}
Thus $d(V_f(Y_\theta),\tilde{\mu})<3\eps^*\leq\eps$. Further, $K\subseteq B(\mathcal M_f(\Lambda),\eps)$. One the other hand, $\mathcal M_f(\Lambda)\subseteq B(K,\eps)$ by its definition. So $d_H(K,\mathcal M_f(\Lambda))<\eps$.

Finally, from the proof of Lemma \ref{lemma-single},  we can assume that $p_1$ and $p_2$ are not fixed points, so there is no fixed point in $\Lambda$.
\end{proof}

\section{Proof of Main Theorems}\label{sectionEndOfProofs}

\subsection{Some Lemmas}
For any $x\in X$,   we define the measure center of $x$ as
$$C^*_x:=\overline{\bigcup_{\mu\in \mathcal M_f(\omega_f(x))}S_{\mu}}.  $$
Furthermore,   we define the measure center of a closed invariant set $\Lambda\subseteq X$ as
$$C^*_\Lambda:=\overline{\bigcup_{\mu\in \mathcal M_f(\Lambda)}S_{\mu}}.  $$

\begin{Lem}\label{G_K-subset-omega}
For $(X,  f)$,   let $\Lambda\subsetneq X$ be closed $f$-invariant and $K\subseteq \mathcal M_f(\Lambda)$ be a nonempty compact connected set.
\begin{description}
  \item[(1)] Suppose $\bigcap_{\mu\in K}S_{\mu}=C^*_\Lambda$.   Then
  $$G_K^{\Lambda}\subseteq\{x\in X:\bigcap_{\mu\in K}S_{\mu}=\omega_{\underline{d}}(x)= \omega_{\overline{d}}(x)= \omega_{B^*}(x)= C^*_\Lambda\subseteq \omega_f(x)=\Lambda\}.  $$
  \item[(2)] Suppose $\bigcap_{\mu\in K}S_{\mu}=\overline{\bigcup_{\nu\in K}S_{\nu}}\subsetneq C^*_\Lambda$.   Then
      $$G_K^{\Lambda}\subseteq\{x\in X:\bigcap_{\mu\in K}S_{\mu}=\omega_{\underline{d}}(x)= \omega_{\overline{d}}(x)\subsetneq  \omega_{B^*}(x)= C^*_\Lambda\subseteq \omega_f(x)=\Lambda\}.  $$
  \item[(3)] Suppose $\bigcap_{\mu\in K}S_{\mu}\subsetneq\overline{\bigcup_{\nu\in K}S_{\nu}}=C^*_\Lambda$.   Then
     $$G_K^{\Lambda}\subseteq\{x\in X:\bigcap_{\mu\in K}S_{\mu}=\omega_{\underline{d}}(x)\subsetneq \omega_{\overline{d}}(x)=  \omega_{B^*}(x)= C^*_\Lambda\subseteq \omega_f(x)=\Lambda\}.  $$
  \item[(4)] Suppose $\bigcap_{\mu\in K}S_{\mu}\subsetneq\overline{\bigcup_{\nu\in K}S_{\nu}}\subsetneq C^*_\Lambda$.   Then
      $$G_K^{\Lambda}\subseteq\{x\in X:\bigcap_{\mu\in K}S_{\mu}=\omega_{\underline{d}}(x)\subsetneq \omega_{\overline{d}}(x)\subsetneq  \omega_{B^*}(x)= C^*_\Lambda\subseteq \omega_f(x)=\Lambda\}.  $$
\end{description}
\end{Lem}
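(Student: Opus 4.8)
The plan is to reduce the entire lemma to Proposition \ref{density-measure-prop}, which converts each statistical $\omega$-limit set into a formula involving the supports of the measures in $V_f(x)$ or in $\mathcal{M}_f(\omega_f(x))$. Once those formulas are in place, all four cases should follow by unwinding the definitions of $G_K^\Lambda$ and $C^*_\Lambda$ and comparing a short chain of set inclusions; I do not anticipate any genuinely analytic difficulty.

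First I would fix an arbitrary $x \in G_K^{\Lambda}$ and record its two defining properties, $V_f(x) = K$ and $\omega_f(x) = \Lambda$. Substituting these into Proposition \ref{density-measure-prop} yields the four identities
$$\omega_{\underline{d}}(x) = \bigcap_{\mu \in K} S_\mu, \quad \omega_{\overline{d}}(x) = \overline{\bigcup_{\nu \in K} S_\nu}, \quad \omega_{B^*}(x) = C^*_\Lambda, \quad \omega_f(x) = \Lambda,$$
where the third identity combines $\omega_{B^*}(x) = \overline{\bigcup_{\mu \in \mathcal{M}_f(\omega_f(x))} S_\mu}$ with $\omega_f(x) = \Lambda$ and the definition of $C^*_\Lambda$. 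These four equalities are the common backbone of all four cases and hold independently of which hypothesis we are in.

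Next I would establish the universal chain $\bigcap_{\mu \in K} S_\mu \subseteq \overline{\bigcup_{\nu \in K} S_\nu} \subseteq C^*_\Lambda$. The first inclusion uses only that $K$ is nonempty: picking any $\mu_0 \in K$ gives $\bigcap_{\mu \in K} S_\mu \subseteq S_{\mu_0} \subseteq \overline{\bigcup_{\nu \in K} S_\nu}$. The second uses $K \subseteq \mathcal{M}_f(\Lambda)$, so that $\bigcup_{\nu \in K} S_\nu \subseteq \bigcup_{\mu \in \mathcal{M}_f(\Lambda)} S_\mu$ before passing to closures. With this chain available, each case is simply a matter of feeding in the corresponding hypothesis---an equality or a strict inclusion at one or both links of the chain---and reading off, via the backbone identities, which of $\omega_{\underline{d}}(x),\,\omega_{\overline{d}}(x),\,\omega_{B^*}(x)$ coincide and which are strictly nested. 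The closing inclusion $C^*_\Lambda \subseteq \omega_f(x) = \Lambda$ is immediate, since $S_\mu \subseteq \Lambda$ for every $\mu \in \mathcal{M}_f(\Lambda)$ and $\Lambda$ is closed.

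The step I would treat most carefully is the transfer of strictness: I must confirm that a strict inclusion assumed between, say, $\overline{\bigcup_{\nu \in K} S_\nu}$ and $C^*_\Lambda$ really produces a strict inclusion $\omega_{\overline{d}}(x) \subsetneq \omega_{B^*}(x)$. This is automatic precisely because the backbone relations are \emph{equalities} of sets, not mere inclusions, so any strict relation among the support expressions is inherited verbatim by the statistical $\omega$-limit sets. Beyond this bookkeeping, the proof is essentially a direct corollary of Proposition \ref{density-measure-prop}.
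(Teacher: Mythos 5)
Your proposal is correct and follows essentially the same route as the paper: both fix $x\in G_K^{\Lambda}$, apply Proposition \ref{density-measure-prop} to identify $\omega_{\underline{d}}(x)$, $\omega_{\overline{d}}(x)$, $\omega_{B^*}(x)$ with $\bigcap_{\mu\in K}S_{\mu}$, $\overline{\bigcup_{\nu\in K}S_{\nu}}$, $C^*_\Lambda$ respectively, and then read off each case from the hypotheses. The only cosmetic difference is that the paper obtains the containment chain from the general relation \eqref{omega-relation} while you derive it directly from the support expressions; both are valid.
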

\begin{proof}
For any $x\in G_K^{\Lambda}$,   $\omega_f(x)=\Lambda$ by definition.   So by item \textbf{(4)} of Proposition \ref{density-measure-prop},   $$\omega_{B^*}(x)=C^*_x=\overline{\bigcup_{\mu\in \mathcal M_f( \omega_f(x))}S_{\mu}}=\overline{\bigcup_{\mu\in \mathcal M_f( \Lambda)}S_{\mu}}=C^*_\Lambda.  $$
Consequently,   one uses (\ref{omega-relation}) and obtains that
\begin{equation}\label{G_K-omega-equation-1}
  \omega_{\underline{d}}(x)\subseteq \omega_{\overline{d}}(x)\subseteq \omega_{B^*}(x)=C^*_{\Lambda}\subseteq\omega_f(x)=\Lambda.
\end{equation}
Note that $V_f(x)=K $, then
\begin{equation}\label{G_K-omega-equation-2}
  \omega_{\underline{d}}(x)=\bigcap_{\mu\in V_f(x)}S_{\mu}=\bigcap_{\mu\in K}S_{\mu}~\textrm{and}~\omega_{\overline{d}}(x)=\overline{\bigcup_{\mu\in V_f(x)}S_{\mu}}=\overline{\bigcup_{\nu\in K}S_{\nu}}
\end{equation}
by Proposition \ref{density-measure-prop}.
Therefore,   a convenient use of \eqref{G_K-omega-equation-1} and \eqref{G_K-omega-equation-2} yields (1)-(4).
\end{proof}

Given an invariant compact subset $\Lambda\subset X$ and a continuous function $\varphi:X\rightarrow \mathbb{R},$
we denote
$$L^\Lambda_\varphi=\left[\inf_{\mu\in \mathcal M_f( \Lambda)}\int\varphi d\mu,  \,  \sup_{\mu\in \mathcal M_f( \Lambda)}\int\varphi d\mu\right].$$

\begin{Lem}\label{lambda-theta-level}
Suppose $(X,  f)$ is positively expansive, mixing and has the shadowing property.   Let $\varphi:X\rightarrow \mathbb{R}$ be a continuous function and assume that $Int(L_\varphi)\neq \emptyset$.  Then for any $a\in Int(L_\varphi)$, there are two $f$-invariant compact subsets $\Lambda\subsetneq \Theta \subsetneq X$ such that
 \begin{description}
   \item[(1)] $\Lambda$ has the strong specification property, more generally $\Lambda\in ICM$ with
   \begin{equation}
  C^*_\Lambda=\Lambda,  ~a\in Int(L_\varphi^{\Lambda}).
\end{equation}
   \item[(2)] $\Theta$ is internally chain mixing with
   \begin{equation}
  \Lambda=C^*_{\Theta}\subsetneq\Theta,  ~a\in Int(L^\Theta_\varphi).
\end{equation}
   \item[(3)] There is no fixed point in $\Lambda$ and $\Theta$.
 \end{description}
In particular, $Int(L^\Lambda_\varphi)=Int(L^\Theta_\varphi)\neq \emptyset.$
\end{Lem}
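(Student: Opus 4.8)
The plan is to produce $\Lambda$ first and then enlarge it to $\Theta$ by attaching a single transient excursion that carries no invariant mass. First I would fix two measures $\mu_1,\mu_2\in\mathcal M_f(X)$ with $\int\varphi\,d\mu_1<a<\int\varphi\,d\mu_2$, which exist because $a\in\Int(L_\varphi)$, and apply Lemma \ref{strong-elementary-entropy-dense} to $K=\mathrm{cov}\{\mu_1,\mu_2\}$ with a small $\eps>0$. This yields a compact invariant $\Lambda\subsetneq X$ with the strong specification property (in particular $\Lambda\in ICM$) and no fixed point. By Lemma \ref{lemma-full}, $(\Lambda,f)$ carries a fully supported invariant measure, so $C^*_\Lambda=\Lambda$; and by the $\eps$-Hausdorff closeness of $\mathcal M_f(\Lambda)$ to $K$ together with the weak$^*$-continuity of $\nu\mapsto\int\varphi\,d\nu$, one finds $\nu_1,\nu_2\in\mathcal M_f(\Lambda)$ with $\int\varphi\,d\nu_1<a<\int\varphi\,d\nu_2$ once $\eps$ is small, giving $a\in\Int(L_\varphi^\Lambda)$. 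This settles item (1).

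For item (2) I would build $\Theta$ as an $\omega$-limit set. Pick $w\in X\setminus\Lambda$ which is not a fixed point, and use the shadowing property (via the $s$-limit shadowing of Lemma \ref{lem-shadowing-s-limit-shadowing}) to realise a single orbit through $w$ that is homoclinic to $\Lambda$, i.e. asymptotic to $\Lambda$ in both forward and backward time. I would then construct a point $z$ by shadowing a concatenated pseudo-orbit whose blocks alternate between long segments inside $\Lambda$ (long enough that the orbit of $z$ becomes $\eps_k$-dense in $\Lambda$, forcing $\Lambda\subseteq\omega_f(z)$) and successively longer, coherent traverses of the fixed homoclinic excursion through $w$. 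Setting $\Theta:=\omega_f(z)$, the excursion blocks guarantee $w\in\Theta$, so $\Lambda\subsetneq\Theta$; by Lemma \ref{omega-subset-ICT} $\Theta\in ICT$, and since $\Lambda\in ICM$ with $\Lambda\subseteq\Theta$, Lemma \ref{ICT-in-ICM} upgrades this to $\Theta\in ICM$. Keeping the pseudo-orbit inside a fixed proper closed region (avoiding some nonempty open set $U_0$) ensures $\Theta\subsetneq X$, and neither $w$ nor its excursion orbit being fixed gives item (3).

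The decisive point is $C^*_\Theta=\Lambda$. Because the homoclinic excursion is traversed with density tending to $0$ (the $\Lambda$-blocks dominate), every point of $\Theta\setminus\Lambda$ lies on a transient orbit whose forward iterates converge to $\Lambda$ and is therefore non-recurrent; by the Poincar\'e recurrence theorem any $\mu\in\mathcal M_f(\Theta)$ is carried by the recurrent set, so $S_\mu\subseteq\Lambda$. Hence $C^*_\Theta=\overline{\bigcup_{\mu\in\mathcal M_f(\Theta)}S_\mu}\subseteq\Lambda$, while $\Lambda\subseteq\Theta$ forces the reverse inclusion, so $C^*_\Theta=\Lambda$. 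In particular $\mathcal M_f(\Theta)=\mathcal M_f(\Lambda)$, whence $L_\varphi^\Theta=L_\varphi^\Lambda$ and $\Int(L_\varphi^\Theta)=\Int(L_\varphi^\Lambda)\ni a$, which also yields the final assertion.

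I expect the main obstacle to be the construction of $\Theta$ in item (2): one must simultaneously make $\Theta$ strictly larger than $\Lambda$ and internally chain mixing (which pulls the extra points into the chain-recurrent structure) while keeping the measure center equal to $\Lambda$ (which demands those extra points carry no invariant mass). The tension is resolved only because the excursion is chosen homoclinic to $\Lambda$ and visited with vanishing frequency; verifying that the resulting $\omega$-limit set contains no spurious invariant measure — that is, that $\Theta\setminus\Lambda$ is exactly one transient homoclinic orbit rather than a larger wandering set — is the delicate part, where the coherence of the repeated excursions and the shadowing estimates must be controlled carefully.
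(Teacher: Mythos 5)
Your part (1) and the overall scheme of your part (2) coincide with the paper's: the paper likewise obtains $\Lambda$ from Lemma \ref{strong-elementary-entropy-dense} together with Lemma \ref{lemma-full}, builds $\Theta$ as an $\omega$-limit set containing $\Lambda$ plus a forward-asymptotic orbit through a point outside $\Lambda$, upgrades $\Theta\in ICT$ to $\Theta\in ICM$ via Lemmas \ref{omega-subset-ICT} and \ref{ICT-in-ICM}, and gets $C^*_\Theta=\Lambda$ and $L^\Theta_\varphi=L^\Lambda_\varphi$ from $\mathcal M_f(\Theta)=\mathcal M_f(\Lambda)$. The difference is that the paper delegates the construction of $\Theta$ entirely to \cite[Corollary 4.21, Lemma 6.6, Lemma 7.10]{DT}, whereas you attempt to build it by hand, and your hand-built version has a genuine gap at exactly the point you flag.

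The gap is the existence and use of a single orbit bi-asymptotic (``homoclinic'') to $\Lambda$. The systems here are positively expansive, hence in general non-invertible, and the shadowing tools available (Lemma \ref{lem-shadowing-s-limit-shadowing}, Lemma \ref{exponential-s-limit-shadowing}) are stated for one-sided pseudo-orbits, so a full backward orbit converging to $\Lambda$ is not free; you assert it but do not construct it. Without it you are stuck between two failures: if every excursion block begins at the same point $z\notin\Lambda$, the jump at the start of each excursion is at least $\operatorname{dist}(z,f(\Lambda))=\operatorname{dist}(z,\Lambda)>0$, so the concatenation is not a limit-pseudo-orbit and limit-shadowing --- which is what pins down $\omega_f$ of the shadowing point exactly --- does not apply; if instead you use fresh $\delta_k$-chains from $\Lambda$ out to $w$ and back with $\delta_k\to 0$, the excursions change at every stage, their accumulation set is uncontrolled, and you can no longer argue that every point of $\Theta\setminus\Lambda$ is non-recurrent, which is precisely what your Poincar\'e-recurrence step needs. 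The paper's citation sidesteps this: \cite[Lemma 7.10]{DT} only gives $A\subseteq\Theta=\omega_f(x)\subseteq\bigcup_{l\geq 0}f^{-l}A$ with $A=\operatorname{orb}(z,f)\cup\Lambda$, i.e.\ every point of $\Theta$ eventually maps into $A$, and that weaker containment already forces every point of $\Theta\setminus\Lambda$ to be non-recurrent and feeds into your own recurrence argument for $C^*_\Theta=\Lambda$. So the repair is not to prove that $\Theta\setminus\Lambda$ is one homoclinic orbit, but to aim for, and actually establish, the weaker inclusion $\Theta\subseteq\bigcup_{l\geq 0}f^{-l}A$.
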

\begin{proof}
$\mathbf{(1)}$: For any $a\in Int(L_\varphi)$, we can choose $\mu, \nu\in\mathcal M_f(X)$ with $\int\varphi d\mu<a<\int\varphi d\nu$. Denote
$$\zeta=\mathrm{min}\{a-\int\varphi d\mu,\ \int\varphi d\nu-a\}.$$
By Lemma \ref{strong-elementary-entropy-dense}, we obtain a $\Lambda\subsetneq X$ and $\Lambda$ has strong specification property with two measure $\widetilde{\mu},\widetilde{\nu}\in\mathcal M_f(\Lambda)$ such that
$$\left|\int\varphi d\widetilde{\mu}-\int\varphi d\mu\right|<\zeta/2,  ~\left|\int\varphi d\widetilde{\omega}-\int\varphi d\omega\right|<\zeta/2.$$
Then $\int\varphi d\widetilde{\mu}<a<\int\varphi d\widetilde{\nu}$, which implies $a\in Int(L_\varphi^{\Lambda})$. By Lemma \ref{lemma-full}, there is a $\rho\in\mathcal M_f(\Lambda)$ with full support. Thus $C^*_\Lambda=\Lambda$.

$\mathbf{(2)}$: By \cite[Corollary 4.21]{DT}, we can find a $z\notin\Lambda$ such that $\omega_f(z)=\Lambda$. Let $A=\orb(z,  f)\cup \Lambda$.   Note that $A$ is closed and $f$-invariant.   So by \cite[Lemma 7.10]{DT},  there is a point $x\in X$ such that
$$A\subseteq\omega_f(x)\subseteq \cup_{l=0}^{\infty}f^{-l}A.  $$
In particular,
$$\overline{\bigcup_{y\in\omega_f(x)}\omega_f(y)}\subseteq A\subseteq\omega_f(x)\neq X.  $$
Let $\Theta=\omega_f(x)$.  Then $\Theta\in ICT$ by Lemma \ref{omega-subset-ICT}. Note that $\Lambda\subsetneq\Theta$ and $\Lambda\in ICM$, so $\Theta\in ICM$ by Lemma \ref{ICT-in-ICM}. By \cite[Lemma 6.6]{DT},
$$\mathcal M_f(\Theta)=\mathcal M_f(A)=\mathcal M_f(\Lambda).  $$
This implies that $C_\Theta^*=\Lambda\subsetneq\Theta$ and $Int(L_\varphi^\Theta)=Int(L_\varphi^\Lambda)\neq \emptyset$.

$\mathbf{(3)}$: Finally, by Lemma \ref{strong-elementary-entropy-dense}, there is no fixed point in $\Lambda.$ By $\mathcal M_f(\Theta)=\mathcal M_f(\Lambda),$ there is also no fixed point in $\Theta$.
\end{proof}

\begin{Def}\label{definition of entropy-dense property}
	We say $(X,f)$ satisfies the entropy-dense property if for any $\mu \in \mathcal{M}_f(X)$, for any neighbourhood $G$ of $\mu$ in $\mathcal{M}(X)$, and for any $\eta >0$, there exists a closed $f$-invariant set $\Lambda_{\mu}\subseteq X$, such that  $\mathcal{M}_f(\Lambda_{\mu})\subseteq G$ and $\htop(f,\Lambda_{\mu})>h_{\mu}(f)-\eta$. By the classical variational principle, it is equivalent that for any neighbourhood $G$ of $\mu$ in $\mathcal{M}(X)$, and for any $\eta >0$, there exists a $\nu \in \mathcal{M}_{f}^e(X)$ such that $h_{\nu}(f)>h_{\mu}(f)-\eta$ and $\mathcal{M}_f(S_{\nu})\subseteq G$.
\end{Def}
By \cite[Proposition 2.3(1)]{PS}, entropy-dense property holds for systems with approximate product property. And approximate product property is  weaker than specification property from their definitions. So we have the following.

\begin{Prop}\cite[Proposition 2.3 (1)]{PS}\label{proposition of entropy-dense property}
	Suppose that $(X,f)$ is a dynamical system satisfying  specification property.
	Then $(X,f)$ has entropy-dense property.
\end{Prop}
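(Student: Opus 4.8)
I would give two routes. The quickest, and the one indicated by the surrounding text, is to observe that the strong specification property implies the approximate product property: the latter is a genuine weakening in which the tracking of prescribed orbit segments is permitted to fail on a set of times of small upper density and the gap lengths between consecutive blocks are only required to be sublinear rather than a fixed constant $K_\eps$. Thus I would first recall the definition of the approximate product property, verify this essentially definitional implication, and then invoke \cite[Proposition 2.3(1)]{PS}, which already establishes entropy-density for every system with the approximate product property.

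For a self-contained argument I would instead carry out the standard specification construction, aiming at the equivalent ergodic formulation in Definition \ref{definition of entropy-dense property}: produce an ergodic $\nu$ with $h_{\nu}(f) > h_{\mu}(f) - \eta$ and $\mathcal{M}_f(S_\nu) \subseteq G$. First I reduce to ergodic measures: by the ergodic decomposition theorem, the affinity of the entropy map, and the density of finite convex combinations of ergodic measures, I select ergodic $\nu_1,\dots,\nu_k$ and rational weights $c_1,\dots,c_k$ with $\mu':=\sum_i c_i\nu_i\in G$ and $\sum_i c_i h_{\nu_i} > h_{\mu}(f)-\eta/2$. Next, for a small $\eps>0$ to be shrunk against $G$ and $\eta$, the Brin--Katok entropy formula supplies, for each $i$ and all large $n$, a family of $(n,\eps)$-separated points whose length-$n$ empirical measures lie within $\eps$ of $\nu_i$ and whose cardinality is at least $e^{n(h_{\nu_i}-\eps)}$.

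Then comes the concatenation. Using the gap constant $K_\eps$ from the specification property, I string these separated blocks together in the proportions $c_i$ over a rapidly increasing sequence of block lengths, producing a Cantor set of shadowing points (a Moran/horseshoe construction). Two requirements must hold simultaneously. First, distinct choice-sequences yield mutually separated shadowing points, so the resulting closed invariant set carries topological entropy at least $\sum_i c_i h_{\nu_i}-O(\eps) > h_{\mu}(f)-\eta$, the gaps contributing nothing asymptotically because the block lengths grow. Second, every orbit in the invariant hull spends asymptotically the proportion $c_i$ of its time $\eps$-shadowing $\nu_i$-generic segments, so every invariant measure supported there has empirical average within $O(\eps)$ of $\mu'$ and hence lies in $G$. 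Choosing an ergodic measure of near-maximal entropy on this set then gives the desired $\nu$, with $\mathcal{M}_f(S_\nu)\subseteq G$ because its support is contained in the construction.

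The main obstacle is this second requirement: ensuring that \emph{every} invariant measure carried by the constructed set, not merely the intended equidistributed ones, falls inside $G$. A pathological subsequence of block indices could otherwise yield an orbit whose empirical measures drift toward a single $\nu_i$ and escape $G$. The remedy is to force the block lengths to grow fast enough that within any sufficiently long time window the already-visited blocks realize proportions close to $(c_i)$, uniformly in the starting time; this is exactly the uniform density bookkeeping carried out in Lemmas \ref{lemma-single} and \ref{lemma-double} above, and it is what makes the weak$^*$ control persist under passage to arbitrary invariant measures.
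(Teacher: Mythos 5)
Your first route is exactly what the paper does: the proposition is stated with the citation \cite[Proposition 2.3 (1)]{PS}, and the only justification offered is the remark that the approximate product property is weaker than the specification property, so the entropy-dense property follows from that cited result. Your self-contained second sketch is a reasonable outline of the underlying argument in \cite{PS} but is not needed here (note only that the hypothesis is plain specification, not strong specification, which still suffices for the implication).
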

\begin{Prop}\label{prop-IrregularMultianalysis-22222}
	Suppose that $(X,f)$ is a dynamical system satisfying  specification property. Then for any  $n\in\N$,  there exist $f$-invariant ergodic measures $\{\omega_k\}_{k=1}^n$  such that $\{S_{\omega_k}\}_{k=1}^n$ are pairwise disjoint and $\cup_{k=1}^{n}S_{\omega_k}\neq X$.
\end{Prop}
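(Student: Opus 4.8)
The plan is to reduce the statement to producing $n+1$ ergodic measures with pairwise disjoint supports. Indeed, if $\nu_0,\nu_1,\dots,\nu_n\in\mathcal M_f^e(X)$ have pairwise disjoint supports, then putting $\omega_k=\nu_k$ for $1\le k\le n$ gives pairwise disjoint supports $S_{\omega_1},\dots,S_{\omega_n}$, and moreover $\bigcup_{k=1}^nS_{\omega_k}\subseteq X\setminus S_{\nu_0}\neq X$ since $S_{\nu_0}\neq\emptyset$. Thus the entire content lies in \emph{separating supports}, and the engine for this will be the entropy-dense property, which is available here by Proposition \ref{proposition of entropy-dense property}.

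First I would fix $n+1$ distinct invariant measures $\mu_0,\mu_1,\dots,\mu_n\in\mathcal M_f(X)$ and set $3\varepsilon:=\min_{i\neq j}d(\mu_i,\mu_j)>0$, where $d$ is the weak$^*$ metric. By the entropy-dense property (Definition \ref{definition of entropy-dense property}), for each $i$ there is $\nu_i\in\mathcal M_f^e(X)$ with $\mathcal M_f(S_{\nu_i})\subseteq B(\mu_i,\varepsilon)$; that is, \emph{every} invariant measure supported on $S_{\nu_i}$ lies within $\varepsilon$ of $\mu_i$. The crucial observation is that these supports are then forced to be pairwise disjoint. Each $S_{\nu_i}$ is closed and forward invariant (for invariant $\nu$ one has $S_\nu\subseteq f^{-1}(S_\nu)$, since $f^{-1}(S_\nu)$ is a closed set of full $\nu$-measure). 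Hence, if $S_{\nu_i}\cap S_{\nu_j}\neq\emptyset$ for some $i\neq j$, the compact forward-invariant intersection contains the nonempty compact invariant set $\bigcap_{m\ge0}f^m\big(S_{\nu_i}\cap S_{\nu_j}\big)$, which carries an invariant measure $\rho$ by Krylov--Bogolyubov. Then $\rho\in\mathcal M_f(S_{\nu_i})\cap\mathcal M_f(S_{\nu_j})\subseteq B(\mu_i,\varepsilon)\cap B(\mu_j,\varepsilon)$, forcing $d(\mu_i,\mu_j)<2\varepsilon$ and contradicting the choice of $\varepsilon$. So $S_{\nu_0},\dots,S_{\nu_n}$ are pairwise disjoint, and the reduction above concludes.

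It remains to supply $n+1$ distinct invariant measures, which is the only step using richness of the system. Since $(X,f)$ has specification it is topologically mixing, and as $X$ is nondegenerate it is infinite; a nondegenerate specification system is never uniquely ergodic (by standard specification theory the simplex $\mathcal M_f(X)$ is the Poulsen simplex and carries infinitely many ergodic measures, cf.\ \cite{Sig}). As $\mathcal M_f(X)$ is convex with more than one point, it contains infinitely many distinct measures, so selecting $n+1$ of them is harmless.

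I expect the main obstacle to be the disjointness mechanism rather than the bookkeeping. The nontrivial insight is that entropy density pins the \emph{entire} set $\mathcal M_f(S_{\nu_i})$ into a prescribed small ball $B(\mu_i,\varepsilon)$, which is precisely what converts a weak$^*$ separation of the target measures into genuine disjointness of supports, via the invariant measure $\rho$ inhabiting any hypothetical overlap. The remaining ingredients---forward invariance of supports, the Krylov--Bogolyubov argument on the intersection, and the existence of sufficiently many distinct invariant measures---are routine.
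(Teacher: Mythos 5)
Your proposal is correct and follows essentially the same route as the paper: produce $n+1$ pairwise weak$^*$-separated invariant measures (the paper uses convex combinations $\theta_k\mu+(1-\theta_k)\nu$ of two distinct invariant measures, you invoke convexity of $\mathcal M_f(X)$ more loosely), apply the entropy-dense property to place each $\mathcal M_f(S_{\omega_k})$ in a disjoint ball, deduce disjointness of the supports, and use the $(n+1)$-st support to get $\cup_{k=1}^nS_{\omega_k}\neq X$. The only difference is that you explicitly justify the step from disjointness of the sets $\mathcal M_f(S_{\omega_k})$ to disjointness of the supports via Krylov--Bogolyubov on a hypothetical overlap, a detail the paper leaves implicit.
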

\begin{proof}
	Since $(X,f)$ has specification property and $(X,d)$ be a nondegenerate $($i.e,
	with at least two points$)$ compact metric space,   then $(X,f)$  is not uniquely ergodic. Thus there exists $f$-invariant measures $\mu\neq \nu$ on $X$ such that $\nu\neq \omega.$ Choose $0<\theta_1<\theta_2<\cdots<\theta_{n+1}<1.$ 
	Let $\nu_k:=\theta_k\mu+(1-\theta_k)\nu$. Then $\nu_i\neq \nu_j$ for any $1\leq i<j\leq n+1.$
	Denote $\rho_0=\min\{\rho(\nu_i,\nu_j):1\leq i<j\leq n+1\}.$
	By Proposition \ref{proposition of entropy-dense property}, for each $k=1,  \cdots,  n+1$,   then there exists  $\omega_k\in \mathcal M_f^e(X)$ with $\mathcal M_f(S_{\omega_k})\subseteq \mathcal{B}(\nu_k,  \frac{1}{3}\rho_0).$
	Note that $\{\mathcal{B}(\nu_k,  \frac{1}{3}\rho_0)\}_{k=1}^{n+1}$ are pairwise disjoint.   So 
	$\{\mathcal M_f(S_{\omega_k})\}_{k=1}^{n+1}$ are pairwise disjoint.   As a result,
	$\{S_{\omega_k}\}_{k=1}^{n+1}$ are pairwise disjoint and $\cup_{k=1}^{n}S_{\omega_k}\neq X$.
\end{proof}

\begin{Prop}\label{proposition of distal}
	Suppose that $(X,f)$ is a dynamical system. If there is no fixed point in $X,$
	then $G_\mu$ has distal pair for any $\mu\in\mathcal M_f^e(X)$.
\end{Prop}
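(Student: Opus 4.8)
The plan is to exhibit an explicit distal pair inside $G_\mu$ of the form $(g, f(g))$ for a well-chosen generic point $g$. The one structural ingredient I would extract from the hypotheses is a uniform separation bound coming from the absence of fixed points, and everything else follows from the shift-invariance of the generic property.

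First I would set $\beta := \min_{x\in X} d(x, f(x))$. Since $f$ has no fixed point, the continuous function $x \mapsto d(x, f(x))$ is strictly positive on $X$, so compactness of $X$ forces $\beta > 0$. This says that every point is at distance at least $\beta$ from its image, i.e. consecutive points along any orbit are uniformly separated. Next I would invoke Birkhoff's ergodic theorem: because $\mu$ is ergodic, $\mu(G_\mu)=1$, so in particular $G_\mu \neq \emptyset$, and I fix any $g \in G_\mu$.

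The crucial step is to observe that $f(g)$ is again generic for $\mu$. Writing $\mathcal{E}_n(\cdot)$ for the empirical measures, one has $\mathcal{E}_n(f(g)) - \mathcal{E}_n(g) = \tfrac1n(\delta_{f^n(g)} - \delta_g)$, whose weak$^*$ distance to zero is at most $2/n$ and hence tends to $0$; therefore $\mathcal{E}_n(f(g))$ and $\mathcal{E}_n(g)$ share the same accumulation points, giving $V_f(f(g)) = V_f(g) = \{\mu\}$ and so $f(g) \in G_\mu$. Finally I would verify that $(g, f(g))$ is the desired distal pair: the two points are distinct because $g$ is not a fixed point, and since $f^i(f(g)) = f^{i+1}(g)$ we get $d(f^i(g), f^i(f(g))) = d(f^i(g), f^{i+1}(g)) \geq \beta$ for every $i$, whence $\liminf_{i\to\infty} d(f^i(g), f^i(f(g))) \geq \beta > 0$.

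I do not expect a genuine obstacle here. The only point needing any care is the short estimate showing that applying $f$ to a generic point preserves genericity, which reduces to the elementary $2/n$ bound above; the distality then drops out immediately from the uniform separation constant $\beta$.
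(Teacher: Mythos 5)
Your proof is correct and follows essentially the same route as the paper: both take a generic point $g$, use that $f(g)$ is again generic, and derive distality of the pair $(g,f(g))$ from the absence of fixed points. The only cosmetic difference is that you obtain the separation directly from the positive minimum of $x\mapsto d(x,f(x))$ on the compact space, whereas the paper argues by contradiction that a non-distal pair would force a fixed point in $\omega_f(g)$.
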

\begin{proof}
	$G_\mu\neq \emptyset$ since $\mu\in\mathcal{M}_f^e(X)$. Let $x\in  G_\mu$, then $f(x)\in G_\mu$. Assume that $x, f(x)$ is not distal, then $\omega_f(x)$ contains a fixed point.
\end{proof}

It is easy to check Theorem \ref{thm-NR-new} and \ref{thm-NR} can be deduced from Theorem \ref{thm-Ir} or \ref{thm-Level} so we only need to show Theorem \ref{thm-Ir} or \ref{thm-Level}.

\subsection{Proof of Theorem \ref{thm-Ir}}
Now we state a abstract result on strongly distributional chaos of $\{x\in X\mid x \text{ satisfies case } (i)\}\cap NR(f)\cap I_{\varphi}(f).$ Combining with Theorem \ref{thm-2} and Theorem \ref{thm-3}, we have Theorem \ref{thm-Ir}.
\begin{Thm}\label{thm-Is}
	Suppose $(X,  f)$ is positively expansive, mixing and has the exponential shadowing property.  Let $\varphi$ be a continuous function on $X$. If $I_{\varphi}(f)\neq\emptyset$, then   $\{x\in X\mid x \text{ satisfies case } (i)\}\cap NR(f)\cap I_{\varphi}(f)$ is strongly distributional chaotic for any $i\in\{1,2,3,4,5,6,1',2',3',4',5',6'\}.$
\end{Thm}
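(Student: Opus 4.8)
The plan is to reduce the statement, for each fixed case $(i)$, to a single application of Lemma \ref{DC1-omega-Lemma}(b). It suffices to produce an internally chain mixing set $\widehat\Lambda\subsetneq X$ and a non-empty compact connected $K\subseteq\mathcal M_f(\widehat\Lambda)$ with four features. First, via Lemma \ref{G_K-subset-omega} together with Proposition \ref{density-measure-prop}, every $x\in G_K^{\widehat\Lambda}$ should realize exactly the equalities and strict inclusions among $\omega_{\underline d}(x),\,\omega_{\overline d}(x),\,\omega_{B^*}(x)=C^*_{\widehat\Lambda}$ and $\omega_f(x)=\widehat\Lambda$ that define case $(i)$, and moreover satisfy $\omega_{B_*}(x)=\emptyset$. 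Second, $K$ should contain a measure $\mu=\tfrac12\mu_1+\tfrac12\mu_2$ with $\mu_1,\mu_2$ ergodic; since $\widehat\Lambda$ will contain no fixed point, Proposition \ref{proposition of distal} then supplies distal pairs in $G_{\mu_1}$ and $G_{\mu_2}$. Third, $K$ should not lie in a level set of $\nu\mapsto\int\varphi\,d\nu$, which forces $x\in I_\varphi(f)$ for every $x\in G_K^{\widehat\Lambda}$, because $\liminf$ and $\limsup$ of the $\varphi$-Birkhoff averages of $x$ equal $\min_{\nu\in K}\int\varphi\,d\nu$ and $\max_{\nu\in K}\int\varphi\,d\nu$. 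Granting such a pair $(\widehat\Lambda,K)$, I apply Lemma \ref{DC1-omega-Lemma}(b) with any non-empty open $U\subseteq X\setminus\widehat\Lambda$: for every $\alpha\in\mathcal A$ it yields an uncountable $\alpha$-DC1-scrambled set $S^{\alpha}\subseteq G_K^{\widehat\Lambda}\cap U$, and $U\subseteq X\setminus\widehat\Lambda=X\setminus\omega_f(\,\cdot\,)$ places $S^{\alpha}$ in $NR(f)$. Thus $\{x:\text{case }(i)\}\cap NR(f)\cap I_\varphi(f)$ has an uncountable $\alpha$-DC1-scrambled subset for every $\alpha\in\mathcal A$, i.e.\ it is strongly distributional chaotic.

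It remains to construct $(\widehat\Lambda,K)$. Since $I_\varphi(f)\neq\emptyset$ we have $Int(L_\varphi)\neq\emptyset$, so invariant measures of distinct $\varphi$-integral exist. For the six unprimed cases I take $\widehat\Lambda=\Omega$ to be a strong-specification set from Lemma \ref{strong-elementary-entropy-dense} (applied to the convex hull of two measures of different $\varphi$-integral); then $C^*_\Omega=\Omega$ by Lemma \ref{lemma-full}, and $\Omega$ has no fixed point. For the six primed cases I use Lemma \ref{lambda-theta-level} to obtain $\Omega\subsetneq\Theta$ with $\Theta\in ICM$, $C^*_\Theta=\Omega\subsetneq\Theta$ and $\mathcal M_f(\Theta)=\mathcal M_f(\Omega)$, and set $\widehat\Lambda=\Theta$; this is precisely the device that converts $\omega_{B^*}=C^*_{\widehat\Lambda}=\omega_f$ into $\omega_{B^*}=C^*_{\widehat\Lambda}\subsetneq\omega_f$, producing the primed cases from the unprimed ones. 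In either situation $\omega_{B_*}(x)=\bigcap_{\nu\in\mathcal M_f(\Omega)}S_\nu=\emptyset$ by Proposition \ref{prop-IrregularMultianalysis-22222}, which provides ergodic measures on $\Omega$ with disjoint supports.

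With the ambient set fixed, I build $K$ as a convex hull or sub-segment of finitely many ergodic measures inside $\Omega$, using a $\varphi$-refined form of Proposition \ref{prop-IrregularMultianalysis-22222}: choosing the two base measures in its proof to have distinct $\varphi$-integrals yields ergodic $\omega_1,\omega_2,\omega_3$ with pairwise disjoint supports $S_1,S_2,S_3$, pairwise distinct $\varphi$-integrals, and $\overline{S_1\cup S_2\cup S_3}\subsetneq\Omega$, together with a full-support ergodic $\rho$ from Lemmas \ref{lemma-full}--\ref{lemma-ergo}. Recalling that $\bigcap_{\nu\in K}S_\nu=\bigcap_j S_{\mu_j}$ and $\overline{\bigcup_{\nu\in K}S_\nu}=\overline{\bigcup_j S_{\mu_j}}$ for $K=\mathrm{cov}\{\mu_1,\dots,\mu_k\}$, I select: for Case $(6)$, $K=\mathrm{cov}\{\rho,\rho'\}$ with full-support $\rho,\rho'$ of different $\varphi$-integral, so $\omega_{\underline d}=\omega_{\overline d}=C^*$; for Case $(2)$, $K=\mathrm{cov}\{\rho,\omega_1,\omega_2\}$, so $\omega_{\underline d}=\emptyset\subsetneq\omega_{\overline d}=C^*$; for Case $(3)$, $K=\mathrm{cov}\{\rho,\omega_1\}$, so $\emptyset\neq\omega_{\underline d}=S_1\subsetneq\omega_{\overline d}=C^*$; for Case $(1)$, the sub-segment $K=\{t\omega_1+(1-t)\omega_2: t_0\le t\le t_1\}$ with $0<t_0<t_1<1$, of constant support, so $\omega_{\underline d}=\omega_{\overline d}=S_1\cup S_2\subsetneq C^*$; for Case $(4)$, $K=\mathrm{cov}\{\omega_1,\omega_2\}$, so $\omega_{\underline d}=\emptyset\subsetneq\omega_{\overline d}=S_1\cup S_2\subsetneq C^*$; and for Case $(5)$, $K=\mathrm{cov}\{\tfrac12\omega_1+\tfrac12\omega_2,\ \tfrac12\omega_1+\tfrac12\omega_3\}$, so $\emptyset\neq\omega_{\underline d}=S_1\subsetneq\omega_{\overline d}=S_1\cup S_2\cup S_3\subsetneq C^*$. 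The same six choices serve the primed cases with $\widehat\Lambda=\Theta$. In every case two of the chosen measures have distinct $\varphi$-integrals (giving the $I_\varphi$-condition), and $K$ contains a non-trivial combination of two ergodic measures (giving the distal-pair condition).

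I expect the crux to be the feasibility of the $\varphi$-refinement in the cases $\omega_{\overline d}\subsetneq\omega_{B^*}$, namely Cases $(1),(4),(5)$ and their primes: there no vertex of $K$ may have full support, yet $K$ must still contain measures of distinct $\varphi$-integral, so the proper subsystem $\omega_{\overline d}=\overline{\bigcup_j S_j}$ itself must carry the required $\varphi$-spread while remaining a strict subset of the measure centre. This is exactly what the strengthened Proposition \ref{prop-IrregularMultianalysis-22222} delivers — disjoint supports with prescribed, pairwise distinct $\varphi$-values, all avoiding fixed points — and verifying it carefully is the only genuinely non-routine step. Everything else is a bookkeeping check that the displayed support identities match the twelve cases through Lemma \ref{G_K-subset-omega}, after which Theorem \ref{thm-Ir} follows by combining Theorem \ref{thm-Is} with Theorems \ref{thm-2} and \ref{thm-3}.
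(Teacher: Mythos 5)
Your proposal is correct and follows essentially the same route as the paper: obtain $\Lambda\subsetneq\Theta$ from Lemma \ref{lambda-theta-level}, pick ergodic measures with disjoint supports (Proposition \ref{prop-IrregularMultianalysis-22222}) and full-support measures (Lemmas \ref{lemma-full}--\ref{lemma-ergo}) with distinct $\varphi$-integrals, assemble for each case a compact connected $K$ whose support intersection/union matches that case via Lemma \ref{G_K-subset-omega}, and invoke Lemma \ref{DC1-omega-Lemma}(b) with $U\subseteq X\setminus\widehat\Lambda$ and Proposition \ref{proposition of distal}. Your concrete choices of $K$ differ only cosmetically from the paper's $K_1,\dots,K_6$ (e.g.\ a $2$-simplex instead of a union of two segments for Case (2)), and your explicit attention to $\omega_{B_*}(x)=\emptyset$ and to the $\varphi$-separation of the integrals is a point the paper leaves implicit.
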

\begin{proof}
	$I_{\varphi}(f)\neq\emptyset$ implies $Int(L_\varphi)\neq\emptyset$. Then there are $\Lambda\subsetneq \Theta \subsetneq X$ satisfying Lemma \ref{lambda-theta-level}. By Proposition \ref{prop-IrregularMultianalysis-22222}, Lemma \ref{lemma-full} and Lemma \ref{lemma-ergo}, we can take $\mu_1,\mu_2\in\mathcal M_f^e(X)$ with disjoint support, and $\mu,\nu\in\mathcal M_f^e(\Lambda)$ with full support satisfying
	$$\int\varphi d\mu_1,\ \int\varphi d\mu_2,\ \int\varphi d\nu,\ \text{and}\ \int\varphi d\mu\ \text{are\ mutually\ unequal}.$$
	Also, $S_{\mu_1}\cup S_{\mu_2}\neq \Lambda$ by Proposition \ref{prop-IrregularMultianalysis-22222}. Let $0<\theta_1<\theta_2<1$ and
	\begin{align*}
		K_1\ &:=\ \ \mathrm{cov}\{\theta_1\mu_1+(1-\theta_1)\mu_2,\theta_2\mu_1+(1-\theta_2)\mu_2\},\\
		K_2\ &:=\ \ \mathrm{cov}\{\mu_1,\mu\}\cup\mathrm{cov}\{\mu_2,\mu\},\\
		K_3\ &:=\ \ \mathrm{cov}\{\mu_1,\theta_1\mu_1+(1-\theta_1)\mu\},\\
		K_4\ &:=\ \ \mathrm{cov}\{\mu_1,\mu_2\},\\
		K_5\ &:=\ \ \mathrm{cov}\{\mu_1,\theta_1\mu_1+(1-\theta_1)\mu_2\},\\
		K_6\ &:=\ \ \mathrm{cov}\{\mu,\nu\}.
	\end{align*}
	Now, using Lemma \ref{DC1-omega-Lemma} on $K_i,\ i=1,2,3,4,5,6$ and $\Lambda$, we get $\{x\in X\mid x \text{ satisfies case } (i)\}\cap NR(f)\cap I_{\varphi}(f)$ is strongly distributional chaotic for any $i\in\{1,2,3,4,5,6\}$ by Lemma \ref{G_K-subset-omega} and Proposition \ref{proposition of distal}. Using Lemma \ref{DC1-omega-Lemma} on $K_i,\ i=1,2,3,4,5,6$ and $\Theta$, we get $\{x\in X\mid x \text{ satisfies case } (i)\}\cap NR(f)\cap I_{\varphi}(f)$ is strongly distributional chaotic for any $i\in\{1',2',3',4',5',6'\}$ by Lemma \ref{G_K-subset-omega} and Proposition \ref{proposition of distal}. 
\end{proof}

\subsection{Proof of Theorem \ref{thm-Level}}
Now we state a abstract result on strongly distributional chaos of $\{x\in X\mid x \text{ satisfies case } (i)\}\cap NR(f)\cap R_\varphi(a).$ Combining with Theorem \ref{thm-2} and Theorem \ref{thm-3}, we have Theorem \ref{thm-Level}.
\begin{Thm}\label{thm-It}
	Suppose $(X,  f)$ is positively expansive, mixing and has the exponential shadowing property.  Let $\varphi$ be a continuous function on $X$. If $Int(L_{\varphi})\neq\emptyset$, then for any $a\in Int(L_{\varphi})$,   $\{x\in X\mid x \text{ satisfies case } (i)\}\cap NR(f)\cap R_\varphi(a)$ is strongly distributional chaotic for any $i\in\{1,2,3,4,5,6,1',2',3',4',5',6'\}.$
\end{Thm}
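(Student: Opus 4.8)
My plan is to reproduce the proof of Theorem~\ref{thm-Is} essentially verbatim, altering only the $\varphi$-integrals of the measures. There the ergodic measures $\mu_1,\mu_2,\mu,\nu$ were chosen with \emph{mutually distinct} integrals, so that the Birkhoff averages along each $G_{K_i}^{\Lambda}$ diverge and $G_{K_i}^{\Lambda}\subseteq I_\varphi(f)$. Here I would instead require
$$\int\varphi\,d\mu_1=\int\varphi\,d\mu_2=\int\varphi\,d\mu=\int\varphi\,d\nu=a,$$
so that \emph{every} $\rho\in K_i$ satisfies $\int\varphi\,d\rho=a$. For $x\in G_{K_i}^{\Lambda}$ one has $V_f(x)=K_i$, hence $\frac1n\sum_{j=0}^{n-1}\varphi(f^jx)=\int\varphi\,d\mathcal E_n(x)$ accumulates precisely on $\{\int\varphi\,d\rho:\rho\in K_i\}=\{a\}$; the limit therefore exists and equals $a$, i.e.\ $G_{K_i}^{\Lambda}\subseteq R_\varphi(a)$. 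With this single change, the identical six convex sets $K_1,\dots,K_6$ of Theorem~\ref{thm-Is}, fed into Lemma~\ref{DC1-omega-Lemma} with $\Lambda$ for the unprimed cases and with $\Theta$ for the primed ones (note $\mathcal M_f(\Theta)=\mathcal M_f(\Lambda)$), and decoded through Lemma~\ref{G_K-subset-omega}, yield all twelve sets; membership in $NR(f)$ is forced by taking $U\subseteq X\setminus\Lambda$ (resp.\ $U\subseteq X\setminus\Theta$), and exponential shadowing upgrades each DC1-scrambled set to an $\alpha$-DC1-scrambled set for every $\alpha\in\mathcal A$.

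Thus everything reduces to exhibiting measures of integral exactly $a$ with the six prescribed support profiles. I would start from the pair $\Lambda\subsetneq\Theta\subsetneq X$ of Lemma~\ref{lambda-theta-level}, with $\Lambda,\Theta\in ICM$, $C^*_\Lambda=C^*_\Theta=\Lambda$, no fixed points, and $a\in Int(L^\Lambda_\varphi)$. A full-support measure of integral $a$ is immediate: full-support ergodic measures are residual in $\mathcal M_f(\Lambda)$ (Lemmas~\ref{lemma-full} and~\ref{lemma-ergo}), while $\{\rho:\int\varphi\,d\rho<a\}$ and $\{\rho:\int\varphi\,d\rho>a\}$ are nonempty open, so I may pick full-support ergodic $p_-,p_+$ with $\int\varphi\,dp_-<a<\int\varphi\,dp_+$ and put $\mu:=tp_-+(1-t)p_+$ with $t$ tuned so $\int\varphi\,d\mu=a$ (and similarly $\nu$); by Proposition~\ref{proposition of distal} both $G_{p_\pm}$ carry distal pairs, so $\mu$ meets the decomposition hypothesis of Lemma~\ref{DC1-omega-Lemma}. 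The genuinely new point is the construction of the proper-support measures $\mu_1,\mu_2$ of integral exactly $a$ carried on \emph{disjoint} proper subsystems $\Gamma_1,\Gamma_2\subsetneq\Lambda$: a uniquely ergodic subsystem (in particular a periodic orbit) cannot carry a measure of integral $a$ unless its unique average already equals $a$, so I need proper subsystems whose invariant measures fill an interval straddling $a$.

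To build them I would re-run the construction underlying Lemma~\ref{lambda-theta-level}, which realizes $\Lambda$ through Lemma~\ref{strong-elementary-entropy-dense} as a factor $\Lambda=\pi(\Sigma)$ of an irreducible subshift of finite type over an alphabet of periodic orbits, but with a richer alphabet. Using the specification of $\Lambda$ (Lemma~\ref{lemma-single}) choose four periodic points $q^{(i)}_-,q^{(i)}_+$ $(i=1,2)$ with pairwise disjoint orbits and $\int\varphi\,d\delta_{q^{(i)}_-}<a<\int\varphi\,d\delta_{q^{(i)}_+}$, and design the transition graph of $\Sigma$ so that it contains two vertex-disjoint strongly connected induced subgraphs, the $i$-th on $\{q^{(i)}_-,q^{(i)}_+\}$ (whose sub-SFTs $\Sigma_1,\Sigma_2$ are then disjoint), with bridge edges rendering the whole graph mixing and leaving $\Gamma_1\cup\Gamma_2$ proper. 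Then $\Gamma_i:=\pi(\Sigma_i)\subseteq\Lambda$ are disjoint proper subsystems; each is a factor of a mixing SFT, hence has strong specification (so Lemmas~\ref{lemma-full} and~\ref{lemma-ergo} apply to it), and each carries $\pi_*\delta_{q^{(i)}_-},\pi_*\delta_{q^{(i)}_+}$ with integrals on opposite sides of $a$, whence $a\in Int(L^{\Gamma_i}_\varphi)$. Combining a straddling pair of full-support ergodic measures of $\Gamma_i$ produces $\mu_i$ with $S_{\mu_i}=\Gamma_i$ and $\int\varphi\,d\mu_i=a$, and its ergodic straddling decomposition again supplies the distal hypothesis of Lemma~\ref{DC1-omega-Lemma}. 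Plugging $\mu_1,\mu_2,\mu,\nu$ into the original $K_1,\dots,K_6$ and copying the argument of Theorem~\ref{thm-Is} finishes the proof.

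The main obstacle is precisely this last construction. In the irregular-set situation the building blocks need only have \emph{distinct} integrals, which the residual supply of ergodic measures furnishes for free; the level set pins the proper-support blocks to integral \emph{exactly} $a$, unattainable from a uniquely ergodic subsystem, and thereby forces the two disjoint, measure-rich subsystems $\Gamma_1,\Gamma_2$. The delicate verifications are that the modified graph still yields disjoint images $\pi(\Sigma_1)\cap\pi(\Sigma_2)=\emptyset$ — for which I would take the anchoring orbits pairwise farther apart than the expansive constant, so that the singleton $\pi$-fibres over $\Sigma_1$ and over $\Sigma_2$ cannot coincide — and that each $\Gamma_i$ inherits strong specification as a factor of a mixing SFT. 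Once these are in place, every remaining step is an unchanged transcription of the proof of Theorem~\ref{thm-Is}.
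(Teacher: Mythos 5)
Your overall reduction is correct and coincides with the paper's: it suffices to feed Lemma \ref{DC1-omega-Lemma} compact connected sets $K\subseteq\mathcal M_f(\Lambda)$ (resp.\ $\mathcal M_f(\Theta)$) \emph{all} of whose elements integrate $\varphi$ to $a$ — so that $V_f(x)=K$ forces $\lim_n\frac1n\sum_{i=0}^{n-1}\varphi(f^ix)=a$ — with support profiles dictated by Lemma \ref{G_K-subset-omega}; and your full-support measures $\mu,\nu$ of integral $a$ are built exactly as in the paper. The genuine gap is in your construction of the two properly and \emph{disjointly} supported measures $\mu_1,\mu_2$ of integral $a$. You reduce this to producing two disjoint transitive subsystems $\Gamma_i=\pi(\Sigma_i)\subsetneq\Lambda$ with specification, and you secure $\pi(\Sigma_1)\cap\pi(\Sigma_2)=\emptyset$ by requiring the four anchoring periodic orbits to be pairwise farther apart than the expansive constant. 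That separation is in general unattainable: a common shadowing point only yields sequences $\theta^1\in\Sigma_1$, $\theta^2\in\Sigma_2$ with $d(\theta^1_i,\theta^2_i)\le 2e/3$ for all $i$, so you need \emph{every} point of the first pair of orbits to lie at distance greater than $2e/3$ from \emph{every} point of the second pair; but already in the full two-shift with the metric of Section \ref{section-application}, any two non-fixed periodic orbits contain points agreeing in their first symbol, hence at distance at most $1/4$, while $e$ may be taken arbitrarily close to $1/2$ (and fixed points are excluded from $\Lambda$). Worse, orbits produced by Lemma \ref{lemma-single} satisfy $d_H(orb(p),\Lambda)<\eps$, i.e.\ they are $\eps$-dense in $\Lambda$ and therefore within $2\eps$ of one another. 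So the disjointness $S_{\mu_1}\cap S_{\mu_2}=\emptyset$, on which Cases (2) and (4) (and their primed versions) rest via Lemma \ref{G_K-subset-omega}, is not established.

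The paper shows that this entire subsystem construction is unnecessary, because the support of an invariant measure need not be transitive, let alone carry specification. It takes four \emph{ergodic} measures $\mu_1,\mu_2,\mu_3,\mu_4\in\mathcal M_f^e(\Lambda)$ with mutually disjoint supports and $\int\varphi\,d\mu_1<\int\varphi\,d\mu_2<a<\int\varphi\,d\mu_3<\int\varphi\,d\mu_4$ (an extension of Proposition \ref{prop-IrregularMultianalysis-22222} obtained from entropy-density), and sets $\nu_1=\theta_1\mu_1+(1-\theta_1)\mu_3$, $\nu_2=\theta_2\mu_2+(1-\theta_2)\mu_4$, $\nu_3=\theta_3\mu_1+(1-\theta_3)\mu_4$ with the $\theta_i$ tuned so that each integrates $\varphi$ to $a$. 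These have proper, pairwise disjoint supports ($S_{\mu_1}\cup S_{\mu_3}$ and $S_{\mu_2}\cup S_{\mu_4}$), their ergodic components supply the distal pairs required by Lemma \ref{DC1-omega-Lemma} through Proposition \ref{proposition of distal}, and the choices $K_1=\{\nu_1\}$, $K_2=\mathrm{cov}\{\nu_1,\mu\}\cup\mathrm{cov}\{\nu_2,\mu\}$, $K_3=\mathrm{cov}\{\nu_1,\mu\}$, $K_4=\mathrm{cov}\{\nu_1,\nu_2\}$, $K_5=\mathrm{cov}\{\nu_1,\nu_3\}$, $K_6=\{\mu\}$ then deliver the twelve cases exactly along the lines of your plan. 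If you wish to keep the two-subsystem picture, disjointness must be forced at the level of invariant measures (arranging $\mathcal M_f(\Gamma_1)\cap\mathcal M_f(\Gamma_2)=\emptyset$ by keeping the two convex hulls of anchoring periodic-orbit measures far apart in $\mathcal M(X)$), not by separating the orbits in $X$.
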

\begin{proof}
	Take $\Lambda\subsetneq \Theta \subsetneq X$ satisfying Lemma \ref{lambda-theta-level}. By Proposition \ref{prop-IrregularMultianalysis-22222}, we can take $\mu_1,\mu_2,\mu_3,\mu_4\in  \mathcal M_f^e(\Lambda)$ with mutually disjoint support satisfying
	$$\int\varphi d\mu_1<\int\varphi d\mu_2<a<\int\varphi d\mu_3<\int\varphi d\mu_4.$$
	Similarly, $\bigcup_{i=1}^{4}S_{\mu_i}\neq \Lambda$ by Proposition \ref{prop-IrregularMultianalysis-22222}. Now, we can choose proper $\{\theta_i\}_{i=1}^{3}\subseteq(0,1)$ such that and
	$$\theta_1\int\varphi d\mu_{1}+(1-\theta_1)\int\varphi d\mu_{3}=a;$$
	$$\theta_2\int\varphi d\mu_{2}+(1-\theta_2)\int\varphi d\mu_{4}=a;$$
	$$\theta_3\int\varphi d\mu_{1}+(1-\theta_3)\int\varphi d\mu_{4}=a.$$
	Denote
	$$\nu_1=\theta_1\mu_{1}+(1-\theta_1)\mu_{3};$$
	$$\nu_2=\theta_2\mu_{2}+(1-\theta_2)\mu_{4};$$
	$$\nu_3=\theta_3\mu_{1}+(1-\theta_3)\mu_{4}.$$
	By Lemma \ref{lemma-full} and Lemma \ref{lemma-ergo}, there are $\omega_1,\omega_2\in\mathcal M_f^e(X)$ with full support and 
	$$\int\varphi d\omega_1<a<\int\varphi d\omega_2.$$ Now, we can choose proper $\theta\subseteq(0,1)$ such that $$\theta\int\varphi d\omega_{1}+(1-\theta)\int\varphi d\omega_{2}=a.$$
	Denote $\mu=\theta\omega_1+(1-\theta)\omega_2.$
	Let
	\begin{align*}
		K_1\ &:=\ \ \{\nu_1\},\\
		K_2\ &:=\ \ \mathrm{cov}\{\nu_1,\mu\}\cup\mathrm{cov}\{\nu_2,\mu\},\\
		K_3\ &:=\ \ \mathrm{cov}\{\nu_1,\mu\},\\
		K_4\ &:=\ \ \mathrm{cov}\{\nu_1,\nu_2\},\\
		K_5\ &:=\ \ \mathrm{cov}\{\nu_1,\nu_3\},\\
		K_5\ &:=\ \ \{\mu\}.
	\end{align*}
    Now, using Lemma \ref{DC1-omega-Lemma} on $K_i,\ i=1,2,3,4,5,6$ and $\Lambda$, we get $\{x\in X\mid x \text{ satisfies case } (i)\}\cap NR(f)\cap R_\varphi(a)$ is strongly distributional chaotic for any $i\in\{1,2,3,4,5,6\}$ by Lemma \ref{G_K-subset-omega} and Proposition \ref{proposition of distal}. Using Lemma \ref{DC1-omega-Lemma} on $K_i,\ i=1,2,3,4,5,6$ and $\Theta$, we get $\{x\in X\mid x \text{ satisfies case } (i)\}\cap NR(f)\cap R_\varphi(a)$ is strongly distributional chaotic for any $i\in\{1',2',3',4',5',6'\}$ by Lemma \ref{G_K-subset-omega} and Proposition \ref{proposition of distal}.  
\end{proof}

\subsection{Recurrent Points That Are Not Transitive}\label{recnotdense}
Recall that $$ND=NR\sqcup(Rec\setminus Tran).$$ In previous sections, we have obtained that for mixing expanding maps or  transitive Anosov diffeomorphisms, the set of non-recurrent set $NR(f)$ is strongly distributional chaotic. 
In this subsection we show that $Rec\setminus Tran$ is distributional chaotic of type 1.  
\begin{Thm}\label{thm-lv}
	Suppose $(X,  f)$ is positively expansive, mixing and has the shadowing property.  Let $\varphi$ be a continuous function on $X$. If $I_{\varphi}(f)\neq\emptyset$, then  for any $i\in\{1,2,3,4,5,6\}$
	\begin{description}
		\item[(I)] $\{x\in X\mid x \text{ satisfies case } (i)\}\cap (Rec(f)\setminus Tran(f))\cap I_{\varphi}(f)$ contains an uncountable DC1-scrambled set,
		\item[(II)] $\{x\in X\mid x \text{ satisfies case } (i)\}\cap (Rec(f)\setminus Tran(f))\cap R_\varphi(a)$ contains an uncountable DC1-scrambled set for any $a\in Int(L_{\varphi}).$
	\end{description}
\end{Thm}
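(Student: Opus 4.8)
The plan is to mirror the proofs of Theorems \ref{thm-Is} and \ref{thm-It}, but to force the scrambled set to live inside a proper closed invariant subset, since that is exactly what produces recurrence together with non-transitivity. Because $I_{\varphi}(f)\neq\emptyset$ gives $Int(L_\varphi)\neq\emptyset$, I would first apply Lemma \ref{lambda-theta-level} to obtain a proper closed invariant $\Lambda\subsetneq X$ carrying the strong specification property, with no fixed point, $C^*_\Lambda=\Lambda$, and $a\in Int(L^\Lambda_\varphi)$. The six unprimed cases are precisely those for which $\omega_{B^*}(x)=\omega_f(x)$, i.e. $C^*_x=\omega_f(x)$; by Lemma \ref{G_K-subset-omega} (used with $C^*_\Lambda=\Lambda$) they are realized by points $x$ with $V_f(x)=K_i$ and $\omega_f(x)=\Lambda$. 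For such a point, $x$ is recurrent and non-transitive as soon as $x\in\Lambda$, because then $x\in\Lambda=\omega_f(x)\subsetneq X$.

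Second, I would produce the scrambled set inside $\Lambda$ rather than inside $X\setminus\Lambda$ (the latter being what yields non-recurrent points in Lemma \ref{DC1-omega-Lemma}). The subsystem $(\Lambda,f|_\Lambda)$ is positively expansive (inherited from $X$) and transitive (being mixing, via strong specification); granting in addition that it carries the shadowing property---which I expect to follow from the positively-expansive, strong-specification symbolic model of Lemma \ref{strong-elementary-entropy-dense}---Lemma \ref{DC1-omega-Lemma}(a) applies with ambient system $(\Lambda,f|_\Lambda)$ and with $\Lambda$ itself as the internally chain mixing set. Choosing the same convex combinations $K_1,\dots,K_6\subseteq\mathcal M_f(\Lambda)$ as in the proof of Theorem \ref{thm-Is} (for part (I)) and of Theorem \ref{thm-It} (for part (II)), and taking $U$ to be a nonempty open subset of $\Lambda$, I obtain for each $i$ an uncountable DC1-scrambled set $S\subseteq G_{K_i}^\Lambda\subseteq\Lambda$. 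The distal pairs in the generic sets $G_{\mu_j}$ that Lemma \ref{DC1-omega-Lemma} requires are supplied by Proposition \ref{proposition of distal}, because $\Lambda$ has no fixed point.

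It then remains to read off the three membership conditions. Every $x\in S$ lies in $\Lambda=\omega_f(x)\subsetneq X$, so $x\in Rec(f)\setminus Tran(f)$; Lemma \ref{G_K-subset-omega} together with $C^*_\Lambda=\Lambda$ shows that $x$ satisfies case $(i)$; and the $\varphi$-integrals of the measures generating $K_i$ force the Birkhoff averages of $\varphi$ along the orbit of $x$ either to diverge, giving $x\in I_{\varphi}(f)$ for part (I) (the $K_i$ are built from measures with mutually distinct $\varphi$-integrals), or to converge to $a$, giving $x\in R_\varphi(a)$ for part (II) (the $K_i$ are built from measures all of whose $\varphi$-integrals equal $a$). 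This is exactly the bookkeeping already carried out in Theorems \ref{thm-Is} and \ref{thm-It}, now performed inside $\Lambda$.

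The main obstacle is the confinement step: making the scrambled points genuinely lie in $\Lambda$, equivalently running the construction of Lemma \ref{DC1-omega-Lemma}(a) with $(\Lambda,f|_\Lambda)$ as the ambient space. This needs the shadowing property for $(\Lambda,f|_\Lambda)$, so that a pseudo-orbit built from periodic points of $\Lambda$ is shadowed by a point of $\Lambda$ and not merely of $X$; this must be extracted from the positively-expansive, strong-specification symbolic model of Lemma \ref{strong-elementary-entropy-dense}. The same difficulty also explains why only the six unprimed cases appear: the primed cases require $\omega_f(x)=\Theta$ with $C^*_x=\Lambda\subsetneq\Theta$, so recurrence would force $x\in\Theta$, but $\Theta$ is only internally chain mixing and need not carry any shadowing property, so there is no mechanism to confine a recurrent scrambled set to it.
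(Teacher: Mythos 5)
Your overall strategy coincides with the paper's: confine everything to the proper invariant set $\Lambda\subsetneq X$ produced by Lemma \ref{lambda-theta-level}, use transitive points of the subsystem $(\Lambda,f|_\Lambda)$ (which are automatically in $Rec(f)\setminus Tran(f)$ since $x\in\omega_f(x)=\Lambda\subsetneq X$), and read off case $(i)$, $I_\varphi(f)$ or $R_\varphi(a)$ from Lemma \ref{G_K-subset-omega} and the choice of $K_1,\dots,K_6$ exactly as in Theorems \ref{thm-Is} and \ref{thm-It}. That bookkeeping, and your explanation of why only the unprimed cases survive, are both correct.

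The gap is the step you yourself flag as the ``main obstacle'': producing the uncountable DC1-scrambled set inside $\Lambda$ by rerunning Lemma \ref{DC1-omega-Lemma}(a) with $(\Lambda,f|_\Lambda)$ as the ambient system. That lemma genuinely needs the shadowing property of the ambient system (its proof runs through s-limit-shadowing), and the shadowing property of $(\Lambda,f|_\Lambda)$ is neither established in the paper nor plausibly ``extracted from the symbolic model'' of Lemma \ref{strong-elementary-entropy-dense}: there $\Lambda$ is only a \emph{factor} of a mixing subshift of finite type via $\theta\mapsto Y_\theta$, and this map need not be injective (distinct symbols in the alphabet $I$ may be within $2e/3$ of each other), so $(\Lambda,f|_\Lambda)$ need not be conjugate to an SFT. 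Factors of SFTs can fail to have shadowing --- by the Walters characterization the paper itself cites, a subshift has shadowing iff it is of finite type, and strictly sofic factors exist. What Lemma \ref{lambda-theta-level} does guarantee for $\Lambda$ is the \emph{strong specification property}, and the paper exploits exactly that: its proof of Theorem \ref{thm-lv} invokes \cite[Theorem F]{CT} (stated as Lemma \ref{lemma-A}), which for a system with specification and the same distal-pair hypothesis yields an uncountable DC1-scrambled set $S_K\subseteq G_K\cap Tran$, with no shadowing assumption. Replacing your confinement step by an appeal to that result closes the gap and recovers the paper's argument; note this is also why the theorem only claims DC1 (not strong distributional chaos) for $Rec\setminus Tran$ --- the exponential-shadowing machinery behind Lemma \ref{DC1-omega-Lemma}(b) is unavailable inside $\Lambda$.
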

\begin{proof}
	We first recall the following result.
	\begin{Lem}\cite[Theorem F]{CT}\label{lemma-A}
		Suppose that $(X, f)$ is a dynamical system with the specification property and let $K$ be a connected non-empty compact subset of $\mathcal{M}_{f}(X)$. If there is a $\mu \in K$ such that $\mu=\theta \mu_{1}+(1-\theta) \mu_{2}\left(\mu_{1}=\mu_{2}\right.$ could happen $)$ where $\theta \in[0,1]$, and $G_{\mu_{1}}$, $G_{\mu_{2}}$ both have distal a pair, then there exists an uncountable DC1-scrambled set $S_{K} \subseteq G_{K} \cap Tran(f).$
	\end{Lem}
    Note that for any invariant compact subset $\Lambda\subsetneq X,$ we have $Tran(f|_\Lambda)\subseteq Rec(f)\setminus Tran(f).$
    Now, using Lemma \ref{lemma-A} on $K_i$ and $\Lambda$ of Theorem \ref{thm-Is}, we get $\{x\in X\mid x \text{ satisfies case } (i)\}\cap (Rec(f)\setminus Tran(f))\cap I_{\varphi}(f)$ contains an uncountable DC1-scrambled set for any $i\in\{1,2,3,4,5,6\}$ by Lemma \ref{G_K-subset-omega} and Proposition \ref{proposition of distal}.  Using Lemma \ref{lemma-A} on $K_i$ and $\Lambda$ of Theorem \ref{thm-It}, we get $\{x\in X\mid x \text{ satisfies case } (i)\}\cap (Rec(f)\setminus Tran(f))\cap R_\varphi(a)$ contains an uncountable DC1-scrambled set for any $i\in\{1,2,3,4,5,6\}$ by Lemma \ref{G_K-subset-omega} and Proposition \ref{proposition of distal}. 
\end{proof}

\section{Other Dynamical Systems}\label{section-application}
In this section, we apply the results in the previous sections to more systems, including  mixing subshifts of finite type, $\beta$-shifts and nonuniformly hyperbolic systems. Before that, we give a abstract result as a generalization of Theorem \ref{thm-Is}, \ref{thm-It} and \ref{thm-lv}.
\begin{Thm}\label{thm-Iu}
	Suppose that $(X,f)$ is a dynamical system, and $Y$ is an invariant compact subset of $X.$  Assume that $(Y,f|_Y)$ is  positively expansive, mixing and has the exponential shadowing property.
	Let $\varphi$ be a continuous function on $X$. If $I_{\varphi}(f)\cap Y\neq\emptyset$, then   for any $i\in\{1,2,3,4,5,6,1',2',3',4',5',6'\}$
	\begin{description}
		\item[(I)] $\{x\in X\mid x \text{ satisfies case } (i)\}\cap NR(f)\cap I_{\varphi}(f)$ is strongly distributional chaotic,
		\item[(II)] $\{x\in X\mid x \text{ satisfies case } (i)\}\cap NR(f)\cap R_\varphi(a)$ is strongly distributional chaotic for any $a\in Int(L_{\varphi}^Y).$
	\end{description}
    And for any $i\in\{1,2,3,4,5,6\}$
    \begin{description}
    	\item[(III)] $\{x\in X\mid x \text{ satisfies case } (i)\}\cap (Rec(f)\setminus Tran(f))\cap I_{\varphi}(f)$ contains an uncountable DC1-scrambled set,
    	\item[(IV)] $\{x\in X\mid x \text{ satisfies case } (i)\}\cap (Rec(f)\setminus Tran(f))\cap R_\varphi(a)$ contains an uncountable DC1-scrambled set  for any $a\in Int(L_{\varphi}^Y).$
    \end{description}
\end{Thm}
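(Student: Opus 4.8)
The plan is to reduce Theorem \ref{thm-Iu} to the already established Theorems \ref{thm-Is}, \ref{thm-It} and \ref{thm-lv} by applying each of them to the subsystem $(Y,f|_Y)$ in place of $X$. By hypothesis $(Y,f|_Y)$ is positively expansive, mixing and has the exponential shadowing property, so it meets all the structural assumptions of those three results (and plain shadowing follows from exponential shadowing by Remark \ref{remark-shadowing}, as needed for Theorem \ref{thm-lv}). Moreover $I_\varphi(f)\cap Y\neq\emptyset$ is exactly the statement $I_{\varphi|_Y}(f|_Y)\neq\emptyset$, which forces $Int(L_\varphi^Y)=Int(L_{\varphi|_Y})\neq\emptyset$; thus the irregular-set and level-set hypotheses are met inside $Y$.

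The bulk of the argument is a dictionary showing that every set appearing in the conclusion, when computed inside $Y$, embeds correctly into its ambient counterpart in $X$. For any $x\in Y$ the whole forward orbit stays in the invariant set $Y$, so $\omega_f(x)=\omega_{f|_Y}(x)$ and each statistical limit set $\omega_\xi(x)$ (for $\xi=\underline{d},\overline{d},B^*,B_*$) is computed identically whether one uses $f$ or $f|_Y$; consequently $x\in Y$ satisfies case $(i)$ for $f$ precisely when it does for $f|_Y$. Recurrence is equally intrinsic, giving $NR(f)\cap Y=NR(f|_Y)$, and since $\varphi(f^jx)=(\varphi|_Y)((f|_Y)^jx)$ for $x\in Y$ we have $I_\varphi(f)\cap Y=I_{\varphi|_Y}(f|_Y)$ and $R_\varphi(a)\cap Y=R_{\varphi|_Y}(a)$. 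Applying Theorem \ref{thm-Is} (resp.\ Theorem \ref{thm-It}) to $(Y,f|_Y)$ thus yields, for each index $i$ and each $\alpha\in\mathcal A$, an uncountable $\alpha$-DC1-scrambled set lying in $\{x\in Y:x\text{ satisfies case }(i)\}\cap NR(f|_Y)\cap I_{\varphi|_Y}(f|_Y)$; by the identifications above this set sits inside $\{x\in X:x\text{ satisfies case }(i)\}\cap NR(f)\cap I_\varphi(f)$, which is therefore strongly distributional chaotic. This is (I), and (II) is word-for-word the same with $R_\varphi(a)$ in place of $I_\varphi(f)$ and $a\in Int(L_\varphi^Y)$.

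For (III) and (IV) the one genuinely new ingredient is the behaviour of transitivity in the ambient space. Running the proof of Theorem \ref{thm-lv} inside $Y$ produces, via Lemma \ref{lambda-theta-level} applied to $(Y,f|_Y)$, a compact invariant $\Lambda$ with $\Lambda\subsetneq Y$, and the DC1-scrambled sets supplied by Lemma \ref{lemma-A} lie in $Tran(f|_\Lambda)$. The key point is that $\Lambda\subsetneq Y\subseteq X$ is a \emph{proper} subset of $X$, so any $x\in Tran(f|_\Lambda)$ has $\omega_f(x)=\Lambda\subsetneq X$ and hence $x\notin Tran(f)$, while $x\in\Lambda=\omega_f(x)$ gives $x\in Rec(f)$; therefore $Tran(f|_\Lambda)\subseteq Rec(f)\setminus Tran(f)$. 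Together with the dictionary of the previous paragraph (the case-$(i)$ classification and the irregular/level-set membership are again intrinsic to the orbit, which stays in $\Lambda\subseteq Y$), this places the scrambled sets inside the desired ambient sets, proving (III) and (IV).

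The step I expect to demand the most care is verifying that the auxiliary constructions underlying Theorems \ref{thm-Is}--\ref{thm-lv} genuinely localize to $Y$: concretely, that Lemma \ref{lambda-theta-level} and the choices of the convex measure sets $K_i$ can be performed using only measures in $\mathcal M_f(Y)$ and subsets of $Y$. This is legitimate because each tool invoked there---Lemma \ref{strong-elementary-entropy-dense}, Proposition \ref{prop-IrregularMultianalysis-22222}, Lemmas \ref{lemma-full} and \ref{lemma-ergo}, Lemma \ref{DC1-omega-Lemma}, and the construction of $\Theta$ drawn from \cite[Corollary 4.21, Lemma 7.10]{DT}---relies only on positive expansiveness, mixing, and (exponential) shadowing of the ambient system, all of which $(Y,f|_Y)$ enjoys. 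Once this localization is confirmed the three conclusions transfer verbatim and Theorem \ref{thm-Iu} follows.
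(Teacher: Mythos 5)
Your proposal is correct and follows essentially the same route as the paper: the paper's own proof is the one-line observation that $NR(f|_Y)\subseteq NR(f)$, $I_{\varphi}(f|_Y)\subseteq I_{\varphi}(f)$ and $R_\varphi(a)\cap Y\subseteq R_\varphi(a)$, after which Theorems \ref{thm-Is}, \ref{thm-It} and \ref{thm-lv} applied to $(Y,f|_Y)$ finish the argument. You simply spell out in more detail the dictionary (intrinsic nature of the case-$(i)$ classification, recurrence, and $Tran(f|_\Lambda)\subseteq Rec(f)\setminus Tran(f)$ via $\Lambda\subsetneq Y\subseteq X$) that the paper leaves implicit.
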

\begin{proof}
	Note that $NR(f|_Y)\subseteq NR(f),$ $I_{\varphi}(f|_Y)\subseteq I_{\varphi}(f),$ and $R_\varphi(a)\cap Y\subseteq R_\varphi(a).$ Then by Theorem \ref{thm-Is}, \ref{thm-It} and \ref{thm-lv} we finish the proof.
\end{proof}

\subsection{Symbolic Dynamics}
\subsubsection{Subshifts of Finite Type}
For any finite alphabet $A$, the full symbolic space is the set $A^{\mathbb{Z}}=\{\cdots x_{-1}x_{0}x_{1}\cdots : x_{i}\in A\}$, which is viewed as a compact topological space with the discrete product topology. The set $A^{\mathbb{N^{+}}}=\{x_{1}x_{2}\cdots : x_{i}\in A\}$ is called one side full symbolic space. The shift action on one side full symbolic space is defined by
$$\sigma:\ A^{\mathbb{N^{+}}}\rightarrow A^{\mathbb{N^{+}}},\ \ \ x_{1}x_{2}\cdots\mapsto x_{2}x_{3}\cdots.$$
$(A^{\mathbb{N^{+}}},\sigma)$ forms a dynamical system under the discrete product topology which we called a shift. We equip $A^{\mathbb{N^{+}}}$ with a compatible metric $d$ given by
\begin{equation}
	d(\omega,\gamma)=
	\begin{cases}
		n^{-\min\{k\in\mathbb{N^{+}}:\omega_{k}\neq \gamma_{k}\}},&\omega\neq \gamma,\\
		0,&\omega= \gamma.
	\end{cases}
\end{equation}
where $n=\#A$. A closed subset $X\subseteq A^{\mathbb{N^{+}}}$ or $A^{\mathbb{Z}}$ is called subshift if it is invariant under the shift action $\sigma$. $w\in A^{n}\triangleq \{x_1x_2\cdots x_n:\ x_{i}\in A\}$ is a word of subshift $X$ if there is an $x\in X$ and $k\in \mathbb{N}^{+}$ such that $w=x_kx_{k+1}\cdots x_{k+n-1}$. Here we call $n$ the length of $w$, denoted by $|w|$. The language of a subshift $X$, denoted by $\mathcal L(X)$, is the set of all words of $X$. Denote $\mathcal L_{n}(X)\triangleq \mathcal L(X)\bigcap A^{n}$ all the words of $X$ with length $n$.

Let $B$ be a set of words occurring in $A^{\mathbb{N}^+}$. Then $$\Lambda_{B}:=\{x\in A^{\mathbb{N}^+}\mid\text{no block} \ w\in B\ \text{occurs in}\ x\}$$ is a compact $\sigma$-invariant subset. $(\Lambda_{B},\sigma)$ is a subshift defined by $B$. A subshift $(\Lambda,\sigma)$ is called a (one-side) subshift of finite type if there exists a finite word set $B$ such that $\Lambda_{B}=\Lambda$.
By \cite{Walters2}, a subshift satisfies shadowing property if and only if it is a subshift of finite type. By \cite[Proposition 6.4]{HT} every subshift with shadowing property satisfies exponential shadowing property.
So every  subshift of finite type has exponential shadowing property.
As a subsystem of full shift, it is positively expansive. So we have the following.
\begin{Thm}\label{thm-5}
	The results of Theorem \ref{thm-Is} and \ref{thm-It} hold for every mixing (one-side) subshift of finite type.
\end{Thm}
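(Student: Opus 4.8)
The plan is to observe that Theorem \ref{thm-5} is a direct corollary of Theorems \ref{thm-Is} and \ref{thm-It}: the only task is to check that a mixing one-side subshift of finite type $(\Lambda,\sigma)$ meets their three standing hypotheses---positive expansivity, mixing, and the exponential shadowing property---and then quote those theorems verbatim. Since $I_\varphi(\sigma)\neq\emptyset$ (resp. $Int(L_\varphi)\neq\emptyset$) is carried along as a hypothesis of the respective source theorem, nothing further needs to be arranged on that side.

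First I would verify positive expansivity. Since $\Lambda\subseteq A^{\mathbb{N}^+}$ is a closed $\sigma$-invariant subset of the one-side full shift equipped with the metric $d$ defined above, any two distinct points $\omega\neq\gamma$ first disagree at some coordinate $k$; applying $\sigma^{k-1}$ makes their leading symbols differ, so $d(\sigma^{k-1}\omega,\sigma^{k-1}\gamma)=n^{-1}$. Thus $(\Lambda,\sigma)$ is positively expansive with any expansive constant $e<n^{-1}$. Mixing is part of the hypothesis of Theorem \ref{thm-5}, so this condition is free.

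For the exponential shadowing property I would combine the two cited facts recalled just above. By \cite{Walters2} a subshift has the shadowing property exactly when it is a subshift of finite type, so our $(\Lambda,\sigma)$ has the shadowing property; and by \cite[Proposition 6.4]{HT} every subshift possessing the shadowing property already possesses the exponential shadowing property. Hence $(\Lambda,\sigma)$ enjoys exponential shadowing.

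With positive expansivity, mixing, and exponential shadowing all established, Theorems \ref{thm-Is} and \ref{thm-It} apply to $(\Lambda,\sigma)$ without modification, which is precisely the assertion of Theorem \ref{thm-5}. There is no real obstacle in this argument: the single step carrying genuine content is the upgrade from the shadowing property to the exponential shadowing property, and that is supplied entirely by \cite[Proposition 6.4]{HT}; everything else is routine verification of the hypotheses.
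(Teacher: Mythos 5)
Your proposal is correct and follows exactly the route the paper takes: positive expansivity of a subshift (which you verify a bit more explicitly via the metric), shadowing for subshifts of finite type via \cite{Walters2}, the upgrade to exponential shadowing via \cite[Proposition 6.4]{HT}, and then a direct application of Theorems \ref{thm-Is} and \ref{thm-It}. No differences worth noting.
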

\begin{Rem}
	Theorem \ref{thm-5} is also true for every mixing (two-side) subshift of finite type.
\end{Rem}
\subsubsection{$\beta$-Shifts}
{ Next we present one type of subshift, (one-side) $\beta$-shift, basically referring to \cite{R,Sm,PS}.} 
Let $\beta > 1$ be a real number. We denote by $[x]$ and $\{x\}$ the integer and fractional part of the real number $x$.
Considering the $\beta$-transformation $f_{\beta}:[0,1)\rightarrow [0,1)$ given by $$f_{\beta}(x)=\beta x\ (\mathrm{mod}\ 1).$$
For $\beta \notin \mathbb{N}$, let $b=[\beta]$ and for $\beta \in \mathbb{N}$, let $b=\beta-1$. Then we split the interval $[0,1)$ into $b+1$ partition as below

$$J_0=\left[0,\frac{1}{\beta}\right),\ J_1=\left[\frac{1}{\beta},\frac{2}{\beta}\right), \cdots,\ J_b=\left[\frac{b}{\beta},1\right).$$
For $x\in[0,1)$, let $i(x,\beta)=(i_n(x,\beta))_1^{\infty}$ be the sequence given by $i_n(x,\beta)=j$ when $f_{\beta}^{n-1}(x)\in J_j$. We call $i(x,\beta)$ the greedy $\beta$-expansion of $x$ and we have
$x=\sum_{n=1}^{\infty}i_n(x,\beta)\beta^{-n}.$
We call $(\Sigma_{\beta},\sigma)$ (one-side) $\beta$-shift, where $\sigma$ is the shift map, $\Sigma_{\beta}$ is the closure of $\{i(x,\beta)\}_{x\in [0,1)}$ in $\prod_{i=1}^{\infty}\{0,1,\cdots,b\}$.

From the discussion above, we can define the greedy $\beta$-expansion of 1, denoted by $i(1,\beta)$. Parry showed that the set of sequence with belong to $\Sigma_{\beta}$ can be characterised as
$$\omega \in \Sigma_{\beta} \Leftrightarrow f^k(\omega) \leq i(1,\beta)\ \mathrm{for\ all}\ k\geq 1,$$
where $\leq$ is taken in the lexicographic ordering \cite{P}. By the definition of $\Sigma_{\beta}$ above, $\Sigma_{\beta_1}\subsetneq\Sigma_{\beta_2}$ for $\beta_1<\beta_2$(\cite{P}). 

From \cite{Sm}, $\{\beta\in (1,+\infty):(\Sigma_{\beta},\sigma)\text{ has the shadowing  property} \}$ is dense in $(1,+\infty)$. For every $\beta_1>1$, there exists $1<\beta_2<\beta_1$ such that and $(\Sigma_{\beta_{2}},\sigma)$ satisfies the shadowing property. By \cite[Proposition 6.4]{HT} every subshift with shadowing property satisfies exponential shadowing property.
So $(\Sigma_{\beta_{2}},\sigma)$ has exponential shadowing property. By definition every $\beta$-shift is positively expansive and mixing. So we have the following.
\begin{Thm}\label{thm-6}
	The results of Theorem \ref{thm-Iu} hold for every (one-side) $\beta$-shift.
\end{Thm}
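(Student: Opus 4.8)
The plan is to realise, inside any one--side $\beta$--shift $(\Sigma_\beta,\sigma)$, a subsystem of the form $Y=\Sigma_{\beta_2}$ with $1<\beta_2<\beta$ for which all hypotheses of Theorem \ref{thm-Iu} are met, and then to read off its conclusions. The point that makes $Y$ a legitimate choice is the monotonicity $\Sigma_{\beta_1}\subsetneq\Sigma_{\beta_2}$ for $\beta_1<\beta_2$ recorded above (from \cite{P}): for any $\beta_2<\beta$ the subshift $\Sigma_{\beta_2}$ is automatically a proper compact $\sigma$--invariant subset of $\Sigma_\beta$, so it is an admissible $Y$ in Theorem \ref{thm-Iu}, and the sets $NR(\sigma)$, $I_\varphi(\sigma)$, $R_\varphi(a)$ and $Rec(\sigma)\setminus Tran(\sigma)$ appearing in the conclusions are still computed in the ambient space $\Sigma_\beta$, exactly as Theorem \ref{thm-Iu} is designed to handle.

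First I would verify the three structural hypotheses on $Y$. Every $\beta$--shift is positively expansive (being a subsystem of a full shift) and mixing, so these two properties pass to $(\Sigma_{\beta_2},\sigma)$ for every admissible $\beta_2$. For the shadowing property I invoke that $\{\beta'>1:(\Sigma_{\beta'},\sigma)\text{ has the shadowing property}\}$ is dense in $(1,+\infty)$ \cite{Sm}, and that by \cite[Proposition 6.4]{HT} such a $\Sigma_{\beta'}$ then has the \emph{exponential} shadowing property. Hence for every $\beta>1$ one may select $\beta_2<\beta$ from this dense set, and $Y=\Sigma_{\beta_2}$ will be positively expansive, mixing and exponentially shadowing, as required.

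The delicate point is the remaining hypothesis of Theorem \ref{thm-Iu}: one needs $I_\varphi(\sigma)\cap Y\neq\emptyset$ for conclusions (I),(III) and $a\in Int(L_\varphi^{Y})$ for conclusions (II),(IV). A priori, passing to the smaller system $Y$ could flatten out the irregularity of $\varphi$. This is precisely where the monotonicity of the family $\{\Sigma_{\beta'}\}$ is decisive. Assuming the natural hypothesis that $\varphi$ is irregular on $\Sigma_\beta$, i.e. $Int(L_\varphi)\neq\emptyset$ (respectively, that a level $a\in Int(L_\varphi)$ is fixed), I would argue as follows. Since $\mathcal M_\sigma(\Sigma_{\beta'})$ increases with $\beta'$ and $\bigcup_{\beta'<\beta}\mathcal M_\sigma(\Sigma_{\beta'})$ approximates $\mathcal M_\sigma(\Sigma_\beta)$ weak$^*$, the intervals $L_\varphi^{\Sigma_{\beta'}}$ increase toward $L_\varphi$, so there is some $\beta_*<\beta$ with $Int(L_\varphi^{\Sigma_{\beta_*}})\neq\emptyset$ (respectively with $a\in Int(L_\varphi^{\Sigma_{\beta_*}})$). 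Choosing a shadowing parameter $\beta_2\in(\beta_*,\beta)$ from the dense set above, monotonicity gives $\Sigma_{\beta_*}\subseteq\Sigma_{\beta_2}$, whence $Int(L_\varphi^{\Sigma_{\beta_2}})\neq\emptyset$ (respectively $a\in Int(L_\varphi^{\Sigma_{\beta_2}})$). As $Y=\Sigma_{\beta_2}$ is a mixing subshift of finite type, nonemptiness of the interior of its spectrum yields $I_\varphi(\sigma|_Y)\neq\emptyset$, and $I_\varphi(\sigma|_Y)\subseteq I_\varphi(\sigma)\cap Y$.

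With $Y=\Sigma_{\beta_2}$ chosen in this way, every hypothesis of Theorem \ref{thm-Iu} holds, and conclusions (I)--(IV) transfer verbatim to $(\Sigma_\beta,\sigma)$. I expect the genuine obstacle to be the approximation step of the previous paragraph: one must guarantee that the shadowing constraint, which only confines $\beta_2$ to a dense set, remains compatible with keeping the target value $a$---or the full spread of $\varphi$---inside the interior of the restricted spectrum $L_\varphi^{Y}$. The resolution is that enlarging $\beta_2$ only enlarges $L_\varphi^{\Sigma_{\beta_2}}$, so a shadowing parameter can always be taken just above a parameter that already captures the desired irregularity; the monotonicity of $\{\Sigma_{\beta'}\}$ is thus exactly what reconciles the two requirements.
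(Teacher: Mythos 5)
Your proposal is correct and follows essentially the same route as the paper: choose $\beta_2<\beta$ from the dense set of parameters whose $\beta$-shift has the shadowing property \cite{Sm}, upgrade to exponential shadowing via \cite[Proposition 6.4]{HT}, note that every $\beta$-shift is positively expansive and mixing, and apply Theorem \ref{thm-Iu} with $Y=\Sigma_{\beta_2}\subsetneq\Sigma_\beta$. Your third paragraph, which tries to \emph{arrange} $I_\varphi(\sigma)\cap Y\neq\emptyset$ and $a\in Int(L_\varphi^{Y})$ by letting $\beta_2\nearrow\beta$, goes beyond what the paper does (and rests on an unreferenced weak$^*$ approximation of $\mathcal M_\sigma(\Sigma_\beta)$ by $\bigcup_{\beta'<\beta}\mathcal M_\sigma(\Sigma_{\beta'})$); the paper simply keeps these as the conditional hypotheses of Theorem \ref{thm-Iu}, stated relative to the chosen $Y$, so that extra step is not needed for the statement as intended.
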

\begin{Rem}
	Theorem \ref{thm-6} is also true for every (two-side) $\beta$-shift.
\end{Rem}

\subsection{Homoclinic Classes and Hyperbolic Ergodic Measures}
\subsubsection{Homoclinic Classes}
Let $M$ be  a compact Riemannian manifold $M,$ and $\operatorname{Diff}^{1}(M)$ denote the space of $C^{1}$ diffeomorphisms on $M.$ Given $f\in \operatorname{Diff}^{1}(M),$ we recall that the homoclinic class of a hyperbolic saddle $p$, denoted by $H(p),$ is the closure of the set of hyperbolic saddles $q$ homoclinically related to $p$ (the stable manifold of the orbit of $q$ transversely meets the unstable one of the orbit of $p$ and vice versa). In this subsection we consider homoclinic class $H(p)$ in which there is a  hyperbolic periodic point $q$ such that $q$ is homoclinic related to $p$ and the periods of $p$ and $q$ are coprime.  Since $p_1$ is homoclinic related to $p,$ there is $\Lambda\subseteq H(p)$ such that $(\Lambda,f)$ a transitive locally maximal hyperbolic set  that contains $p$ and $q$. By Theorem \ref{thm-2}, $(\Lambda,f)$ is expansive and has the exponential shadowing property. Next we show that it is mixing.
\begin{Prop}\label{proposition-primeperiodic}
	Suppose that a dynamical system $(X,f)$ is transitive and has the shadowing property. If there exist two periodic points $p_{1}$ and $p_{2}$ shch that $n_{1}$ and $n_{2}$ are coprime where $n_{i}$ is the period of $p_{i}$ for $i=1,2$, then $(X,f)$ is mixing.
\end{Prop}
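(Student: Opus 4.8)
The plan is to fix an arbitrary pair of nonempty open sets $U,V\subseteq X$ and produce, for every sufficiently large $n$, a point of $U$ whose $n$-th iterate lies in $V$; this is precisely topological mixing. First I would choose $u\in U$, $v\in V$ and $\varepsilon>0$ so small that $B(u,\varepsilon)\subseteq U$ and $B(v,\varepsilon)\subseteq V$, and let $\delta>0$ be the shadowing constant associated with $\varepsilon$. Given the shadowing property, it then suffices to build, for all large $n$, a finite $\delta$-pseudo-orbit starting at $u$ and reaching $v$ after $n$ steps (extended to an infinite $\delta$-pseudo-orbit by appending the genuine orbit of $v$): shadowing converts it into a real point $z$ with $z\in B(u,\varepsilon)\subseteq U$ and $f^n(z)\in B(v,\varepsilon)\subseteq V$, up to the harmless index convention of the definition. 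Thus the whole problem reduces to controlling the \emph{lengths} of $\delta$-chains from $u$ to $v$.

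Next I would record that transitivity upgrades to chain transitivity: by the standard argument, uniform continuity of $f$ gives $\eta\in(0,\delta)$ with $d(x,y)<\eta\Rightarrow d(f(x),f(y))<\delta$, and applying the open-set transitivity condition to $B(a,\eta)$ and $B(b,\delta)$ yields $x\in B(a,\eta)$ and $k\geq1$ with $f^k(x)\in B(b,\delta)$, so that $\langle a,f(x),\dots,f^{k-1}(x)\rangle$ is a $\delta$-chain connecting $a$ to $b$. Hence between any two prescribed points there is at least one $\delta$-chain of some fixed length.

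I would then assemble a chain from $u$ to $v$ passing through both periodic orbits. Concatenating a chain $u\to p_1$ of length $l_a$, the genuine period-$n_1$ orbit of $p_1$ traversed $a$ times (an exact, hence $\delta$-, chain of length $a\,n_1$ from $p_1$ to itself), a chain $p_1\to p_2$ of length $l_{12}$, the period-$n_2$ orbit of $p_2$ traversed $b$ times (length $b\,n_2$), and a chain $p_2\to v$ of length $l_b$, produces a $\delta$-chain from $u$ to $v$ of length
\begin{equation*}
L \;=\; B + a\,n_1 + b\,n_2, \qquad B:=l_a+l_{12}+l_b,
\end{equation*}
for every $a,b\in\mathbb{N}$, where $B$ is a fixed constant depending only on $u,v,\delta$.

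Finally I would invoke coprimality. Since $\gcd(n_1,n_2)=1$, the numerical semigroup $\{a\,n_1+b\,n_2:a,b\in\mathbb{N}\}$ contains every integer $m\geq(n_1-1)(n_2-1)$ (the Frobenius/Chicken McNugget bound). Consequently the lengths $L$ above realize every integer $\geq B+(n_1-1)(n_2-1)$, so a $\delta$-chain from $u$ to $v$ of length $n$ exists for all such $n$; shadowing then yields the desired point of $U$ mapped into $V$ by $f^n$, giving mixing. The one genuinely essential ingredient is this coprimality step: it is exactly what rules out the periodic block behaviour (such as an everywhere period-$2$ structure) under which a transitive system fails to mix, while chain transitivity and the length bookkeeping are routine. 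I expect the only care needed is matching the precise index convention of the shadowing definition, which at worst shifts the threshold for $n$ by a bounded amount.
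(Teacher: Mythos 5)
Your proposal is correct and follows essentially the same route as the paper's proof: connect $U$ to the orbit of $p_1$, to the orbit of $p_2$, and then to $V$ via transitivity, pad the length by inserting $a$ copies of the period-$n_1$ orbit and $b$ copies of the period-$n_2$ orbit, use coprimality to realize every sufficiently large total length, and finish with shadowing. The only cosmetic difference is that you invoke the Frobenius bound $(n_1-1)(n_2-1)$ where the paper runs a direct B\'ezout computation to show every $n\geq |m_1|n_1^2$ is a nonnegative combination of $n_1$ and $n_2$.
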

\begin{proof}
	Since $n_{1}$ and $n_{2}$ are coprime, there exist two integers $m_{1}$ and $m_{2}$ such that $m_{1}n_{1}+m_{2}n_{2}=1$. We assume that $m_{2}>0$, then for any $n\geq |m_{1}|n_{1}^{2}$, $n$ is expressed as $n=mn_{1}+l$ where $m\geq |m_{1}|n_{1}$ and $0\leq l<n_{1}$ are integers. Then one has 
	\begin{equation*}
		n=mn_{1}+l=mn_{1}+l(m_{1}n_{1}+m_{2}n_{2})=(m+lm_{1})n_{1}+lm_{2}n_{2}
	\end{equation*}
	and $m+lm_{1}\geq |m_{1}|n_{1}+lm_{1}>0$. So for any $n\geq |m_{1}|n_{1}^{2}$ there exist nonnegative $l_{1}$ and $l_{2}$ such that $n=l_{1}n_{1}+l_{2}n_{2}$.
	
	For any nonempty open sets $U$ and $V$ in $X$, there exist $x_{1}$, $x_{2}$ and $\varepsilon>0$ such that $B(x_{1},2\varepsilon)\subset U$ and $B(x_{2},2\varepsilon)\subset V$. Let $\delta>0$ be provided for the $\varepsilon$ by shadowing property. Let $0<\delta_{0}<\min\{\delta,\varepsilon\}$. Since $(X,f)$ is transitive, there exist $y_{1}\in B(x_{1},\delta_{0})$, $y_{2}\in B(p_{1},\delta_{0})$, $y_{3}\in B(p_{2},\delta_{0})$ and $s_{1}$, $s_{2}$, $s_{3}\in\mathbb{N^{+}}$ such that $f^{s_{1}}(y_{1})\in B(p_{1},\delta_{0})$, $f^{s_{2}}(y_{2})\in B(p_{2},\delta_{0})$ and $f^{s_{3}}(y_{3})\in B(x_{2},\delta_{0})$. For any $n\geq |m_{1}|n_{1}^{2}$ there exist nonnegative $l_{1}$ and $l_{2}$ such that $n=l_{1}n_{1}+l_{2}n_{2}$. For the $n$ we define $\mathfrak{C}$ by
	$$\mathfrak{C}^n=orb(y_1,s_1)orb(p_1,l_1n_1)orb(y_2,s_2)orb(p_2,l_2n_2)orb(y_3,s_3+1).$$
	Then $\mathfrak{C}^n$ is a $\delta$-chain  connecting $x_1$ and $x_2$. By shadowing property there exists $z\in X$ such that $\mathfrak{C}^n$ is $\varepsilon$-traced by $z$. Then we have $z\in B(x_{1},2\varepsilon)\subset U$ and $f^{s_{1}+l_{1}n_{1}+s_{2}+l_{2}n_{2}+s_{3}}(z)\in B(x_{2},2\varepsilon)\subset V$. So $U\cap f^{-t}V\neq \emptyset$ for any $t\geq s_{1}+s_{2}+s_{3}+|m_{1}|n_{1}^{2}$, i.e., $(X,f)$ is mixing.
\end{proof}
Thus $(\Lambda,f)$ is expansive, mixing and has the exponential shadowing property. So we have the following.
\begin{Thm}\label{thm-4}
	For every homoclinic class $H(p)$ in which there is a  hyperbolic periodic point $q$ such that $q$ is homoclinic related to $p$ and the periods of $p$ and $q$ are coprime, the results of Theorem \ref{thm-Iu} hold.
\end{Thm}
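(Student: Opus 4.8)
The plan is to recognize Theorem \ref{thm-4} as a direct instance of the abstract Theorem \ref{thm-Iu}, applied with ambient space $X=M$ and invariant compact subset $Y=\Lambda$, where $\Lambda$ is the transitive locally maximal hyperbolic set singled out in the discussion preceding the statement. The entire task thereby reduces to verifying that the restriction $(\Lambda,f|_\Lambda)$ meets the standing hypotheses of Theorem \ref{thm-Iu}, namely that it is expansive, mixing, and has the exponential shadowing property, so that conclusions (I)--(IV) transfer; the hypotheses $I_\varphi(f)\cap\Lambda\neq\emptyset$ and $a\in Int(L_\varphi^\Lambda)$ are inherited directly from Theorem \ref{thm-Iu}.

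First I would produce the set $\Lambda$. Since $q$ is homoclinically related to $p$, the stable and unstable manifolds of the orbits of $p$ and of $q$ meet transversally in both directions, and by the classical Smale--Birkhoff theory of transverse homoclinic points (equivalently, by applying the shadowing lemma to pseudo-orbits that freely concatenate the two periodic orbits along their heteroclinic connections) one obtains a transitive locally maximal hyperbolic set $\Lambda\subseteq H(p)$ containing both $p$ and $q$. This is precisely the set referenced in the paragraph before the theorem. Invoking Theorem \ref{thm-2}, the restriction $(\Lambda,f|_\Lambda)$, being a system on a transitive locally maximal hyperbolic set, is expansive and enjoys the exponential shadowing property.

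It then remains to upgrade transitivity to topological mixing, and here the coprimality hypothesis is used decisively. The set $\Lambda$ contains the two periodic points $p$ and $q$ whose periods are coprime, while $(\Lambda,f|_\Lambda)$ is transitive and has the shadowing property; hence Proposition \ref{proposition-primeperiodic} applies verbatim and shows that $(\Lambda,f|_\Lambda)$ is mixing. Assembling the three properties, $(\Lambda,f|_\Lambda)$ is expansive, mixing, and has the exponential shadowing property, so every hypothesis of Theorem \ref{thm-Iu} holds with $Y=\Lambda$, and the desired conclusions follow. One technical caveat to record: a hyperbolic set for a diffeomorphism is expansive but not positively expansive, so one formally applies the homeomorphism version of the results, replacing positive expansivity by two-sided expansivity and unilateral pseudo-orbits by bilateral ones, exactly as indicated in the remark at the end of the introduction.

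The step I expect to carry the genuine mathematical content, as opposed to bookkeeping, is the production of the transitive locally maximal hyperbolic set $\Lambda$ containing both $p$ and $q$ out of the homoclinic relation; everything downstream is a matter of citing Theorem \ref{thm-2} and Proposition \ref{proposition-primeperiodic}. In the write-up this hyperbolic-set existence is presumably imported as a standard fact from the theory of homoclinic classes rather than reproved, so the proof itself is brief, with the conceptual weight residing in that existence claim together with the verification that the coprime-period points $p$ and $q$ both lie inside the same basic set.
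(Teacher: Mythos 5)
Your proposal matches the paper's argument exactly: the paper likewise extracts a transitive locally maximal hyperbolic set $\Lambda\subseteq H(p)$ containing both $p$ and $q$ from the homoclinic relation, invokes Theorem \ref{thm-2} for expansiveness and exponential shadowing, applies Proposition \ref{proposition-primeperiodic} with the coprimality of the periods to upgrade transitivity to mixing, and then feeds $Y=\Lambda$ into Theorem \ref{thm-Iu}. Your caveat about expansivity versus positive expansivity is also consistent with the paper's remark at the end of the introduction, so there is nothing to add.
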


\subsubsection{Hyperbolic Ergodic Measures}
Given $f\in \operatorname{Diff}^{1}(M),$ we recall that an ergodic $f$-invariant Borel probability measure is said to be hyperbolic if it has positive and negative but no zero Lyapunov exponents, 
In \cite{Katok}, for any ergodic and non-atomic hyperbolic measure $\mu$ of a $C^{1+\alpha}$ diffeomorphism the author finds a hyperbolic periodic point $p$ such that $S_\mu$ is contained in the closure of the transverse homoclinic points of $p$, i.e., $H(p)$. Following Katok's idea, in \cite{LSVW} for any weakly mixing hyperbolic measure $\mu$ of a $C^{1+\alpha}$ diffeomorphism G. Liao \textit{et al} find a hyperbolic periodic point $p$ and a mixing locally maximal hyperbolic set $\Lambda$ such that $S_\mu\subset H(p)$ and $p\in \Lambda \subset H(p)$.  Thus we can state a similar result as a corollary of Theorem \ref{thm-4} for hyperbolic weakly mixing measures. 
\begin{Cor}
	Let $f:M\rightarrow M$ be a  diffeomorphism on a compact Riemannian manifold. Assume that $f$ is $C^{1+\alpha}$ and there is a weakly mixing hyperbolic measure. Then    the results of Theorem \ref{thm-Iu} hold.
\end{Cor}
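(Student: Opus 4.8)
The plan is to reduce the statement to Theorem \ref{thm-Iu} by exhibiting a single mixing locally maximal hyperbolic set to play the role of the invariant subset $Y$, following the same scheme as the proof of Theorem \ref{thm-4}. The difference from Theorem \ref{thm-4} is only in how the required set is produced: there it comes from a homoclinically related pair of periodic points with coprime periods, whereas here it will be supplied directly by a weakly mixing hyperbolic measure.

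First, given the weakly mixing hyperbolic measure $\mu$ of the $C^{1+\alpha}$ diffeomorphism $f$, I would invoke the result of G.~Liao \emph{et al.} in \cite{LSVW}, which furnishes a hyperbolic periodic point $p$ together with a mixing locally maximal hyperbolic set $\Lambda$ satisfying $S_\mu\subset H(p)$ and $p\in\Lambda\subset H(p)$. The essential point is that, in contrast with Theorem \ref{thm-4}, the mixing of $\Lambda$ need not be extracted from a coprimality-of-periods argument via Proposition \ref{proposition-primeperiodic}: it is already built into the construction of \cite{LSVW}. This is precisely the step where the hypotheses $C^{1+\alpha}$ and weak mixing of $\mu$ are consumed.

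Next, since $\Lambda$ is a mixing locally maximal hyperbolic set, Theorem \ref{thm-2} shows that $(\Lambda,f|_\Lambda)$ is expansive, mixing and has the exponential shadowing property. As $f$ is a diffeomorphism, I would work in the homeomorphism formulation described at the end of the introduction, where expansivity replaces positive expansivity and the shadowing properties are stated for bilateral pseudo-orbits. With these three properties in hand, taking $Y=\Lambda$ in Theorem \ref{thm-Iu} satisfies all the hypotheses imposed there on $Y$, and conclusions (I)--(IV) follow at once.

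The only substantial ingredient is thus the existence of $\Lambda$ supplied by \cite{LSVW}; once $\Lambda$ is available, the remaining verification is a direct application of the already-established Theorems \ref{thm-2} and \ref{thm-Iu}, and no further obstacle arises.
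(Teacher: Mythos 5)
Your proposal is correct and follows essentially the same route as the paper: invoke the result of \cite{LSVW} to obtain a mixing locally maximal hyperbolic set $\Lambda$ associated to the weakly mixing hyperbolic measure, apply Theorem \ref{thm-2} to get that $(\Lambda,f|_\Lambda)$ is expansive, mixing and has the exponential shadowing property, and then take $Y=\Lambda$ in Theorem \ref{thm-Iu}. Your explicit remark that the mixing of $\Lambda$ comes built into the \cite{LSVW} construction (rather than from the coprime-periods argument of Proposition \ref{proposition-primeperiodic}) and your note on the homeomorphism/expansivity formulation are both consistent with what the paper does implicitly.
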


\section*{Acknowlegements}   
The authors are
supported by the National Natural Science Foundation of China (grant No. 12071082, 11790273) and in part by Shanghai Science and Technology Research Program (grant No. 21JC1400700).


\begin{thebibliography}{10}

\itemsep -2pt
\bibitem{AH}  N. Aoki and K. Hiraide, {\it Topological Theory of Dynamical Systems: Recent Advances}, Vol. 52. Elsevier, Amsterdam, 1994.

\bibitem{AAN}  P. Ashwin, P. J. Aston and M. Nicol, {\it On the unfolding of a blowout bifurcation}, Phys. D 111(1-4) (1998), 81-95.
\bibitem{AF}  P. Ashwin and M. Field, {\it Heteroclinic networks in coupled cell systems,}  Arch. Ration. Mech. Anal.
148(2) (1999), 107-143.







\bibitem{BaL}
L. Barreira, {\it Dimension and recurrence in hyperbolic dynamics, Progress in Mathematics}. vol. 272, Birkh$\ddot{a}$auser, 2008.

\bibitem{Bar2011}
L. Barreira, {\it Thermodynamic formalism and applications to dimension theory}, Springer Science $\&$ Business Media, 2011.








\bibitem{BPS} L. Barreira, Y. B. Pesin and J. Schmeling,   {\it On a general concept of multifractality: multifractal spectra for
dimensions,   entropies,   and Lyapunov exponents,} Chaos 7(1) (1997),   27-38.

\bibitem{Barreira-Schmeling2000} L. Barreira and J. Schmeling, {\it Sets of non-typical points have full topological entropy and full Hausdorff dimension}, Israel J. math.  116 (2000),   29-70.






\bibitem{Birkhoff}
G. D. Birkhoff, {\it Dynamical Systems,}
Amer. Math. Soc. Colloq. Publ.  (revised ed.), vol. IX,  Amer. Math. Soc., Providence, RI  (1966).


\bibitem{CT}
A. Chen and X. Tian, {\it Distributional chaos in multifractal analysis, recurrence and transitivity}, Ergod. Th. $\&$ Dynam. Sys. 41(02) (2021), 349-378.


\bibitem{CKS}
E. Chen, T. Kupper and L. Shu,  {\it Topological entropy for divergence points}, Ergod. Th. \& Dynam. Sys. 25(4) (2005), 1173-1208.







\bibitem{Dani1985}
S. G. Dani, {\it Divergent trajectories of flows on homogeneous spaces and diophantine approximation}, J. Reine Angew. Math. 359 (1985), 55-89.




\bibitem{Dani1986}
S. G. Dani, {\it Bounded orbits of flows on homogeneous spaces}, Comment. Math. Helv. 61(1) (1986), 636-660. 

\bibitem{Sig}
M. Denker, C. Grillenberger and K. Sigmund, {\it Ergodic Theory on Compact Spaces}, (Lecture Notes in Mathematics, 527). Springer, Berlin, 1976, p. 177.

\bibitem{Dol-Anosov}
D. Dolgopyat, {\it Bounded orbits of Anosov flows}, Duke Math. J. 87(1) (1997), 87-114. 


\bibitem{DOT} Y. Dong, P. Oprocha and X. Tian, {\it On the irregular points for systems with the shadowing property}, Ergod. Th. \& Dynam. Sys. 38(6) (2018), 2108-2131.


\bibitem{DT} Y. Dong, X. Hou and X. Tian, {\it  Twelve different statistical future of dynamical orbits: empty syndetic center}, Preprint, 2022, arXiv:1701.01910v4.


\bibitem{Dwic}
T. Downarowicz, {\it Positive topological entropy implies chaos DC2}, Proc. Amer. Math. Soc. 142(1) (2012), 137-149.




\bibitem{Fal} K. Falconer, {\it Fractal Geometry: Mathematical Foundations and Applications,} John Wiley \& Sons, Chichester, 1990.





\bibitem{FFW} A. Fan, D. Feng and J. Wu,  {\it Recurrence, dimensions and entropy},   J. Lond. Math. Soc. (2) 64(2001), 229-244.



\bibitem{Fur}
H. Furstenberg, {\it Recurrence in ergodic theory and combinatorial number theory}, Princeton University Press, 1981.



\bibitem{Gottschalk44}
W. H. Gottschalk, {\it Orbit-closure decompositions and almost periodic properties,} Bull. Amer. Math. Soc. 50 (1944) 915-919.

\bibitem{Gottschalk44-2222}
W. H. Gottschalk, {\it Powers of homeomorphisms with almost periodic properties,} Bull. Amer. Math. Soc. 50 (1944), 222-227.

\bibitem{Gottschalk46} W. H. Gottschalk, {\it Almost period points with respect to transformation semi-groups,} Ann. of Math. 47 (4) (1946) 762-766.



\bibitem{Grillenberger} C.   Grillenberger, {\it Constructions of strictly ergodic systems.   I.   Given entropy.  }
Z. Wahrscheinlichkeitstheorie verw. Geb. 25 (1973), 323-334.




\bibitem{HSZ-chain-trans}
M. W. Hirsch. H. L. Smith and X. Q. Zhao, {\it Chain transitivity,   attractivity,   and strong repellors for semidynamical systems, } J. Dynam. Differential Equations, 13 (1) (2001), 107-131.


\bibitem{HTW}
Y. Huang, X. Tian and X. Wang, {\it Transitively-saturated property, Banach recurrence and Lyapunov regularity}, Nonlinearity 32 (7) (2019), 2721-2757.


\bibitem{HT}
X. Hou and X. Tian, {\it  Strongly distributional chaos of irregular orbits that are not uniformly hyperbolic}, 2021, arXiv:2111.06055.


\bibitem{Kal}  V. Y. Kaloshin, {\it An extension of the Artin-Mazur Theorem,}  Ann. Math. 150 (1999), 729-741.

\bibitem{Katok}  A. Katok, {\it Lyapunov exponents, entropy and periodic points of diffeomorphisms}, Publ. Math. Inst. Hautes \'Etudes Sci.  51 (1980), 137-173.

\bibitem{KatHas} A. Katok and B. Hasselblatt, {\it Introduction to the Modern Theory of Dynamical Systems (Encyclopedia of Mathematics and Its Applications, 54)}, Cambrige University Press, Cambrige, 1995.


\bibitem{Kleinbock-Weiss2010}
D. Y. Kleinbock and B. Weiss, {\it Modified Schmidt games and Diophantine approximation with weights,} Adv. Math. 223(4) (2010), 1276-1298. 

\bibitem{Kupka1}
Y. Kupka, {\it Contribution $ \grave{\text{e}} $ la th$\acute{\text{e}}$orie des champs g$\acute{\text{e}}$n$\acute{\text{e}}$riques,}  Contrib. Differential Equations 2 (1963), 457-484.











\bibitem{KS} S. Kiriki and T. Soma, {\it Takens' last problem and existence of non-trivial wandering domains},  Adv. Math. 306 (2017), 524-588.

\bibitem{LSaka2005}
K. Lee and K. Sakai, {\it Various shadowing properties and their equivalence}, Discrete Contin. Dyn. Syst. 13 (2)(2005), 533-540.


\bibitem{LY}
T. Y. Li and J. A. Yorke, {\it Period three implies chaos}, Amer. Math. Monthly 82(10) (1975), 985-992.

\bibitem{LSVW}
G. Liao, W. Sun, E. Vargas and S. Wang, {\it Approximation of Bernoulli measures for non-uniformly hyperbolic systems}, Ergod. Th. \& Dynam. Sys. 40(1) (2020), 233-247.


\bibitem{Mane1}
R. Ma$\tilde{\text{n}}$$\acute{\text{e}}$,  {\it An ergodic closing lemma,}  Ann. of Math. (2) 116 (1982), 503-540.


\bibitem{Mai}
J. H. Mai and W. H. Sun, {\it Almost periodic points and minimal sets in $\omega$-regular spaces}, Topology Appl. 154 (2007) 2873-2879. 



\bibitem{Olsen2002} L. Olsen, {\it Divergence points of deformed empirical measures,} Math. Res. Lett. 9 (2002), 1-13.
	
\bibitem{Olsen2003} L. Olsen, {\it Multifractal analysis of divergence points of deformed measure theoretical Birkhoff averages,} J. Math. Pures Appl. 82 (2003), 1591-1649.
	
\bibitem{Olsen-Winter} L. Olsen and S. Winter, {\it Normal and non-normal points of self-similar sets and divergence points of self-similar measures,} J. Lond. Math. Soc. 67 (2003), 103-122.





\bibitem{P}
W. Parry, {\it On the $\beta$-expansions of real numbers}, Acta Math. Acad. Sci. Hungar. 11(3-4) (1960), 401-416.


\bibitem{Pesin1997}
Y. B. Pesin, {\it Dimension Theory in Dynamical Systems: Contemporary Views and Applications}, University of Chicago Press, Chicago, IL, 1997.


\bibitem{Pesin-Pitskel1984} Y. B. Pesin and B. Pitskel', {\it Topological pressure and the variational principle for noncompact sets}, Funct. Anal. Appl. 18 (1984), 307-318.



\bibitem{PS}
C. E. Pfister and W. G. Sullivan, {\it Large deviations estimates for dynamical systems without the specification property. Application to the $\beta$-shifts}, Nonlinearity 18(1) (2005), 237-261.


\bibitem{PS2}
C. E. Pfister and W. G. Sullivan, {\it On the topological entropy of saturated sets}. Ergod. Th. \& Dynam. Sys. 27(3) (2007), 929-956.



\bibitem{Genric Chaos}
J. Pi$\acute{o}$rek, {\it On the generic chaos in dynamical systems}, Univ. Iagel. Acta Math. 25 (1985), 293-298.





\bibitem{Pugh2}
C. Pugh, {\it The closing lemma,}  Amer. J. Math. 89 (1967) 956-1009.



\bibitem{Pugh1} C. Pugh, {\it An improved closing lemma and a general density theorem,}  Amer. J. Math. 89 (1967), 1010-1021.



\bibitem{R}
A. R$\acute{e}$nyi, {\it Representations for real numbers and their ergodic properties}, Acta Math. Acad. Sci. Hungar. 8(3-4) (1979), 477-493.

\bibitem{Rees} M. Rees, {\it A minimal positive entropy homeomorphism of the 2-torus,} J. London Math. Soc. 23 (1981), 537-550.

\bibitem{Sakai}  K.   Sakai,    {\it Various shadowing properties for positively expansive maps,} Topology Appl.   131(1) (2003),   15-31.

\bibitem{Sm}
J. Schmeling, {\it Symbolic dynamics for $\beta$-shifts and self-normal numbers}, Ergod. Th. \& Dynam. Sys. 17 (1997), 675-694.


\bibitem{Schmidt1966}
W. M. Schmidt, {\it On badly approximable numbers and certain games}, Trans. Amer. Math. Soc. 123(1) (1966), 178-199.


\bibitem{SS}
A. Sklar and J. Sm$\acute{\imath}$tal,  {\it Distributional chaos on compact metric spaces via specfication properties}, J. Math. Anal. Appl. 241(2) (2000), 181-188.

\bibitem{SigSpe}
K. Sigmund, {\it On dynamical systems with the specification property}, Trans. Amer. Math. Soc. 190 (1974), 285-299.

\bibitem{Smale1}
S. Smale, {\it Stable manifolds for differential equations and diffeomorphisms,}  Ann. Scula Norm. Pisa 3 (1963), 97-116.

\bibitem{SS2}
J. Sm$\acute{\imath}$tal and M. $\breve{\text{S}}$tef$\acute{\text{a}}$nkov$\acute{\text{a}}$, {\it Distributional chaos for triangular maps}, Chaos Solitons Fractals 21(5) (2004), 1125-1128.



\bibitem{S Generic Ghaos}
L. Snoha, {\it Generic chaos}, Comment. Math. Univ. Carolin. 31(4) (1990), 793-810.














\bibitem{Takens} F. Takens, {\it Orbits with historic behaviour, or non-existence of averages},  Nonlinearity  21 (2008), 33-36.


\bibitem{TV vp}
F. Takens and E. Verbitskiy, {\it On the variational principle for the topological entropy of certain non-compact sets}, Ergod. Th. \& Dynam. Sys. 23(1) (2003),317-348.



\bibitem{Thompson2008}
D. Thompson, {\it The irregular set for maps with the specification property has full topological pressure}, Dyn. Syst. 25(1) (2010), 25-51.

\bibitem{Thompson2009}
D. Thompson, {\it A variational principle for topological pressure for certain non-compact sets,} J.   Lond.   Math.   Soc.   80 (2009),     585-602.

\bibitem{TDbeta}
D.   Thompson,   {\it Irregular sets,   the $\beta$-transformation and the almost specification property},   Trans.   Amer.   Math.   Soc.     364 (2012),   5395-5414.


\bibitem{Tian2015-2} 
X. Tian, {\it Lyapunov 'non-typical' points of matrix cocycles and topological entropy}, Preprint, 2015, arXiv:1505.04345.

\bibitem{T16}
X. Tian, {\it Different asymptotic behaviour versus same dynamicl complexity: recurrence $\&$ (ir)regularity.},  Adv. Math. 288 (2016), 464-526.



\bibitem{Urbanski non-dense}
M. Urbanski. {\it The Hausdorff dimension of the set of points with nondense orbit under a hyperbolic dynamical system}, Nonlinearity 4(2) (1991), 385-397.



\bibitem{Walters2} P. Walters, {\it On the pseudo-orbit tracing property and its relationship to stability, in: The Structure of Attractors in Dynamical Systems (Fargo, ND, 1977)}, Lecture Notes in Math. 668, Springer, Berlin, 1978, 231-244.

\bibitem{Wu2016-schmidt}
W. Wu, {\it Schmidt games and non-dense forward orbits of certain partially hyperbolic systems}, Ergod. Th. \& Dynam. Sys. (36)5 (2016), 1656-1678.





\bibitem{Zhou93}
Z. Zhou and W. He, {\it Level of the orbit's topological structure and topological semi-conjugacy}, Sci. China Ser. A 38(8) (1995), 897-907.




\end{thebibliography}
\end{document}